\newtheorem{thm}{Theorem}
\newtheorem{conj}[thm]{Conjecture}
\newtheorem{lemma}[thm]{Lemma}
\newtheorem{coro}[thm]{Corollary}
\newtheorem{definition}[thm]{Definition}
\newtheorem{prop}[thm]{Proposition}
\newtheorem{example}[thm]{Example}
\newtheorem{Remark}[thm]{Remark}
\newtheorem{fakethm}{Theorem}
\newtheorem{fakecoro}[fakethm]{Corollary}
\numberwithin{thm}{section} % theorem labeling within sections
\numberwithin{equation}{section} % equation labeling within sections
\numberwithin{figure}{section} % figure labeling within sections
\newcommand*{\R}{\mathbb{R}}
\newcommand*{\C}{\mathbb{C}}
\newcommand*{\Z}{\mathbb{Z}}
\renewcommand*{\S}{\Sigma}
\newcommand*{\He}{\mathcal{H}}
\renewcommand*{\l}{\lambda}
\newcommand*{\mf}{\mathfrak}
\newcommand*{\mc}{\mathcal}
\newcommand*{\bb}{\mathbb}
\DeclareMathOperator{\tr}{tr}
\DeclareMathOperator{\Span}{Span}
\DeclareMathOperator{\id}{id}
\DeclareMathOperator{\Hom}{Hom}
\DeclareMathOperator{\End}{End}
\DeclareMathOperator{\SL}{SL}
\DeclareMathOperator{\GL}{GL}
\DeclareMathOperator{\ram}{ram}
\DeclareMathOperator{\Cyc}{Cyc}
\DeclareMathOperator{\Irr}{Irr}
\title{Topological quantum field theories from Hecke algebras}
\author[1]{Vladimir Fock \thanks{fock@math.unistra.fr}}
\author[1]{Valdo Tatitscheff \thanks{valdo.tatitscheff@normalesup.org}}
\author[2]{Alexander Thomas \thanks{a.thomas@mpim-bonn.mpg.de}}
\affil[1]{\footnotesize{IRMA, UMR 7501, Universit\'e de Strasbourg et CNRS \\ 
		7 rue Ren\'e Descartes 67000 Strasbourg, France}}
\affil[2]{\footnotesize{Max-Planck-Institut für Mathematik Bonn, Vivatsgasse 7, 53111 Bonn, Germany}}
\date{}
\begin{document}
	
	\maketitle

	\begin{abstract}
		We construct two-dimensional non-commutative topological quantum field theories (TQFTs), one for each Hecke algebra corresponding to a finite Coxeter system. These TQFTs associate an invariant to each ciliated surface, which is a Laurent polynomial for punctured surfaces. There is a graphical way to compute the invariant using minimal colored graphs. We give explicit formulas in terms of the Schur elements of the Hecke algebra and prove positivity properties for the invariants when the Coxeter group is of classical type, or one of the exceptional types $H_3$, $E_6$ and $E_7$.
	\end{abstract}

	\section{Introduction}
	
	Iwahori--Hecke algebras (referred to as \textit{Hecke algebras} in the sequel) are remarkable associative non-commutative deformations of Coxeter groups depending on a parameter $q$. We use these Hecke algebras to construct topological invariants of ciliated surfaces. This construction behaves nicely under gluing along the boundary of the surfaces and hence defines a topological quantum field theory (TQFT).
	
	The origin of this construction comes from the study of character varieties of surface groups. The attempt is to generalize Thurston's laminations which can be identified with a basis of the function space of the $\mathrm{SL}_2(\C)$-character variety. It turns out that one is led to consider affine Hecke algebras. The question arose what happens if one uses a finite Hecke algebra. This led to the TQFT and the graphical calculus for Hecke algebras presented in this paper.
	
Two-dimensional TQFTs can be classified by algebraic objects. Closed TQFTs are in bijection with Frobenius algbras, and open-closed TQFTs with so-called Cardy--Frobenius algebras \cite{lauda2008open, alexeevski2007hurwitz}. The Hecke algebra is a particular example of a Cardy--Frobenius algebra. We see the main contribution of our paper in the natural way the TQFT arises, in the diagrammatic calculus and in the positivity properties of the constructed TQFT. 
%A short and abstract viewpoint will appear in \cite{fock2021short}.

	\bigskip
	Let $G$ be a finite-dimensional simple Lie group over $\mathbb{F}_q$, where $q=p^\alpha$ is the power of a prime number. Let $H$ be a Cartan subgroup, $B$ a corresponding Borel subgroup and $W\simeq \mathrm{Norm}(H)/H$ the Weyl group. The Hecke algebra $\mathcal{H}_G^q$ is the algebra of functions on $G$ invariant under left and right shift by $B$, or equivalently the algebra of functions of the double coset $B\backslash G/B$, with product given by the convolution divided by the order of $B$. Since as a set the Weyl group $W$ is in bijection with $B\backslash G/B$, the Hecke algebra $\mathcal{H}_G^q$ is a deformation of the group algebra $\mathbb{C}[W]$. 
	
	The structure constants in the corresponding Hecke algebra $\mathcal{H}_G^q$ are related to the counting of $\mathbb{F}_q$-points of flag varieties associated to the corresponding algebraic group (see e.g. \cite{curtis1988representations}). In a judiciously chosen basis the (modified) structure constants of $\mathcal{H}_G^q$ are Laurent polynomial in $q$ invariant under cyclic permutations of their indices. Subsequently, one can associate such a structure constant to an oriented topological triangle with oriented sides labeled with the corresponding basis elements of $\mathcal{H}_G^q$, without the need to fix an initial edge. This construction extends naturally to the Hecke algebras corresponding to finite Coxeter systems $(W,S)$ which are not Weyl groups.
	
	Now one can consider gluing two triangles along one of their edges if its orientation and labels coincide. Summing over all possible labels for the edge along which the gluing is performed, that is over all basis vectors of the Hecke algebra, yields a Laurent polynomial that only depends on the labels on the exterior edges. Repeating this procedure one obtains a way to associate a Laurent polynomial $P \in \Z[v^{\pm 1}]$ to any triangulated ciliated surface $\S$ whose boundary is labeled by elements of the Weyl group $W$:
	
	\begin{equation}
	    P_{\S,W}(v) = \sum \prod_f c_f(v) ~,
	\end{equation}
	where the sum runs over all labelings of the inner edges of the triangulation by elements of $W$, the product runs over all faces $f$, and $c_f(v) \in \Z[v^{\pm 1}]$ denotes the structure constant associated to a triangle $f$ (see \Cref{Sec:invpuncsur} for the precise definition).
	
	The definition of the polynomial $P$ as a state sum uses a triangulation of the surface. Associativity in the Hecke algebra gives (see \Cref{Thm:trianginv}):
	
	\begin{fakethm}
		The polynomial invariant is independent of the triangulation. Hence it is a topological invariant of the ciliated surface. 
	\end{fakethm}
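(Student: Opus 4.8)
The plan is to show invariance of the state sum under the elementary moves that relate any two triangulations of the same ciliated surface. Since any two triangulations of a fixed surface (with fixed boundary marking) are connected by a finite sequence of diagonal flips (Pachner $2\leftrightarrow 2$ moves in dimension two), it suffices to prove that the state sum is unchanged by a single flip. This reduces the topological statement to a purely algebraic identity living on the quadrilateral formed by the two triangles adjacent to the flipped edge.

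\medskip

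Concretely, I would fix a quadrilateral with exterior edges labeled by fixed elements $a,b,c,d \in W$ and compare the two ways of cutting it into two triangles by an interior diagonal. In the first triangulation the interior edge is labeled by some $x \in W$ and the contribution is
\begin{equation}
\sum_{x \in W} c(a,b,x)\, c(\bar x, c, d)~,
\end{equation}
while in the second triangulation the diagonal is labeled by $y \in W$ and the contribution is
\begin{equation}
\sum_{y \in W} c(b,c,y)\, c(\bar y, d, a)~,
\end{equation}
where $\bar x$ denotes the element dual to $x$ in the pairing used to glue along an edge, and $c(\cdot,\cdot,\cdot)$ is the cyclically-invariant structure constant associated to an oriented triangle. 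The key point is that each of these two sums is precisely the structure constant of the \emph{triple} product in $\mathcal{H}_G^q$ read off from the quadrilateral: summing over the interior label is exactly the convolution/composition of two structure-constant tensors along a contracted index, so each double sum computes one and the same object, namely the coefficient of a basis element in a threefold product of Hecke-algebra elements. Associativity $(e_a e_b)e_c = e_a(e_b e_c)$ in $\mathcal{H}_G^q$ is what guarantees that the two bracketings—corresponding to the two diagonals—agree.

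\medskip

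The main obstacle, and the step requiring the most care, is bookkeeping the orientations, the cyclic symmetry of $c$, and the duality $x \mapsto \bar x$ so that contracting an interior edge genuinely corresponds to multiplication in the Hecke algebra rather than to some twisted variant. I would first establish a clean dictionary: an oriented edge labeled $w$ corresponds to the basis vector $e_w$, gluing two edges corresponds to pairing $e_w$ against its dual, and the structure constant $c(a,b,c)$ is the coefficient of (the dual of) $e_c$ in the product $e_a e_b$, made cyclically symmetric by the chosen normalization (this is where the "judiciously chosen basis" of the introduction is used). Once this dictionary is in place, the flip invariance is exactly associativity, and the global statement follows by induction along a sequence of flips connecting any two triangulations.

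\medskip

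Finally, I would note one point that must be checked to make the induction legitimate: that flips act transitively on triangulations of a ciliated surface \emph{with the boundary labeling held fixed}, and that no flip is obstructed by degenerate configurations (for instance a flip whose two triangles coincide, or edges of the quadrilateral that are identified in the surface). In such degenerate cases the same associativity identity still applies after identifying the relevant labels, so I expect these to cause no real difficulty, but they should be addressed explicitly rather than swept aside. With transitivity of flips and the per-flip associativity identity, triangulation-independence of $P_{\S,W}(v)$ follows.
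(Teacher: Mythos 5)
Your proposal is correct and follows essentially the same route as the paper: reduce to invariance under a single diagonal flip, and identify the two quadrilateral state sums $\sum_{w} c_{xyw}\, c_{w^{-1}zv} = \sum_{t} c_{yzt}\, c_{xt^{-1}v}$ as the two bracketings of a triple product, so that flip invariance is exactly associativity in the Hecke algebra. The paper likewise invokes transitivity of flips on triangulations without further comment, so your additional remarks on degenerate configurations are a (welcome) refinement rather than a divergence.
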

	
	In \Cref{Sec:graph-calcul}, we introduce a diagrammatic way to compute the product in the Hecke algebra when $W$ is a Weyl group. From this we get a graphical calculus of our invariants using graphs with edges labeled by simple reflections in $W$. These graphs are called higher laminations. They emerge from the interpretation of the structure constants in the Hecke algebra in terms of configurations of triples of flags. \Cref{Fig:ex-higher-lam} gives two examples of graphs for $W=\mf{S}_3$.
	
	\begin{figure}[h!]
		\centering
		\includegraphics[height=4cm]{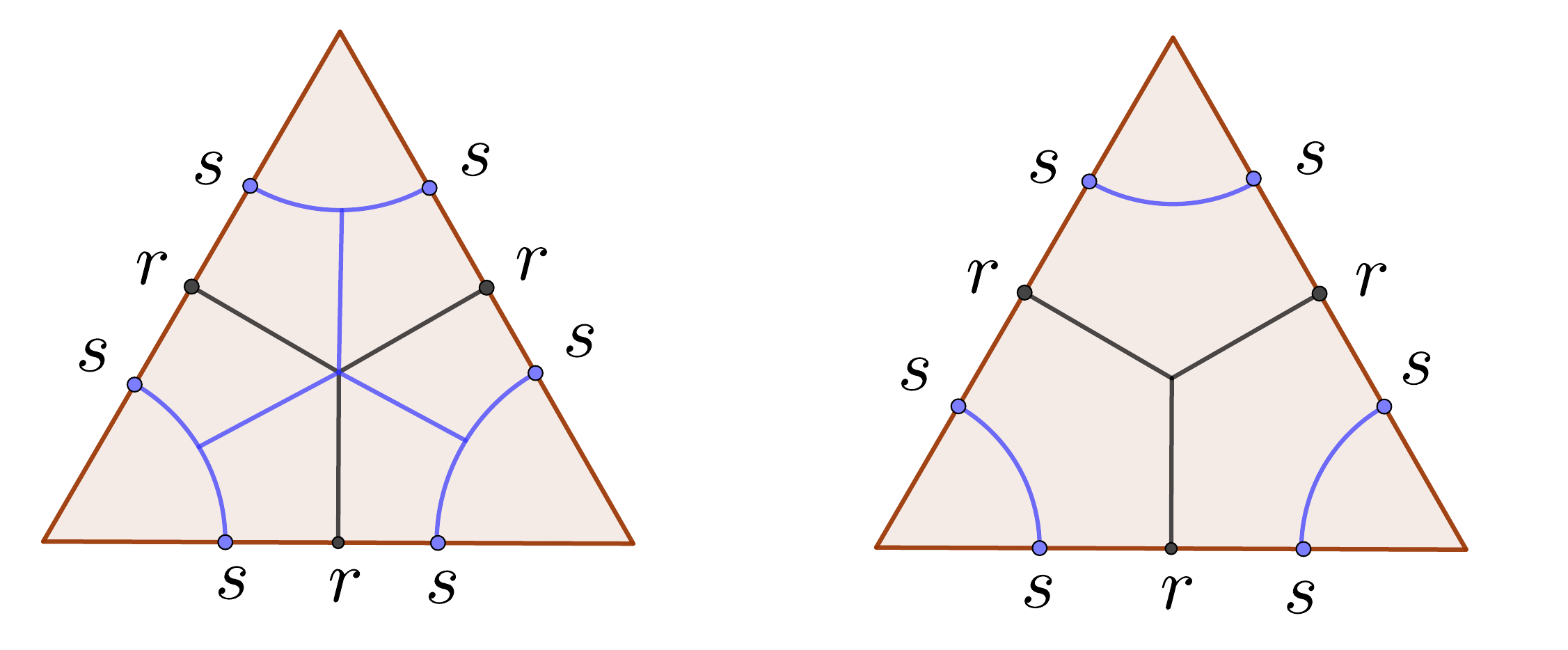}
		
		\caption{Examples of graphs for $W=\mf{S}_3$}\label{Fig:ex-higher-lam}
	\end{figure}
	
	Counting higher laminations with some weight gives a diagrammatic way to compute the polynomial invariant (see \Cref{Thm:poly-via-laminations}):
	\begin{fakethm}
		The polynomial invariant for a ciliated surface $\Sigma$ and a Weyl group $W$ is given by $$P_{\S,W}(Q) = \sum_\Gamma Q^{\ram(\Gamma)}$$
		where the sum runs over all higher laminations $\Gamma$ of type $W$ and $Q^{\ram(\Gamma)}$ is the weight associated to $\Gamma$.
	\end{fakethm}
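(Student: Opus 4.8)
The plan is to promote the local graphical calculus of \Cref{Sec:graph-calcul} to a global statement over the whole surface, starting from the state-sum expression $P_{\S,W}(v) = \sum \prod_f c_f(v)$, which by \Cref{Thm:trianginv} is triangulation independent. The crucial input is the diagrammatic reading of a single structure constant: when $W$ is a Weyl group, the constant $c_f$ attached to a triangle $f$ whose three oriented edges carry labels $w_1,w_2,w_3 \in W$ is itself a sum over the admissible ways of filling the triangle with a local piece of a colored graph compatible with those boundary labels, each filling weighted by $Q$ to the power of its local ramification. I would first make this dictionary fully explicit, writing $c_f$ as such a weighted count of triangle fillings; the cyclic invariance of the structure constants noted in the introduction is exactly what makes this filling well defined without choosing a preferred edge of $f$.

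Substituting this local expansion into the state sum, expanding the product $\prod_f c_f$ turns it into a sum over all choices of one admissible filling in each face, taken simultaneously with the sum over all labelings of the inner edges. The heart of the argument is that a family of local fillings, one per face, agreeing along the inner edges glues into a single global colored graph on $\S$, namely a higher lamination $\Gamma$ of type $W$, and conversely that every higher lamination arises this way by restriction. Under this correspondence the sum over inner-edge labels is precisely the bookkeeping that enforces the matching of the graph across each inner edge, so it is absorbed into the choice of $\Gamma$; and because the global ramification is by definition the sum of the local ramifications over the faces, the product of the local weights collapses to the single monomial $Q^{\ram(\Gamma)}$. This reorganizes $\sum \prod_f c_f$ into $\sum_\Gamma Q^{\ram(\Gamma)}$.

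The main obstacle will be establishing that this local-to-global passage is an exact bijection: that any two fillings agreeing on a shared inner edge patch into a genuine portion of a higher lamination, that every higher lamination decomposes uniquely into such fillings, and that no contribution is double counted or lost. Concretely this means matching the combinatorial compatibility built into the Hecke-algebra product --- the configurations of triples of flags underlying the structure constants --- with the incidence rules that define higher laminations across an edge. Once this bijection and the additivity of $\ram$ over faces are in hand, triangulation independence from \Cref{Thm:trianginv} ensures that $\sum_\Gamma Q^{\ram(\Gamma)}$ is well defined and equal to $P_{\S,W}(Q)$.
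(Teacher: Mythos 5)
Your proposal follows essentially the same route as the paper: expand each structure constant $c_f$ diagrammatically via \Cref{Thm:Hecke-graphique}, interchange the product over faces with the sums over local fillings, and absorb the sum over inner-edge labels into the choice of a global graph, using additivity of $\ram$ over faces. The local-to-global matching you flag as the main obstacle is in fact handled definitionally in the paper, since a higher lamination on $\Sigma$ is \emph{defined} as a gluing of triangle laminations with coinciding boundary data (with representatives fixed as in \Cref{Thm:Hecke-graphique}, the choice being irrelevant assuming \Cref{Conj:complete-relations}).
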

	
	As a consequence we get a symmetry in the invariant that for closed surfaces (see \Cref{Prop:invariance-poly}):
	\begin{fakecoro}
		For a closed surface, the invariant is a polynomial in $q$, where $Q=q^{1/2}-q^{-1/2}$, invariant under the transformation $q\mapsto q^{-1}$.
	\end{fakecoro}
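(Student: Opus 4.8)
The plan is to read the corollary off directly from the lamination formula $P_{\S,W}(Q) = \sum_\Gamma Q^{\ram(\Gamma)}$ of \Cref{Thm:poly-via-laminations}, combined with a parity statement for the ramification. First I would record how the substitution $Q = q^{1/2}-q^{-1/2}$ behaves under $q \mapsto q^{-1}$: since $q^{1/2}\mapsto q^{-1/2}$, we get $Q \mapsto q^{-1/2}-q^{1/2} = -Q$. Hence under $q\mapsto q^{-1}$ the invariant transforms as
$$P_{\S,W} \longmapsto \sum_\Gamma (-Q)^{\ram(\Gamma)} = \sum_\Gamma (-1)^{\ram(\Gamma)}\, Q^{\ram(\Gamma)}~.$$
Both claimed properties — invariance under $q\mapsto q^{-1}$ and the absence of half-integer powers of $q$ — then follow at once from the single assertion that $\ram(\Gamma)$ is even for every higher lamination $\Gamma$ on a closed surface.

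The core of the proof is therefore this parity statement, and it is where I expect the main difficulty to lie. The ramification $\ram(\Gamma)$ is a local count attached to the vertices of the colored graph $\Gamma$, and its parity should be pinned down by a global topological constraint that is only available when $\S$ has no boundary. Concretely, I would interpret $\ram(\Gamma)$ as the total ramification of the branched cover of $\S$ determined by $\Gamma$ (the cover arising from the flag configurations underlying the laminations) and invoke Riemann--Hurwitz: for a degree-$n$ cover of a closed surface of genus $g$ by a surface of genus $\tilde g$ one has $\ram = (2\tilde g - 2) - n(2g-2)$, which is manifestly even. Absent a ready-made cover, the same conclusion can be reached by a direct Euler-characteristic / handshake argument: summing the local ramification contributions over all vertices of $\Gamma$ and reducing modulo $2$, the odd part is a sum over boundary cilia, which vanishes for a closed surface. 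I would make sure this reduction uses only the combinatorial data already fixed by \Cref{Thm:poly-via-laminations}.

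With evenness in hand the two conclusions are immediate. Since $(-1)^{\ram(\Gamma)} = 1$ for all $\Gamma$, the displayed transformation shows $P_{\S,W}$ is fixed by $q\mapsto q^{-1}$. Moreover, writing $Q^{\ram(\Gamma)} = (Q^2)^{\ram(\Gamma)/2}$ with $Q^2 = q - 2 + q^{-1} \in \Z[q^{\pm 1}]$, each summand — and hence $P_{\S,W}$ — is a genuine (Laurent) polynomial in $q$ with integer coefficients, symmetric under $q\mapsto q^{-1}$. This is exactly the asserted statement. The only nontrivial input is the parity lemma; everything else is a formal consequence of the weight being a monomial in $Q$.
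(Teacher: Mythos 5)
Your argument matches the paper's: \Cref{Prop:invariance-poly} derives both claims from the lamination formula of \Cref{Thm:poly-via-laminations} together with the observation that on a closed surface the trivalent ramification points are the only odd-degree vertices of $\Gamma$, so their number is even (handshake lemma), whence the invariant is a polynomial in $Q^2=q-2+q^{-1}$. The only caveat is that your first suggested route via Riemann--Hurwitz presupposes the branched cover of \Cref{Rem:ramified-cover}, which the paper constructs only in type $A$; your fallback handshake argument is the one that works for all Weyl groups and is exactly the paper's proof.
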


	In \Cref{Sec:TQFT}, we extend our construction to ciliated surfaces whose boundary is labeled by elements of the Hecke algebra. Our main result is then the following (see \Cref{Thm:Hecke-TQFT}):
	
	\begin{fakethm}
		Our construction satisfies the axioms of a $2$-dimensional\footnote{In the notation of Atiyah, we construct a one-dimensional TQFT. In modern language we speak about a $d$-dimensional TQFT for $d$ being the dimension of the bordisms, so 2 dimensions in our case.} TQFT as defined in \cite{atiyah1988topological}.
	\end{fakethm}
	
	Hence we construct a family of TQFTs, one for each finite Coxeter system. The interesting feature of these TQFTs is that they are non-commutative. This ultimately comes from the fact that the cobordisms we consider are ciliated surfaces, whose boundary is a disjoint union of segments joining adjacent cilia. 
	
	Using the gluing property of a TQFT, we can decompose ciliated surfaces into elementary parts (marked tori and marked cylinders). The center of the Hecke algebra plays a prominant role. Since the structure of the latter is well understood we can derive an explicit expression for the invariant in terms of the Schur elements of the Hecke algebra (see \Cref{Thm:explexpr}):
	
	\begin{fakethm}
	For a ciliated surface $\Sigma$ of genus $g$ with $k$ marked points and $n$ boundary components equipped with labels $(h_1,...,h_n) \in \He^n$, the polynomial invariant is:
		\begin{equation}
		P_{\S,W}(q) = \sum_\l (\dim V_\l)^{k} s_\l(q)^{2g-2+k+n}\chi_\l(h_1)\cdots \chi_\l(h_n)~
		\end{equation}
		where the sum is taken over all irreducible representations $V_\l$ of the Hecke algebra and where $s_\l$ are the corresponding Schur elements.
	\end{fakethm}

	From these explicit expressions and a thorough analysis of the Schur elements, carried out in \Cref{Appendix:Schur-positive}, we can derive positivity properties of the invariant (\Cref{Thm:positivity} and \Cref{Coro:A-positive}):
	\begin{fakethm}
		The polynomial invariant has positive coefficients for all Coxeter groups of classical type and in the exceptional types $H_3, E_6$ and $E_7$. For all other types, the polynomial can have negative coefficients.
		
		In type $A$ and for boundary labels with positive coefficients in the Kazhdan--Lusztig basis, the polynomial invariant has positive coefficients.
	\end{fakethm}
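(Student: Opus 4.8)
The plan is to base everything on the closed formula of \Cref{Thm:explexpr}, namely $P_{g,k,W}(q) = \sum_\l (\dim V_\l)^k\, s_\l(q)^{2g-2+k}$. Set $n = 2g-2+k$, and note that for every surface considered $n$ is a positive integer and each $\dim V_\l$ is a positive integer. Call a Laurent polynomial \emph{positive} if all of its coefficients are non-negative; products and sums of positive Laurent polynomials are again positive. This gives the sufficient condition on which the positive half of the statement rests: if every Schur element $s_\l(q)$ is positive, then $(\dim V_\l)^k\, s_\l(q)^{n}$ is positive for each $\l$, and hence so is $P_{g,k,W}$. Thus the first task reduces to proving that all Schur elements are positive in the listed types.

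\textbf{Positivity in the good types.} First I would treat the classical types, where the Schur elements have explicit product formulas indexed by (bi)partitions and hooks: each factor is a quantum integer $1+q+\cdots+q^{h-1}$ or a factor $q^a+1$, all positive, so every $s_\l$ has non-negative coefficients. In type $A$ one has $s_\l(q)=q^{a_\l}\prod_{x\in\l}(1+q+\cdots+q^{h(x)-1})$, which settles that case immediately. For the exceptional types $H_3$, $E_6$ and $E_7$ the family of Schur elements is finite and explicitly tabulated, and the verification amounts to checking case by case that, although a single $s_\l$ is a product of cyclotomic polynomials $\Phi_d(q)$ some of which (such as $\Phi_{12}$) individually carry negative coefficients, these factors always combine into genuinely positive products. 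This grouping is exactly the content of \Cref{Appendix:Schur-positive}, and it is here that the main obstacle lies: the difference between the positive type $E_6$ and the non-positive type $F_4$ is invisible at the level of which cyclotomic factors occur and surfaces only from a careful analysis of how they combine.

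\textbf{Negativity in the remaining types.} For $G_2$, $F_4$, $H_4$ and $E_8$ the strategy is reversed. I would first exhibit a genuinely non-positive Schur element, for instance in $G_2=I_2(6)$ a two-dimensional representation whose Schur element retains an uncancelled factor $\Phi_6(q)=q^2-q+1$. Since non-positivity of one Schur element does not by itself force a negative coefficient in the sum, I would then produce an explicit surface on which negativity survives: evaluating $P_{g,k,W}(q)$ on the thrice-punctured sphere ($g=0$, $k=3$, $n=1$), where $P=\sum_\l (\dim V_\l)^3\, s_\l(q)$ is a positive combination of Schur elements, one reads off a negative coefficient. This computation is finite and can be carried out directly from the tabulated Schur elements.

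\textbf{Type $A$ with Kazhdan--Lusztig-positive boundary.} For the boundary statement I would use the explicit expression of \Cref{Thm:explexpr-boundary}, in which the boundary labels enter through the irreducible characters $\chi_\l$ of the Hecke algebra weighted by Schur-element factors. Those factors are positive by the type $A$ analysis above, so it remains to control the character contributions of the boundary labels. Expanding each label in the Kazhdan--Lusztig basis with non-negative coefficients, the product of the labels again has non-negative Kazhdan--Lusztig coefficients by the positivity of the structure constants of that basis in type $A$; combining this with the positivity of the relevant character values on Kazhdan--Lusztig basis elements yields the claim. The delicate point, and the step I expect to be the main obstacle for this part, is precisely this last character positivity, which I would establish using the cell structure and the known positivity properties of the Kazhdan--Lusztig basis in type $A$.
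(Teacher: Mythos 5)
Your overall strategy --- reducing positivity to positivity of the Schur elements via \Cref{Thm:explexpr}, exhibiting explicit negative coefficients in the bad types, and combining Schur-element positivity with character positivity on the Kazhdan--Lusztig basis for the boundary statement --- is indeed the paper's strategy for the classical types, for $E_6$ and $E_7$, and for the type $A$ boundary claim. But there are two genuine gaps. The first concerns $H_3$: this case does \emph{not} reduce to positivity of Schur elements, because in $H_3$ two of the Schur elements have negative coefficients. They are of the form $P$ and $q^5P$ for an explicit Laurent polynomial $P$ containing the coefficient $-10\cos(\tfrac{2\pi}{5})<0$; this is exactly why \Cref{Thm:Schur-pos} lists only the classical types, $E_6$ and $E_7$, and omits $H_3$. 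Your claim that in $H_3$ the cyclotomic factors ``always combine into genuinely positive products'' is false, so your sufficient condition does not apply there. The paper's argument for $H_3$ is a cancellation \emph{between different summands} of the formula in \Cref{Thm:explexpr}: the four relevant three-dimensional representations contribute $(1+q^{5l})(P^l+Q^l)$ with $Q$ positive, and one proves that $P^l+Q^l$ is positive for every $l\geq 1$ (direct check for $l\leq 4$; for $l\geq 5$ one checks $P^5,\dots,P^9$ are positive and uses $P^{l+5}=P^5P^l$). Without an argument of this kind your proof of the $H_3$ case fails.

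The second gap is that ``all other types'' includes the infinite dihedral family $I_2(m)$, $m\geq 5$, which you address only through $G_2=I_2(6)$. A finite computation on the thrice-punctured sphere cannot settle infinitely many groups at once; the paper instead estimates the constant term of $P_{0,3,I_2(m)}$ from the closed formulas for the dihedral Schur elements, using a bound on $\sum_j \sin^{-2}(\pi j/m)$ to show it is negative for all large $m$. Moreover the thrice-punctured sphere does not even suffice for $I_2(5)$: there the paper must pass to $g=0$, $k=4$ to exhibit a negative coefficient. Your negativity argument is therefore incomplete as stated. By contrast, your plan for the type $A$ boundary statement is essentially the paper's proof: positivity of the type $A$ Schur elements, stability of $\He_{\geq 0}$ under multiplication via positivity of the Kazhdan--Lusztig structure constants, and positivity of the character values $\chi_\l(b_w)$, which the paper establishes through the cell structure and Neunhöffer's formula $\chi_\l(b_w)=\sum_{x\in\Lambda}\tr b_wb_xb^x$; that part is sound.
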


	\paragraph{Notations.}
	We write \textit{Hecke algebra} for an Iwahori--Hecke algebra for a finite Coxeter system. When we speak about graphs, we always mean fat graphs, i.e. graphs embedded in some surface (planar graphs for example). We say that a polynomial is \textit{positive} if all its coefficients are positive. Furthermore, we use the following notations:
	
	\begin{center} 
		\begin{tabular}{p{0.16\textwidth}p{0.8\textwidth}} 
			$\Sigma_{g,k}$ & ciliated surface (see \Cref{Sec:ciliated-surface}) \\
			$(W, S)$ & Coxeter system with simple reflections $S$ \\
			$\He_{(W,S)}, \He$ & Iwahori--Hecke algebra of $(W,S)$ \\
			$v, q, Q$ & formal parameters in the Hecke algebra linked by \\
			& $q=v^{-2}$ and $Q=q^{1/2}-q^{-1/2}$
		\end{tabular}
	\end{center}

	\paragraph{Acknowledgments.}
	We warmly thank Maria Chlouveraki for her various inputs about Schur elements, Sebastian Manecke for his contribution for the positivity, Malte Lackmann for showing us SageMathCell, Wille Liu and Daniel Tubbenhauer for discussions.
	A. Thomas acknowledges support from the Max-Planck-Institute for Mathematics in Bonn.

	\tableofcontents

	\section{Preliminaries}
	
	We recall the definition of the Hecke algebra of a Coxeter system and introduce its standard and Kazhdan--Lusztig bases following \cite{elias2016soergel,libedinsky2019gentle}, and then set our conventions and notations for ciliated surfaces as in section 2 of \cite{fock2007dual}.

	\subsection{Hecke algebras and their standard basis}\label{Sec:Hecstand}
	
	Let $(W,S)$ be a finite Coxeter system. Given two reflections $s,t\in S$ let $m_{st}\in\mathbb{N}\cup\{ \infty \}$ denote the order of $st$. With this notation\footnote{By definition of a Coxeter system, for all $s\in S$ one has $m_{ss} = 1$.}, $W$ can be presented as:
	\begin{equation}
	W = \left\langle s \in S~\vert~ (st)^{m_{st}} = \id \ \forall s,t \in S \right\rangle \ .
	\end{equation}
	Let $l:W\rightarrow\mathbb{N}_{\geq 0}$ and $\geq$ be respectively the length function and the Bruhat order on $W$.
	
	Let $v$ be an indeterminate. The \textbf{Hecke algebra} $\He_{(W,S)}$ corresponding to $(W,S)$ as defined in \cite{iwahori1964structure} is the associative $\mathbb{Z}[v^{\pm1}]$-algebra with generators $\{h_s\}_{s\in S}$ and relations of two types, the quadratic ones:
	\begin{equation}\label{Eq:Quadrel}
	h_s^2 = (v^{-1}-v)h_s+1
	\end{equation}
	for all $s\in S$, and the braid ones: 
	\begin{equation}\label{Eq:Braidrel}
	h_sh_r \dots = h_rh_s \dots 
	\end{equation}
	for all $s,t \in S$ such that $m_{st} < \infty$, in which case there are $m_{st}$ terms on each side. 
	
	Except when the explicit subscript $(W,S)$ is needed for clarity or in explicit examples, we will drop it in what follows and simply refer to the Hecke algebra corresponding to $(W,S)$ as $\He$.
	
	\begin{Remark}
		An equivalent, but different, way to describe the Hecke algebra is to use generators $(T_s)_{s\in S}$ which satisfy the same braid relations and where the quadratic relation is
		\begin{equation}\label{Eq:defq}
		T_s^2 = (q-1)T_s + q \ .
		\end{equation}
		for another indeterminate $q$.
		Apart from \Cref{Sec:counting}, we always use the the ``normalized'' version of the Hecke algebra where the quadratic relations take the form of \Cref{Eq:Quadrel} with $v=q^{-1/2}$. 
	\end{Remark}
	
	Let $\bar w = s_1 \dots s_k$ be a reduced expression for some $w\in W$ in terms of $s_1,\dots,s_k \in S$, and set:
	\begin{equation}
	h_{\bar w} = h_{s_1}\dots h_{s_k} \ .
	\end{equation}
	By the famous result of Matsumoto \cite{matsumoto1964generateurs} that every reduced expression for $w$ can be obtained from $\bar w$ using braid relations only, the element $h_{\bar w}$ does not depend on the choice of a reduced expression for $w$ and one can define $h_w := h_{\bar w}$. Let also $h_e := 1$. 
	
	\begin{lemma}
		The set $\{h_w\}_{w\in W}$ is a basis of $\He$ as a free $\mathbb{Z}[v^{\pm1}]$-module. It is called the standard basis of $\He$.
	\end{lemma}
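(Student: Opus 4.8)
The plan is to establish two separate facts: that the proposed set $\{h_w\}_{w\in W}$ spans $\He$ over $\mathbb{Z}[v^{\pm1}]$, and that these $|W|$ elements are linearly independent, so that they form a free basis. The spanning part should be the easier of the two, and I would attack it first.

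For spanning, I would let $M$ be the $\mathbb{Z}[v^{\pm1}]$-submodule of $\He$ generated by $\{h_w\}_{w\in W}$ and show it is all of $\He$. Since $\He$ is generated as an algebra by the $h_s$ for $s\in S$, it suffices to show that $M$ is closed under left multiplication by each $h_s$; then $M$ is a left ideal containing $h_e=1$, hence equals $\He$. The key computation is to evaluate $h_s h_w$ for a reduced word $\bar w = s_1\cdots s_k$. There are two cases governed by the length function. If $\ell(sw)>\ell(w)$, then $s\bar w$ is again a reduced expression, so $h_s h_w = h_{sw}\in M$ directly. If $\ell(sw)<\ell(w)$, then one can choose a reduced expression for $w$ beginning with $s$, write $h_w = h_s h_{w'}$ with $\ell(w')=\ell(w)-1$, and apply the quadratic relation \Cref{Eq:Quadrel} to get $h_s h_w = h_s^2 h_{w'} = (v^{-1}-v)h_s h_{w'} + h_{w'} = (v^{-1}-v)h_w + h_{w'}$, which again lies in $M$. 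This is a standard induction on length and should go through cleanly once the well-definedness of $h_w$ (already granted via Matsumoto) is in hand.

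The harder part is linear independence, and this is where I expect the main obstacle. One cannot simply count, since a priori a relation among the $h_w$ could exist; one needs an actual proof that $\He$ has rank at least $|W|$ as a free module. The cleanest route is to exhibit a faithful module of the right size. The standard approach (Bourbaki-style) is to construct an action of $\He$ on the free $\mathbb{Z}[v^{\pm1}]$-module $V$ with basis $\{e_w\}_{w\in W}$, defining operators $\rho(h_s)$ by $\rho(h_s)e_w = e_{sw}$ if $\ell(sw)>\ell(w)$ and $\rho(h_s)e_w = e_{sw} + (v^{-1}-v)e_w$ if $\ell(sw)<\ell(w)$. One then verifies that these operators satisfy the quadratic relation \Cref{Eq:Quadrel} and the braid relations \Cref{Eq:Braidrel}, so that $\rho$ extends to an algebra homomorphism $\He\to\End(V)$; verifying the braid relations is the genuinely technical step, requiring a careful case analysis on the relative lengths of $w$, $sw$, $tw$ within the dihedral subgroup generated by $s,t$. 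Granting this, applying $\rho(h_w)$ to $e_e$ yields $e_w$ plus a $\mathbb{Z}[v^{\pm1}]$-combination of $e_{w'}$ with $\ell(w')<\ell(w)$; hence the matrix expressing $\rho(h_w)e_e$ in terms of the $e_w$ is unitriangular with respect to a length-refining total order, so the $\rho(h_w)e_e$ are independent, forcing the $h_w$ themselves to be independent.

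Combining the two halves gives that $\{h_w\}_{w\in W}$ is a spanning, linearly independent set, hence a free $\mathbb{Z}[v^{\pm1}]$-basis of $\He$. I would remark that the entire argument is the classical one for Iwahori–Hecke algebras and can be cited from \cite{iwahori1964structure} or standard references such as \cite{elias2016soergel,libedinsky2019gentle}; the only nontrivial ingredient specific to this normalization is checking that the operators $\rho(h_s)$ respect the quadratic relation in the form $h_s^2=(v^{-1}-v)h_s+1$, which is a direct one-line verification in each of the two length cases.
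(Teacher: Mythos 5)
Your proposal is correct: the paper states this lemma without proof, deferring to the classical references (\cite{iwahori1964structure} and the surveys it cites), and your argument is exactly the standard one found there — spanning by induction on length via the quadratic relation, and linear independence via a faithful action on the free module $\bigoplus_{w}\mathbb{Z}[v^{\pm1}]e_w$ with unitriangularity of $\rho(h_w)e_e$. The only step you rightly flag as technical, verifying the braid relations for the operators $\rho(h_s)$, is indeed where all the work lies (the slickest known route is to define commuting left and right versions of these operators), but your outline is the intended proof.
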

	
	Let $s \in S$ and $w \in W$. The multiplication in $\He$ can be rewritten as:
	\begin{equation}
	h_sh_w = \left\{ 
	\begin{array}{lcc}
	h_{sw} & \mathrm{if} & w \leq sw \\
	(v^{-1}-v) h_w + h_{sw} & \mathrm{if} & sw \leq w
	\end{array}
	\right. \ .
	\end{equation}
	
	\begin{example}
		The Coxeter system of type $A_1$ is $(\mathfrak{S}_2,\{s\})$, where $\mathfrak{S}_2$ is the group of permutations of a set of two elements, and $s$ is its generating involution. The corresponding Hecke algebra $\He_{(\mathfrak{S}_2,\{s\})}$ has basis $(h_e,h_s)$ as $\mathbb{Z}[v^{\pm1}]$-module, and the relations $h_sh_e=h_eh_s$ and $h_s^2 = (v^{-1}-v)h_s+1$ hold.
	\end{example}
	
	\begin{example}
		The Coxeter system of type $A_2$ is $(\mathfrak{S}_3,\{s,t\})$, where $s$ and $t$ are two transpositions generating $\mathfrak{S}_3$. The standard basis of the Hecke algebra $\He_{(\mathfrak{S}_3,\{s,t\})}$ as a $\mathbb{Z}[v^{\pm1}]$-module is $(h_e, h_s, h_t, h_{st}, h_{ts}, h_{sts})$. However as a $\mathbb{Z}[v^{\pm1}]$-algebra, $\He_{(\mathfrak{S}_3,\{s,t\})}$ is generated by $h_e, h_s$ and $h_t$ only, where $h_e$ is in the center and with the quadratic relations of \Cref{Eq:Quadrel} for $h_s$ and $h_t$, and the braid relation:
		\begin{equation}
		h_sh_th_s = h_th_sh_t \ .
		\end{equation}
	\end{example}
	
	\begin{example}
		The Coxeter system of type $B_2$ is $(\mathbb{D}_4,\{s,t\})$, where $s$ and $t$ are two transpositions which generate the dihedral group $\mathbb{D}_4$. The standard basis of the corresponding Hecke algebra $\He_{(\mathbb{D}_4,\{s,t\})}$ is $(h_e, h_s, h_t, h_{st}, h_{ts}, h_{sts}, h_{tst}, h_{stst})$ as a $\mathbb{Z}[v^{\pm1}]$-module. The multiplication is such that $h_e$ commutes with $h_s$ and $h_t$, both $h_s$ and $h_t$ satisfy the quadratic relation of \Cref{Eq:Quadrel}, and there is the following braid relation:
		\begin{equation}
		h_sh_th_sh_t = h_th_sh_th_s \ .
		\end{equation}
	\end{example}

	\subsection{The Kazhdan--Lusztig basis}

	Let $s\in S$. One can check easily that the inverse of $h_s$ is $h_s+v-v^{-1}\in\He$.  Since the set of all $h_s$ for $s\in S$ generates $\He$ as a $\mathbb{Z}[v^{\pm1}]$-algebra, it follows that for every $w\in W$, the corresponding $h_w$ also admits an inverse. 
	
	The morphism of $\mathbb{Z}$-modules defined by
	\begin{equation}
	\iota: \left \{
	\begin{array}{rcl}
	\He & \rightarrow & \He \\
	v & \mapsto & v^{-1} \\
	h_w & \mapsto & (h_{w^{-1}})^{-1}
	\end{array}\right.
	\end{equation} 
	is a ring automorphism of $\He$. A cornerstone of Kazhdan--Lusztig theory is the following theorem (see \cite[Theorem 1.1]{kazhdan1979representations}):
	\begin{thm}[Kazhdan--Lusztig]
		For all $w\in W$ there exists a unique $\iota$-self-dual element of $\He$ of the form
		\begin{equation}\label{Eq:formKL}
		b_w = h_w + \sum_{z \leq w} h_{z,w} h_z 
		\end{equation}
		where $h_{z,w} \in v \mathbb{Z}[v]$. Moreover the set $\{b_w\}_{w\in W}$ is a basis of $\He$ as $\mathbb{Z}[v^{\pm 1}]$-module. 
	\end{thm}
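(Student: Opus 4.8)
The plan is to prove existence and uniqueness together by induction on the length $l(w)$, the key structural input being that $\iota$ is unitriangular with respect to the Bruhat order. I would establish that fact first. Since the quadratic relation gives $h_s^{-1}=h_s+v-v^{-1}$, one has $\iota(h_s)=h_s+(v-v^{-1})$; because $\iota$ is a ring automorphism, writing $h_w=h_{s_1}\cdots h_{s_k}$ for a reduced expression and expanding with the multiplication rule shows, by induction on $l(w)$, that
\[
\iota(h_w)=h_w+\sum_{y<w} r_{y,w}\,h_y,\qquad r_{y,w}\in\Z[v^{\pm1}].
\]
In particular the coefficient of $h_w$ in $\iota(h_w)$ is $1$ and no $h_y$ with $y\not\leq w$ occurs, which is what drives everything below.

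For \emph{uniqueness}, suppose $b_w$ and $b_w'$ both have the prescribed form. Their difference $d=b_w-b_w'$ is $\iota$-self-dual, the coefficients of $h_w$ cancel, and $d=\sum_{z<w}e_z h_z$ with $e_z\in v\Z[v]$. If $d\neq 0$, let $z_0$ be maximal in the Bruhat order with $e_{z_0}\neq 0$. By unitriangularity the coefficient of $h_{z_0}$ in $\iota(d)$ is exactly $\iota(e_{z_0})$, so self-duality forces $\iota(e_{z_0})=e_{z_0}$. But $e_{z_0}\in v\Z[v]$ involves only positive powers of $v$ while $\iota(e_{z_0})\in v^{-1}\Z[v^{-1}]$ involves only negative powers, whence $e_{z_0}=0$, a contradiction. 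Therefore $d=0$ and $b_w$ is unique once it exists.

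For \emph{existence} I would induct on $l(w)$, with base case $b_e=h_e=1$, which is self-dual. For the inductive step, choose $s\in S$ with $sw<w$, so that $h_sh_{sw}=h_w$, and set $b_s:=h_s+v$, which is self-dual because $\iota(h_s)=h_s+v-v^{-1}$. As $\iota$ is a ring automorphism and $b_{sw}$ is self-dual by induction, the product $c:=b_s\,b_{sw}$ is self-dual, and a standard computation gives $c=h_w+\sum_{z<w}\gamma_z h_z$ with $\gamma_z\in\Z[v^{\pm1}]$. This $c$ is the natural candidate, but its coefficients need not lie in $v\Z[v]$. I would correct them using the self-dual elements already constructed: processing the indices $z<w$ in decreasing Bruhat order, at the maximal $z$ with $\gamma_z\notin v\Z[v]$ I subtract $\mu_z b_z$ for a suitable $\iota$-invariant $\mu_z\in\Z[v+v^{-1}]$ chosen so the new $h_z$-coefficient lands in $v\Z[v]$; since $b_z$ is self-dual and $\mu_z$ is $\iota$-invariant, the result stays self-dual, and as $b_z$ is supported on indices $\leq z$ the coefficients already normalized at higher indices are untouched. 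Iterating over all $z<w$ yields an element $b_w$ of the required form.

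The basis statement then follows formally: the relation $b_w=h_w+\sum_{z<w}h_{z,w}h_z$ exhibits the transition matrix from $\{b_w\}$ to $\{h_w\}$ as unitriangular with respect to any linear refinement of the Bruhat order, hence invertible over $\Z[v^{\pm1}]$, so $\{b_w\}_{w\in W}$ is a $\Z[v^{\pm1}]$-basis of $\He$. The main obstacle I anticipate is the correction step in the existence proof: one must show that, once the higher coefficients are normalized, the self-duality of $c$ forces the obstruction in each $\gamma_z$ to be an $\iota$-invariant Laurent polynomial, so that it can be cancelled by an $\iota$-invariant multiple of $b_z$ without destroying self-duality. Equivalently, this is the degree bound $h_{z,w}\in v\Z[v]$ on the resulting coefficients, and controlling it cleanly is exactly where the inverse recursion for $\iota(h_w)$ (the $R$-polynomial machinery) does the real work, building on the unitriangularity of $\iota$ established at the outset.
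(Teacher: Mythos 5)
The paper itself does not prove this theorem: it is quoted from Kazhdan--Lusztig's original article (their Theorem 1.1) and used as a black box, so there is no in-paper argument to compare yours against. Your proof is the standard one (essentially Kazhdan--Lusztig's, in the streamlined form popularized by Soergel), and it is correct: unitriangularity of $\iota$ with respect to the Bruhat order, a maximal-bad-index argument for uniqueness using $v\mathbb{Z}[v]\cap v^{-1}\mathbb{Z}[v^{-1}]=0$, and existence by correcting the self-dual candidate $b_s b_{sw}$ from the top down, with the basis statement following from unitriangularity of the transition matrix.

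One remark on the obstacle you flag at the end: it is not actually there. You do not need self-duality of $c$ to make the obstruction in $\gamma_z$ cancellable by an $\iota$-invariant multiple of $b_z$; you only need that every element of $\mathbb{Z}[v^{\pm1}]$ decomposes, as a direct sum of $\mathbb{Z}$-modules, into an $\iota$-invariant element plus an element of $v\mathbb{Z}[v]$. This holds because $\{1\}\cup\{v^i+v^{-i}\}_{i\ge1}\cup\{v^i\}_{i\ge1}$ is a $\mathbb{Z}$-basis of $\mathbb{Z}[v^{\pm1}]$, and the $\mathbb{Z}$-span of the first two families is exactly $\mathbb{Z}[v+v^{-1}]$. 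So at each step you simply subtract $\mu_z b_z$ where $\mu_z$ is the $\iota$-invariant component of $\gamma_z$ in this decomposition; self-duality is preserved automatically, the already-normalized higher coefficients are untouched, and coefficients at lower indices may be spoiled but are handled later in the single decreasing pass through a linear refinement of the Bruhat order, which terminates since $W$ is finite. The $R$-polynomial machinery is only needed for the unitriangularity $\iota(h_w)=h_w+\sum_{y<w}r_{y,w}h_y$, which you establish at the outset; the remaining inputs (the subword characterization of the Bruhat order, ensuring that $b_sb_{sw}$ is supported on $\{y\le w\}$ with $h_w$-coefficient $1$) are routine.
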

	
	The \textit{Kazhdan--Lusztig polynomial} $p_{z,w}$ is defined as:
	\begin{equation}\label{Eq:KLpoly}
	p_{z,w} = v^{l(w)-l(z)} h_{z,w} \ .
	\end{equation}
	
	\begin{example}
		For $s\in S$, the element $b_s = h_s + v \in \He$ is always self-dual under $\iota$ and moreover it is of the form of \Cref{Eq:formKL}. It is clear that $b_e = h_e = 1$. Hence the Kazhdan--Lusztig basis of $\He_{(\mathfrak{S}_2,\{s\})}$ is the pair $(b_e = 1, b_s)$.
	\end{example}

	\paragraph{Positivity.}
	The Kazhdan--Lusztig basis enjoys numerous positivity properties which have been shown to be a consequence of a combinatorial Hodge theory in the category of Soergel bimodules \cite{williamson2016hodge}. For example the Kazhdan--Lusztig polynomials $p_{z,w}$ of \Cref{Eq:KLpoly} are positive (meaning that $p_{z,w}\in\mathbb{Z}_{\geq 0}[v]$), and the structure constants of the Hecke algebra are positive when expressed in the Kazhdan--Lusztig basis: if one sets
	\begin{equation}
	b_xb_y = \sum \tensor{\mu}{_x_y^z} b_z ~ ,
	\end{equation}
	then $\tensor{\mu}{_x_y^z}\in\mathbb{Z}_{\geq 0}[v^{\pm1}]$.

	\subsection{Ciliated surfaces}\label{Sec:ciliated-surface}
	
	A \textbf{ciliated surface} is an oriented topological surface obtained as follows: remove $n$ disjoint open disks labeled $1,\dots,n$ out of the oriented surface of genus $g$ with $k$ punctures, for $g,k,n\geq0$. On the $i$-th boundary circle add $p_i\geq1$ marked points called cilia, for $i=1,\dots,n$. 
	
	Topologically, a ciliated surface is determined by its genus, the integer $k$ and the set $\{p_1, ..., p_n\}$. We will denote such a surface $\Sigma_{g,k,\{p_1,...,p_n\}}$ in general, and simplify the writing to $\Sigma_{g,k}$ when it has no cilium, and to $\Sigma_{g,\{p_1,...,p_n\}}$ when it has no puncture. Let $c=\sum p_i$ be the total number of cilia.
	
	\begin{figure}[h!]
		\centering
		\includegraphics[scale=1.4]{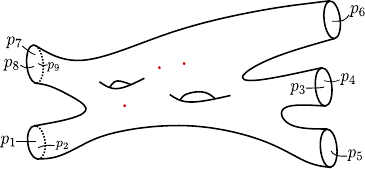}
		\caption{The ciliated surface $\S_{2,3,\{2,2,1,1,3\}}$.}
	\end{figure}
	
	In the sequel we will only consider ciliated surfaces such that $k + n \geq 1$, and such that when $g = 0$ either $k + n \geq 3$ or $k + n = 2$ and $c \geq 1$ or $k + n = 1$ and $c \geq 3$, for such ciliated surfaces can be triangulated. A \textit{triangulation} $\mathsf{T}$ of $\Sigma$ is a decomposition of $\Sigma$ into triangles such that every vertex of a triangle is either a cilium or a puncture. Edges of $\mathsf{T}$ belonging to the boundary of $\Sigma$ are said to be external and the others, internal. Let $\#F(\mathsf{T})$, $\#E(\mathsf{T})$, $\#E_0(\mathsf{T}) = c$ and $\#V(\mathsf{T}) = k + c$ be respectively the number of faces, edges (external and internal), internal edges and vertices of $\mathsf{T}$. The Euler characteristic of the closure $\overline\Sigma$ is
	\begin{equation}\label{Eq:eulerchar}
	\#F(\mathsf{T})-\#E(\mathsf{T})+\#V(\mathsf{T}) = 2-2g+n~,
	\end{equation}
	and since $\mathsf{T}$ is a triangulation:
	\begin{equation}\label{Eq:triangulation}
	3\#F(\mathsf{T}) = 2\#E(\mathsf{T})-\#E_0(\mathsf{T})~.
	\end{equation}
	From \Cref{Eq:eulerchar} and \Cref{Eq:triangulation} one deduces that:
	\begin{equation}
	\left.
	\begin{array}{l}
	\#E(\mathsf{T}) = 6g-6+2c+3(k+n)\\
	\#F(\mathsf{T}) = 4g-4+c+2(k+n)
	\end{array}~.
	\right.
	\end{equation}
	
	Except when $g=0$ and $(k,n)=(0,n)$ or $(k,n)=(1,n)$ the number of triangulations of a ciliated surface is infinite. However one can always reach any triangulation from a reference one in a finite number of \textit{flips} which consist of replacing the diagonal of a quadrilateral formed by two adjacent triangles with the other diagonal.
	
	In what follows we will speak of the \textit{boundary of a ciliated surface} to refer to the disjoint union of the boundary segments connecting two adjacent cilia.

	\section{Definition of the polynomial and first properties}\label{Sec:invpuncsur}
	
	We present our construction which associates Laurent polynomials to ciliated surfaces in a pedestrian way. A more abstract viewpoint comes in \Cref{Sec:TQFT}.
	
	\subsection{On the standard structure constants in Hecke algebras}\label{Sec:struccons}
	
	Let $(W,S)$ be a Coxeter system and $\He$ the corresponding Hecke algebra. In \Cref{Sec:Hecstand} we introduced the standard basis $\{h_w\}_{w\in W}$ of $\He$ as a free $\mathbb{Z}[v^{\pm 1}]$-module. Let $\He^*$ be the free $\mathbb{Z}[v^{\pm 1}]$-module dual to $\He$ with standard dual basis $\{h^w\}_{w\in W}$.
	
	By definition, the structure constants $\tensor{c}{_{xy}^z}$ are given by
	\begin{equation}
	\tensor{c}{_{xy}^z} = h^z(h_x \cdot h_y) \;\in \bb{Z}[v^{\pm 1}] 
	\end{equation}
	for $x,y,z \in W$. Let us set:
	\begin{equation}\label{Eq:metricintui}
	c_{xyz}:=\tensor{c}{_{xy}^{z^{-1}}} \ .
	\end{equation} 
	
	The notation of \Cref{Eq:metricintui} can be understood through the standard trace in the Hecke algebra. A \textbf{trace} on $\He$ is a $\mathbb{Z}[v^{\pm 1}]$-linear map $\tr:\He \rightarrow \mathbb{Z}[v^{\pm 1}]$ such that $\tr(hh^{'})=\tr(h^{'}h)$ for all $h,h^{'}\in\He$. A trace is said to be \textit{symmetrizing} if the map $h' \mapsto \tr(hh')$ is non-degenerate for all $h\neq 0$. The map
	\begin{equation}
	\tr\left(\textstyle\sum_{w\in W} c_w h_w\right) = h^e\left(\textstyle\sum_{w\in W} c_w h_w\right) = c_e
	\end{equation}
	is a symmetrizing trace on $\He$ called the \textbf{standard trace} \cite[Proposition 8.1.1]{geck2000characters}. It is easy to see that the standard trace on $\He$ satisfies: 
	\begin{equation}\label{Eq:trace-hx}
	\tr(h_xh_y) = \delta_{x, y^{-1}}~.
	\end{equation}

	\begin{prop}\label{Prop:cyclicinv}
		For all $x,y,z \in W$, one has
		\begin{equation}
		c_{xyz} = \tr (h_x h_y h_z) \ .
		\end{equation}
		In particular, this implies that $c_{xyz}$ is cyclically symmetric: $c_{xyz} = c_{yzx} = c_{zxy}$.
	\end{prop}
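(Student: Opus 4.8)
The plan is to prove the identity $c_{xyz} = \tr(h_x h_y h_z)$ by a direct computation, expanding the product in the standard basis and applying the trace term by term. First I would write the product of the first two factors in the standard basis using the structure constants already defined in the excerpt, namely $h_x h_y = \sum_{w\in W} \tensor{c}{_{xy}^w} h_w$. Right-multiplying by $h_z$ and using $\mathbb{Z}[v^{\pm 1}]$-linearity of the trace gives
\begin{equation}
\tr(h_x h_y h_z) = \tr\!\left(\sum_{w\in W} \tensor{c}{_{xy}^w} h_w h_z\right) = \sum_{w\in W} \tensor{c}{_{xy}^w}\, \tr(h_w h_z) \ .
\end{equation}
At this point I would invoke \Cref{Eq:trace-hx}, which states $\tr(h_w h_z) = \delta_{w, z^{-1}}$. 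This collapses the sum to the single term $w = z^{-1}$, yielding $\tr(h_x h_y h_z) = \tensor{c}{_{xy}^{z^{-1}}}$, which is exactly $c_{xyz}$ by the definition in \Cref{Eq:metricintui}. This establishes the first claim.

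For the cyclic symmetry I would argue entirely from the defining property of a trace, $\tr(hh') = \tr(h'h)$, rather than from any further combinatorics. Grouping $a = h_x h_y$ and $b = h_z$, the trace property gives $\tr(h_x h_y h_z) = \tr(a b) = \tr(b a) = \tr(h_z h_x h_y)$, and applying the identity just proved to the right-hand side identifies this with $c_{zxy}$. Running the same grouping argument with $a = h_x$ and $b = h_y h_z$ produces $\tr(h_x h_y h_z) = \tr(h_y h_z h_x) = c_{yzx}$. Combining these two equalities yields the full cycle $c_{xyz} = c_{yzx} = c_{zxy}$.

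I expect no serious obstacle here, as both parts reduce to the orthogonality relation \Cref{Eq:trace-hx} and the cyclicity of the trace, both of which are available from the preliminaries. The only point requiring a word of care is that the trace extracts the coefficient of $h_e$ in the standard basis, so that the expansion step above is legitimate; but since the trace is $\mathbb{Z}[v^{\pm 1}]$-linear and the structure constants $\tensor{c}{_{xy}^w}$ are scalars in $\mathbb{Z}[v^{\pm 1}]$, pulling them out of the trace is justified. Conceptually, the proposition is the statement that the standard trace turns the associative product into a cyclically invariant triple pairing, and the proof makes this precise with essentially one line of computation followed by the symmetry argument.
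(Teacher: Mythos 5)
Your proof is correct and follows essentially the same route as the paper's: expand $h_xh_y$ in the standard basis, multiply by $h_z$, and apply the orthogonality relation $\tr(h_xh_y)=\delta_{x,y^{-1}}$ to isolate $c_{xyz}$, with cyclic symmetry then following from the trace property. The only difference is cosmetic (you index the sum by $w$ with $h_w$ rather than by $z'$ with $h_{z'^{-1}}$), and your explicit write-out of the cyclicity argument is a harmless elaboration of what the paper leaves implicit.
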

	
	\begin{proof}
		By definition of the structure constants one has:
		\begin{equation}
		h_xh_y = \sum_{z'} c_{xyz'} h_{z'^{-1}}~.
		\end{equation}
		Multiplying by $h_z$, taking the trace and using \Cref{Eq:trace-hx} yields:
		\begin{equation}
		\tr(h_xh_yh_z) = \sum_{z'}c_{xyz'}\delta_{z,z'} = c_{xyz}~,
		\end{equation}
		which concludes the proof.
	\end{proof}
	
	\subsection{An invariant for surfaces with punctures}\label{Sec:invariant}
	
	\paragraph{Triangles and structure constants.} 
	Consider a triangle with oriented edges, labeled with elements $x,y,z \in W$. Fix also an orientation of the triangle. We associate to the triangle the structure constant $c_{x^ay^bz^c} \in \mathbb{Z}[v^{\pm 1}]$ where $a$ is $1$ (respectively, $-1$) if the orientation of the edge labeled by $x$ is induced by the triangles orientation (respectively, if not), and \textit{mutatis mutandis} for the two other edges. \Cref{Fig:triangles} gives two examples.
	
	\begin{figure}[h!]
		\centering
		\includegraphics[height=4cm]{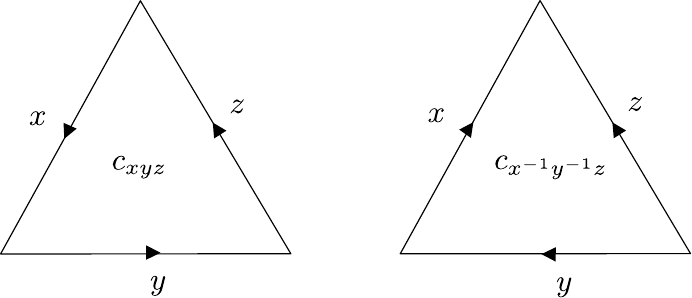}
		\caption{We associate $c_{xyz}$ to the left-most triangle and $c_{x^{-1}y^{-1}z}$ to the right-most one.}\label{Fig:triangles}
	\end{figure} 
	
	Note that by \Cref{Prop:cyclicinv} the quantity associated to the triangle is well-defined. In addition, it does not depend on the chosen orientation of the triangle:
	
	\begin{prop}\label{prop:revert-orientation}
		The structure constants satisfy $c_{xyz}=c_{z^{-1}y^{-1}x^{-1}}$.
		Thus, the associated quantity to a triangle is independent of its orientation.
	\end{prop}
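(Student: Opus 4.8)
The plan is to reduce both sides to traces via \Cref{Prop:cyclicinv} and then exploit an anti-automorphism of $\He$ that reverses words. Concretely, since $c_{xyz} = \tr(h_x h_y h_z)$ and likewise $c_{z^{-1}y^{-1}x^{-1}} = \tr(h_{z^{-1}} h_{y^{-1}} h_{x^{-1}})$, it suffices to produce a $\Z[v^{\pm 1}]$-linear anti-automorphism $\tau \colon \He \to \He$ fixing the standard trace and satisfying $\tau(h_w) = h_{w^{-1}}$ for all $w \in W$. I would first introduce $\tau$ as the $\Z[v^{\pm 1}]$-linear map determined on generators by $\tau(h_s) = h_s$ for all $s \in S$, extended as an anti-homomorphism, i.e. $\tau(ab) = \tau(b)\tau(a)$.

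The first step is to check that $\tau$ is well-defined, which amounts to verifying that it respects the defining relations \eqref{Eq:Quadrel} and \eqref{Eq:Braidrel}. The quadratic relation $h_s^2 = (v^{-1}-v)h_s + 1$ is symmetric in its single generator, so it is preserved. For a braid relation $h_s h_r \cdots = h_r h_s \cdots$ with $m_{sr}$ factors on each side, applying the order reversal built into $\tau$ turns each side into the same word read backwards, which is again an equality of the same braid type; hence $\tau$ descends to an anti-endomorphism, and since it is clearly involutive it is an anti-automorphism. The second step is to compute $\tau$ on the standard basis: for a reduced expression $\bar w = s_1 \cdots s_k$ one has $h_w = h_{s_1}\cdots h_{s_k}$, so $\tau(h_w) = h_{s_k}\cdots h_{s_1} = h_{w^{-1}}$, using that $s_k \cdots s_1$ is a reduced expression for $w^{-1}$ together with the reduced-word independence established via Matsumoto's theorem.

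The third step is trace invariance. Writing $h = \sum_{w} c_w h_w$, the standard trace extracts $c_e$, while $\tau(h) = \sum_w c_w h_{w^{-1}}$ has coefficient of $h_e = h_{e^{-1}}$ equal to $c_e$; hence $\tr(\tau(h)) = \tr(h)$. Combining these, $c_{z^{-1}y^{-1}x^{-1}} = \tr(h_{z^{-1}} h_{y^{-1}} h_{x^{-1}}) = \tr\big(\tau(h_z)\tau(h_y)\tau(h_x)\big) = \tr\big(\tau(h_x h_y h_z)\big) = \tr(h_x h_y h_z) = c_{xyz}$, which is the desired identity. Finally, for the geometric assertion I would note that reversing the orientation of the triangle reverses the cyclic order in which the edges are read and simultaneously flips each edge's orientation relative to the triangle, so the triangle with counterclockwise labels $x,y,z$ is read, in the reversed orientation, as $z^{-1}, y^{-1}, x^{-1}$; the identity just proven then shows the associated quantity is unchanged.

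The only genuinely delicate point is the well-definedness of $\tau$, and within that the verification that the braid relations are stable under word reversal; everything else is a short formal manipulation. I expect this braid-relation check to be the main obstacle, though it is essentially immediate once one observes that reversing a palindromic-length alternating word of a fixed braid type yields the alternating word of the same type on the opposite side.
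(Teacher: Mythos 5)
Your proof is correct and takes essentially the same route as the paper: both arguments hinge on the anti-involution of $\He$ sending $h_w \mapsto h_{w^{-1}}$, combined with the cyclicity/trace identity of \Cref{Prop:cyclicinv}. The only cosmetic differences are that you define the anti-involution on the generators $h_s$ and verify the defining relations (arguably a cleaner justification of well-definedness than the paper's check on the standard basis) and that you conclude via invariance of the standard trace rather than by reading off coefficients from the product expansion $h_xh_y=\sum_z c_{xyz}h_{z^{-1}}$.
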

	\begin{proof}
		Consider the following map $\sigma$ on $\He$, defined on the standard basis by 
		\begin{equation}
		\sigma(h_w) = h_{w^{-1}}
		\end{equation}
		and extended by linearity. We claim that $\sigma$ is an anti-involution of $\He$, i.e. that $\sigma(h_ah_b)=\sigma(h_b)\sigma(h_a) \;\forall a,b\in W$. Indeed it is enough to check this on the basic relations: the quadratic relation is invariant under $\sigma$, and for $w=sw'$ where $s\in S$ and $w'$ is a reduced word, we have 
		\begin{equation}
		\sigma(h_sh_{w'}) = \sigma(h_w) = h_{w^{-1}}=h_{w'^{-1}}h_s=\sigma(h_{w'})\sigma(h_s)~.
		\end{equation}
		By definition, we have 
		\begin{equation}
		h_xh_y=\sum_{z\in W} c_{xyz}h_{z^{-1}}~.
		\end{equation}
		Applying $\sigma$ gives 
		\begin{equation}
		h_{y^{-1}}h_{x^{-1}} = \sum_{z\in W} c_{xyz}h_z~.
		\end{equation}
		Hence $c_{xyz}=c_{y^{-1}x^{-1}z^{-1}}$. We conclude by cyclicity of $c_{xyz}$.
	\end{proof}
	
	\paragraph{Gluing triangles into ciliated surfaces.}
	
	Let $\mathsf{T}$ be a triangulation of the oriented topological surface $\Sigma_{g,k}$ of genus $g$ and with $k \geq 1$ punctures. 
	
	\begin{definition} Let $P_{g,k,W,\mathsf{T}}$ be the Laurent polynomial:
		\begin{equation}\label{Eq:defpolynomial}
		P_{g,k,W,\mathsf{T}}(v) = \sum \prod_{f\in\mathsf{T}} c_f(v) \in \mathbb{Z}[v^{\pm 1}] \ ,
		\end{equation}
		where the sum runs over all possible labelings of the edges of $\mathsf{T}$ by elements of $W$ and the product over all faces $f$ of $\mathsf{T}$, and where $c_f(v)$ is the structure constant associated to the face $f$ of $\mathsf{T}$ as before. 
		
		More generally, we can define such a polynomial for any ciliated surface with triangulation $\mathsf{T}$, as soon as each boundary component is labeled with an element of $W$. The sum in \Cref{Eq:defpolynomial} runs in this case over all possible labelings of internal edges of $\mathsf{T}$.
	\end{definition}
	
	In order to compute $P_{g,k,W,\mathsf{T}}$ one has to choose an orientation for the internal edges of $\mathsf{T}$ so that the $c_f(v)$ are well defined. Since we are summing over all possible labels of the edges, the polynomial $P_{g,k,W,\mathsf{T}}$ does not depend on these choices since changing the orientation amounts to replace $w\in W$ by its inverse. 
	
	The main point of our construction is the following:
	
	\begin{thm}\label{Thm:trianginv}
		The polynomial invariant does not dependent on the triangulation $\mathsf{T}$, hence it is a topological invariant of the ciliated surface. Further, it is preserved under reversing the orientation of the surface (and thus inverting all the boundary data).
	\end{thm}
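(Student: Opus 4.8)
The plan is to establish two separate invariance statements, and the key tool for both is the associativity of the Hecke algebra together with the cyclic symmetry of the structure constants from \Cref{Prop:cyclicinv}. First I would prove invariance under flips, since any two triangulations of a fixed ciliated surface are connected by a finite sequence of flips. A flip replaces the diagonal of a quadrilateral formed by two adjacent triangles with the opposite diagonal. Concretely, consider the two triangles sharing an internal edge labeled by some $w \in W$. In the summand of \Cref{Eq:defpolynomial}, the contribution of this pair, after summing over the shared label $w$, is a sum of products of two structure constants. Using \Cref{Prop:cyclicinv} to write each factor as a trace, this local sum becomes $\sum_w \tr(h_a h_b h_{w^{-1}}) \tr(h_w h_c h_d)$ for the four exterior labels $a,b,c,d$ (up to the bookkeeping of edge orientations). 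The crucial step is then to recognize that summing over $w$ collapses this using \Cref{Eq:trace-hx}: namely $\sum_w \tr(X h_{w^{-1}}) \tr(h_w Y) = \tr(X Y)$, because $\sum_w h_{w^{-1}} \tr(h_w Y) = Y$ expresses $Y$ in the standard basis via the symmetrizing property of the trace. This identity rewrites the local two-triangle contribution as a single trace $\tr(h_a h_b h_c h_d)$ depending only on the quadrilateral's exterior edges, manifestly symmetric between the two diagonal choices. Hence each flip leaves the polynomial unchanged.

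The care required in the flip argument is entirely in the orientation bookkeeping: when the shared diagonal is traversed oppositely by the two triangles, one factor sees $w$ and the other sees $w^{-1}$, and one must verify that the exponents $a^{\pm 1}$ on the exterior edges match up consistently on both sides of the flip. Here \Cref{prop:revert-orientation} is useful, since it guarantees that the quantity attached to each triangle depends only on its oriented edges and not on a chosen orientation of the triangle itself, so the triangle contributions can be assembled into the global product regardless of local conventions. I expect the contraction identity $\sum_w \tr(X h_{w^{-1}})\tr(h_w Y)=\tr(XY)$ to be the conceptual heart of the matter, and the orientation-matching to be the main place where errors could creep in; this is the step I would write out most carefully.

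For the second statement, invariance under reversing the orientation of the surface, I would argue as follows. Reversing the orientation of $\Sigma$ reverses the induced orientation of every triangle, so each counterclockwise boundary labeling $c_{xyz}$ gets replaced by the clockwise reading, which by \Cref{prop:revert-orientation} equals $c_{z^{-1}y^{-1}x^{-1}}$. Equivalently, orientation reversal sends each edge label $w$ to $w^{-1}$ and reverses the direction in which boundaries are read. Since the defining sum in \Cref{Eq:defpolynomial} runs over \emph{all} labelings of the internal edges, the substitution $w \mapsto w^{-1}$ is a bijection of the index set and therefore does not change the total sum, provided the exterior boundary labels are simultaneously inverted. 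This is exactly the ``inverting all the boundary data'' clause in the statement. I would conclude that $P$ for the orientation-reversed surface with inverted boundary labels equals $P$ for the original, completing the proof.

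Finally, to make the flip argument yield a genuine topological invariant, I would invoke the fact recalled in \Cref{Sec:ciliated-surface} that any triangulation can be reached from a reference one by finitely many flips; combined with flip-invariance this shows $P_{g,k,W,\mathsf{T}}$ is independent of $\mathsf{T}$, so I may drop the subscript $\mathsf{T}$ and regard $P$ as an invariant of the ciliated surface alone.
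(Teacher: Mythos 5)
Your proposal is correct and follows essentially the same route as the paper: flip-invariance via associativity of the Hecke product (your contraction identity $\sum_w \tr(X h_{w^{-1}})\tr(h_w Y)=\tr(XY)$ is just the trace-packaged form of the paper's \Cref{Eq:invunderflip}, and is in fact the viewpoint the paper adopts later when contracting indices for $n$-gons), plus the fact that any two triangulations are connected by flips, with orientation reversal handled by \Cref{prop:revert-orientation} and the bijection $w\mapsto w^{-1}$ on internal labelings.
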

	We denote the invariant by $P_{\S,W}$ for a ciliated surface $\S$ or by $P_{g,k,W}$ in case of a punctured surface $\S_{g,k}$. The theorem follows from the \emph{associativity} of the product in the Hecke algebra.
	
	\begin{proof}
	The second part is a direct consequence of \Cref{prop:revert-orientation} and the definition of $P_{g,k,W}$.
	
		For the first part, let $x,y,z \in W$. Then by definition:
		\begin{equation}
		h_xh_y = \sum_{w\in W} c_{xyw} h_{w^{-1}} \ ,
		\end{equation}
		hence
		\begin{equation}
		\sum_{w,v\in W} c_{xyw} c_{w^{-1}zv} h_{v^{-1}} = (h_xh_y)h_z = h_x(h_yh_z) = \sum_{t,v\in W} c_{yzt} c_{xt^{-1}v} h_{v^{-1}} \ ,
		\end{equation}
		which implies
		\begin{equation}\label{Eq:invunderflip}
		\sum_{w\in W} c_{xyw} c_{w^{-1}zv} = \sum_{t\in W} c_{yzt} c_{xt^{-1}v}
		\end{equation}
		for all $x,y,z,v \in W$. \Cref{Eq:invunderflip} can be described graphically as in \Cref{Fig:associativity}.
		
		\begin{figure}[h!]
			\centering
			\includegraphics[height=3cm]{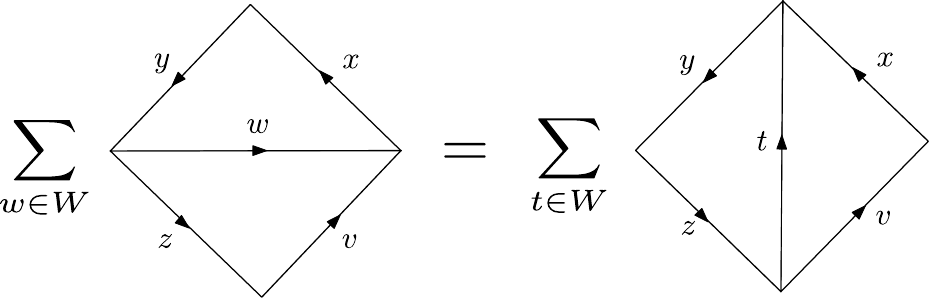}
			\caption{A consequence of associativity in Hecke algebras.}\label{Fig:associativity}
		\end{figure}
		
		Now since any two triangulations of $\Sigma_{g,k}$ can be related via a sequence of flips, \Cref{Eq:invunderflip} implies the proposition.
	\end{proof}

	\begin{example}\label{Ex:firstexamples}
		Let us give some examples of $P_{g,k,W}$. Later on we will come back to them and see how to compute them.
		\begin{itemize}
			\item $P_{0,3,\mathfrak{S}_2}(v) = P_{1,1,\mathfrak{S}_2}(v) = v^2+2+v^{-2}.$
			\item $P_{0,4,\mathfrak{S}_2}(v) = v^4+2v^2+2+2v^{-2}+v^{-4}.$
			\item $P_{0,3,\mathfrak{S}_3}(v) = v^6+2v^4+10v^2+10+10v^{-2}+2v^{-4}+v^{-6}.$
			\item $P_{1,1,\mathfrak{S}_3}(v) = v^6+2v^4+4v^2+4+4v^{-2}+2v^{-4}+v^{-6}.$
			\item $P_{\triangle_{x,y,z},W}(v) = c_{xyz}(v)$ where $\triangle_{x,y,z}$ stands for the triangle seen as the ciliated surface $\Sigma_{0,0,\{3\}}$ with labels $x, y, z \in W$ on the exterior edges assigned counterclockwisely.
		\end{itemize}
	\end{example}
	
	We give additional examples in \Cref{Appendix:compute} where we explain how to use Sage and the package CHEVIE of Gap3 to compute these polynomials.
	
	There are three remarkable observations to be done for punctured surfaces from these examples: the polynomials are functions of $v^{-2}=q$, they are invariant under $q\mapsto q^{-1}$ and have positive coefficients. The first two observations are actually properties that we will prove in \Cref{Prop:invariance-poly}, while we will analyze the positivity in \Cref{Sec:final-positivity}. 
	
	\begin{Remark}
		Jumping ahead a bit, since for punctured surfaces the invariant polynomial only depends on $v^{-2}=q$ we will use the variable $q$ in this case instead of $v$ to lighten the notation. For surfaces with cilia we keep the use of $v$.
	\end{Remark}

	\subsection{Gluing surfaces}
	
	The polynomial invariant behaves nicely under gluing of two surfaces $\Sigma_1$ and $\Sigma_2$ with same boundary data $D$ as in \Cref{Fig:quadri}. Schematically:
	\begin{equation}\label{gluing-1}
	P(\Sigma_1 \cup \Sigma_2) = \sum_D P(\Sigma_1,D)P(\Sigma_2,D)~,
	\end{equation}
	where $P(\Sigma,D)$ denotes the polynomial invariant for the surface $\Sigma$ with boundary data $D$. 
	
	\begin{figure}[h!]
		\centering
		\includegraphics[scale=0.7]{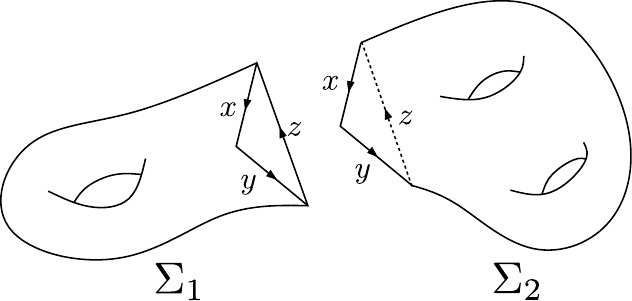}
		\caption{Gluing ciliated surfaces.}\label{Fig:quadri}
	\end{figure}
	
	We can use \Cref{gluing-1} to compute recursively the polynomial. For $\Sigma_1=\Sigma_{0,0,{3}}$ a triangle and $\Sigma_2=\Sigma_{g,k,\{3,...\}}$ a surface with a triangle boundary (that is, a circle component of its boundary with three cilia), \Cref{gluing-1} becomes:
	\begin{equation}\label{Eq:recursion1}
	P\left(\Sigma_{g,k,\{...\}},D\right) = \sum_{x,y,z \in W} c_{xyz} P\left(\Sigma_{g,k,\{3,...\}},D\cup\{x,y,z\}\right)~,
	\end{equation}
	where $D$ is the boundary data of the ciliated surface $\Sigma_{g,k,\{...\}}$ which may have non-empty boundary, and $D\cup\{x,y,z\}$ is the boundary data of $\Sigma_{g,k,\{3,...\}}$. Note that we used \Cref{prop:revert-orientation}.
	
	\begin{figure}[h!]
		\centering
		\includegraphics[scale=0.95]{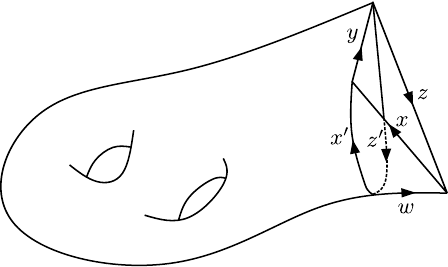}
		\caption{Reducing the number of punctures.}\label{Fig:reducepunct}
	\end{figure}
	
	For a ciliated surface with a triangle boundary carrying the labels $x,y$ and $z$, the gluing can be used to reduce the number of punctures. To do so, we use for $\Sigma_2$ a quadrilateral with boundary $z, x, x'^{-1}, z'^{-1}$. The surface after gluing is depicted in \Cref{Fig:reducepunct}. The polynomial invariant of this quadrilateral is given by $\sum_w c_{xx'^{-1}w}c_{zw^{-1}z'^{-1}}$. Hence we get from the gluing property:
	\begin{equation}\label{Eq:recursion2}
	P\left(\Sigma_{g,k+1}\backslash \triangle_{x,y,z}\right) = \sum_{x',z',w \in W} c_{xx'^{-1}w}c_{zw^{-1}z'^{-1}} P\left(\Sigma_{g,k} \backslash \triangle_{x',y,z'}\right)~,
	\end{equation}
	with the obvious generalizations to the general cases.

	Let us now study our invariants for $q=1$ since the Hecke algebra specializes to the group algebra $\C[W]$ in that case.

	\subsection{Invariants of punctured surfaces at $q=1$}
	
	For $q=1$ the Hecke algebra specializes to the group algebra $\C[W]$.
	
	\begin{prop}\label{Prop:q1}
		For a punctured surface $\Sigma_{g,k}$ the value at $q=1$ of the polynomial is:
		\begin{equation}\label{Eq:q=1}
		P_{g,k,W}(1) = (\# W)^{k-1} \times \#\{\text{solutions in } W \text{ to } \prod_{i=1}^g [a_i,b_i] = 1\} ~.
		\end{equation}
	\end{prop}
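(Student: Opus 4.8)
The plan is to specialize everything to the group algebra and reinterpret the state sum as a count of flat $W$-connections on the triangulated surface. First I would observe that at $q=1$, i.e.\ $v=1$, the quadratic relation \eqref{Eq:Quadrel} degenerates to $h_s^2=1$, so $\He$ collapses to the group algebra $\C[W]$ with $h_w$ becoming the group element $w$. Under this specialization the standard trace sends $\sum_w c_w h_w$ to $c_e$, so by \Cref{Prop:cyclicinv} the structure constant becomes $c_{xyz}(1)=\tr(xyz)=\delta_{xyz,e}$; more generally the quantity attached to a face with oriented edge labels is the indicator that the corresponding product (with inverses on edges traversed backwards) equals the identity. Summing over all labelings therefore makes the orientation choices immaterial, and \eqref{Eq:defpolynomial} at $v=1$ simply counts the labelings $\ell\colon E(\mathsf T)\to W$ whose holonomy around every triangle is trivial --- that is, the number of flat $W$-connections on $\mathsf T$.

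Next I would pass to topology. Since $\Sigma_{g,k}$ has no cilia, the triangulation $\mathsf T$ realizes the closed genus-$g$ surface $\overline\Sigma_g$ as a connected $2$-complex $X$ whose $0$-cells are exactly the $k$ punctures, so $\#V(\mathsf T)=k$; by \Cref{Thm:trianginv} the count is triangulation-independent, so I am free to compute it on $X$. The number of flat $W$-connections on a connected $2$-complex is governed by the standard spanning-tree argument: fixing a spanning tree of the $1$-skeleton, the labels on its $V-1$ edges may be chosen freely (giving a factor $(\#W)^{V-1}$), after which the face relations constrain the remaining edge labels exactly as the relators of a presentation of $\pi_1(X)$ constrain its generators. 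This yields the formula
\begin{equation*}
\#\{\text{flat labelings}\}=(\#W)^{V-1}\,\#\Hom(\pi_1(X),W).
\end{equation*}

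Finally I would identify the two factors. Here $V=k$, and $\pi_1(\overline\Sigma_g)=\langle a_1,b_1,\dots,a_g,b_g\mid \prod_{i=1}^g[a_i,b_i]=1\rangle$, so $\#\Hom(\pi_1(\overline\Sigma_g),W)$ is precisely the number of tuples in $W^{2g}$ solving $\prod_{i=1}^g[a_i,b_i]=1$. Substituting gives \eqref{Eq:q=1}.

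The step that needs the most care is the spanning-tree bijection: one has to check that after fixing the tree labels to arbitrary values, the per-face relations on the non-tree edges become the defining relations of $\pi_1(X)$ conjugated by the (fixed) tree transports, so that the number of solutions is $\#\Hom(\pi_1(X),W)$ independently of the chosen tree labels. As a consistency check, specializing the Schur-element expression of \Cref{Thm:explexpr} at $q=1$, where the Schur elements of $\C[W]$ are $s_\lambda(1)=\#W/\dim V_\lambda$, and comparing with Frobenius's formula $\#\Hom(\pi_1(\overline\Sigma_g),W)=(\#W)^{2g-1}\sum_\lambda(\dim V_\lambda)^{2-2g}$ reproduces exactly the same value.
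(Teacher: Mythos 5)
Your proof is correct, and it takes a genuinely different route from the paper's. The paper develops $\Sigma_{g,k}$ explicitly as a $4g$-gon with all vertices identified to one puncture, exhibits a concrete triangulation (adding $k-1$ interior marked points joined by a path for the remaining punctures), and checks by hand that the inner edge labels are forced by the boundary labels $a_i,b_i$ and the path labels $x_i$, with $\prod[a_i,b_i]=1$ as the only surviving constraint. You instead observe that at $q=1$ the face weight is the indicator of trivial holonomy, so the state sum counts flat $W$-labelings of an arbitrary triangulation of $\overline\Sigma_g$ viewed as a CW complex with $V=k$ vertices, and then invoke the standard spanning-tree bijection
\begin{equation*}
\{\text{flat labelings}\}\;\cong\;W^{V-1}\times\Hom\bigl(\pi_1(X),W\bigr),
\end{equation*}
which you correctly identify as the one step requiring care (the non-tree edge labels are the images of the edge-generators of $\pi_1(X)$ twisted by the fixed tree transports, and the face relations become exactly the relators). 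Your argument is more conceptual and uniform: it works for any triangulation without choosing a special one, it does not really need the triangulation-independence of \Cref{Thm:trianginv}, and it makes the appearance of $\#\Hom(\pi_1(\overline\Sigma_g),W)$ — hence of Frobenius's formula in the remark following the proposition — transparent. What the paper's hands-on computation buys in exchange is an explicit picture (the $4g$-gon with the red path of punctures) that is reused verbatim in the proof of \Cref{Prop:reduction-to-sphere} and in the trace expression \eqref{easyexpr}. Your closing consistency check against \Cref{Thm:explexpr} at $q=1$ is also correct.
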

	
	In particular $P_{0,k,W}(1) = (\# W)^{k-1}$ which is in accordance with \Cref{Ex:firstexamples}.
	
	\begin{Remark}
		The right-most term in the right-hand-side of \Cref{Eq:q=1} can be expressed in terms of the characters of $W$ using Frobenius' formula (see \cite[theorem A.1.10 in the Appendix by Don Zagier]{lando2013graphs}). This yields: 
		\begin{equation}\label{poly-q1}
		P_{g,k,W}(1) = (\# W)^{2g-2+k} \times \sum_{\chi}\frac{1}{\chi(1)^{2g-2}}~.
		\end{equation}
		Notice the similarity to Burnside's formula for Hurwitz numbers (see \cite[Theorem 1.3]{gunningham2016spin} and the original paper \cite{hurwitz1891ueber}).
	\end{Remark}
	
	Our strategy to prove \Cref{Prop:q1} is to develop the surface $\Sigma_{g,k}$ as a $4g$-gon and to count explicitly the contributions.
	
	\begin{proof}
		We start with the case $k=1$ and $g>0$. The surface $\Sigma_{g,1}$ is obtained by gluing the sides of a $4g$-gon as described on the left of \Cref{Fig:q1-1} in the case $g=2$. We also show a triangulation $\mathsf{T}$. Note that the $4g$ vertices of the polygon correspond to a single point in $\Sigma_{g,1}$ which is the puncture.
		
		\begin{figure}[h]
			\centering
			\includegraphics[height=5cm]{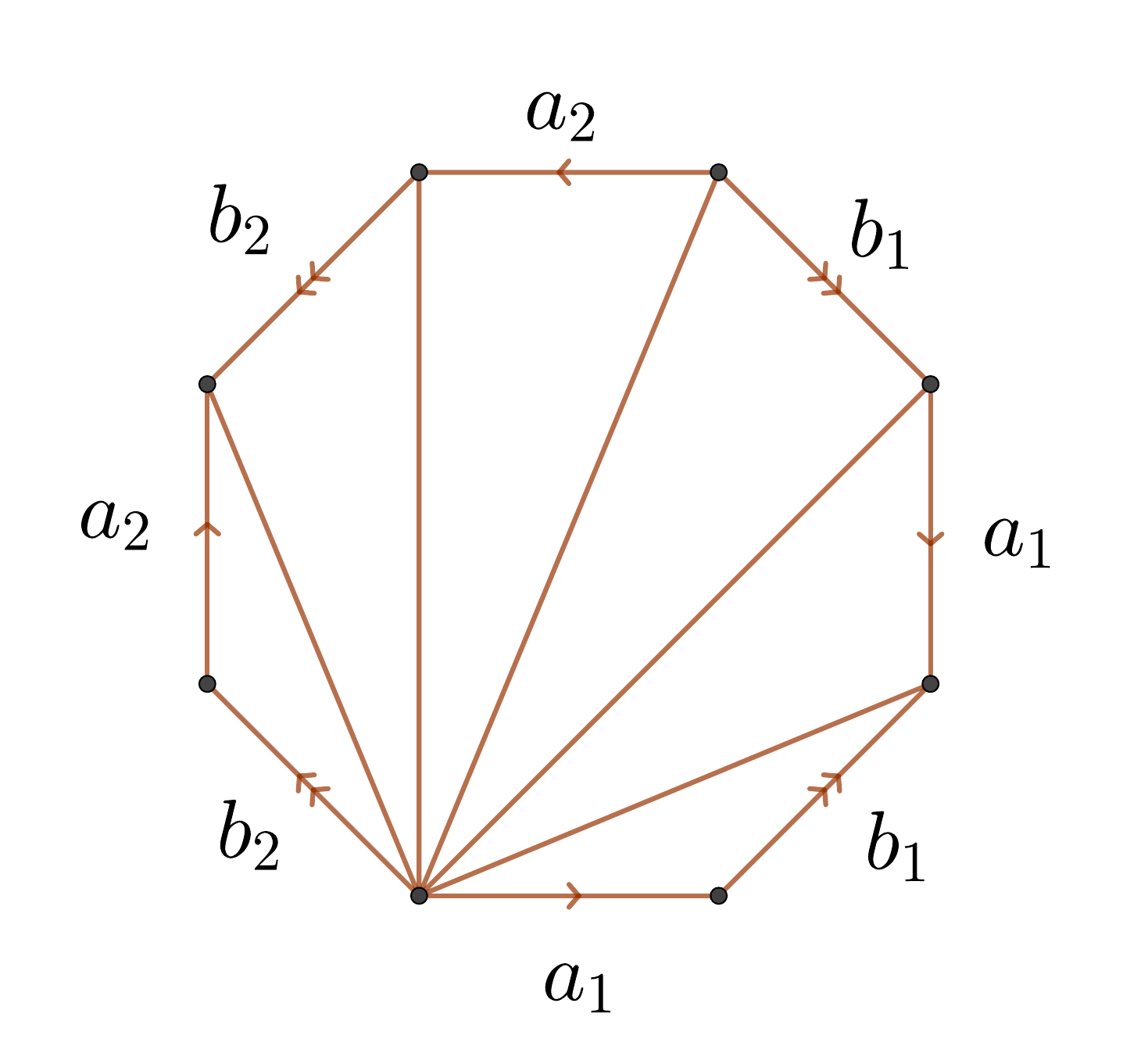}
			\includegraphics[height=5cm]{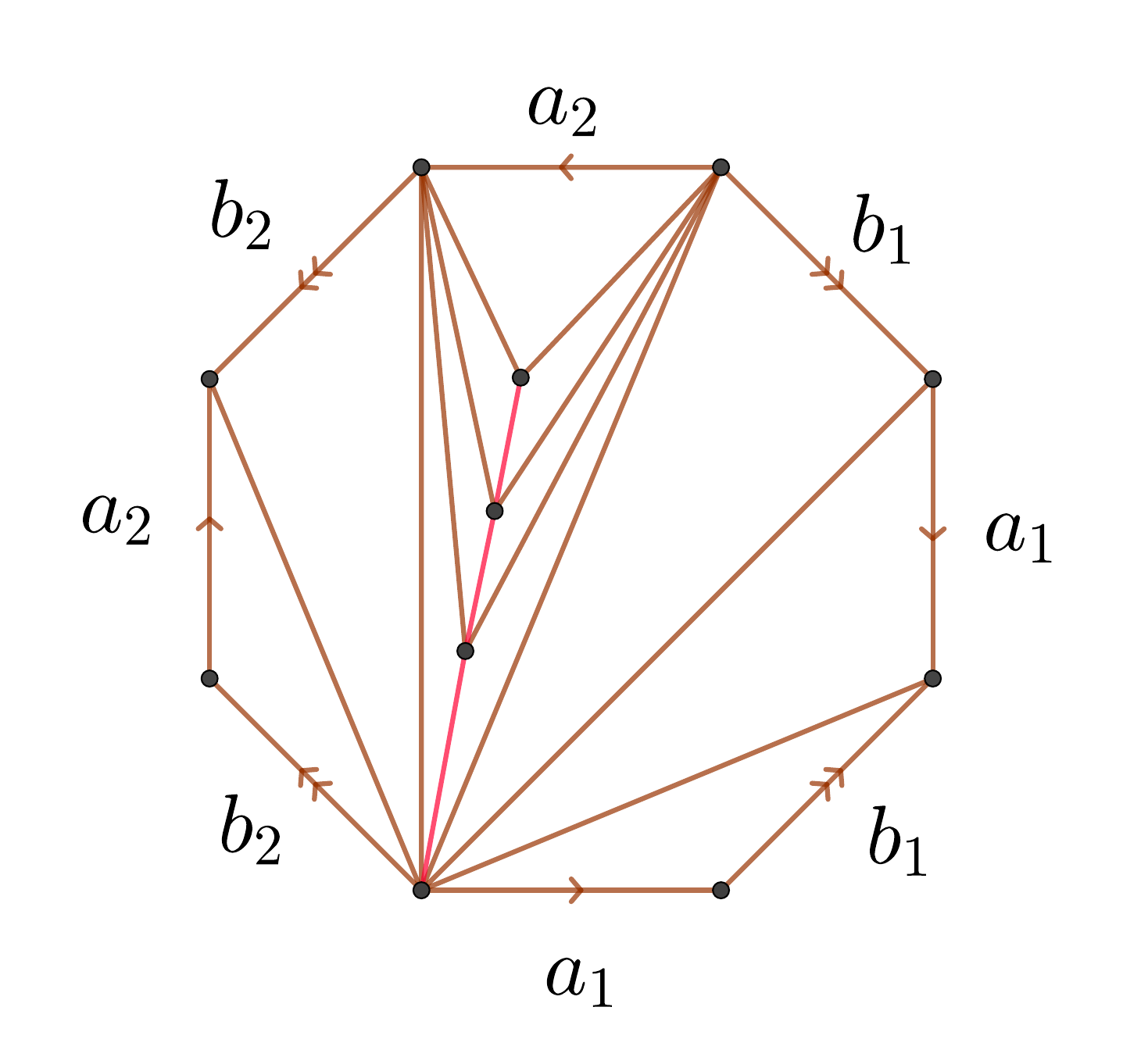}
			\caption{Gluing of $4g$-gon}\label{Fig:q1-1}
		\end{figure}
		
		The specializations of the structure constants $c_{xyz}$ of the Hecke algebra at $q=1$ are equal to 1 if $xyz = 1$ in $W$ and to zero otherwise. In order to get non-zero contributions to the polynomial invariant specialized at $q=1$, the label on a side of a face $f$ of $\mathsf{T}$ is determined by the labels on the two other sides of $f$. Let now $a_1, b_1, ..., a_g, b_g \in W$ be the labels assigned to the edges of the $4g$-gon. Let $\mathsf{T}$ be a triangulation of the $4g$-gon whose inner edges are all incident to a fixed vertex as on the left of \Cref{Fig:q1-1}. The labels on the inner edges of $\mathsf{T}$ are completely determined by the $a_i$ and $b_i$, otherwise the corresponding contribution to the invariant vanishes. 
		
		There is the following additional constraint:
		\begin{equation}\label{Eq:standard}
		\prod_{i=1}^g [a_i,b_i] = 1 ~ ,
		\end{equation}
		which can be understood as the consequence of the fact that the products of labels assigned to each triangle whose boundary is oriented counterclockwise has to be $1$ if one considers all the triangles glued together along edges as on the left of \Cref{Fig:q1-1}.
		
		Every solution of \Cref{Eq:standard} has contribution $1$ to the polynomial, and all other choices of $a_i$ and $b_i$ do not contribute. Therefore we get the proposition in the case $k=1$.
		
		For $k>1$, we add in our picture $k-1$ marked points in the middle of the polygon. We complete to a triangulation as shown on the right of \Cref{Fig:q1-1}. In particular there is a path of length $k-1$ connecting all $k$ punctures shown in red in \Cref{Fig:q1-1}. To each edge of this path, we associate a new label $x_i \in W$. One easily checks that these $x_i$ together with the $a_j$ and $b_j$ from above uniquely determines the labels assigned to each edge of the triangulation. The only relation is \Cref{Eq:standard} from above. Having fixed the data on the boundary of the $4g$-gon we have $(\# W)^{k-1}$ free choices which contribute by $1$ to our polynomial.
		
		Eventually for $g=0$ we represent $\Sigma_{0,k}$ as the gluing of a $2k-2$-gon with boundary $a_1, a_1^{-1}, a_2, a_2^{-1}, ..., a_{k-1}, a_{k-1}^{-1}$ with a triangulation similar as the ones of \Cref{Fig:q1-1}. The labels on the inner edges are determined by the boundary data and the constraint on the boundary is always satisfied. So we get $P_{0,k,W}(1) = (\# W)^{k-1}$.
	\end{proof}

	\section{Graphical calculus}\label{Sec:graph-calcul}
	
	In this section, we present a diagrammatic interpretation of our polynomial. In particular we give a graphical way to multiply elements in the Hecke algebra. In this section, we work with a Coxeter system $(W,S)$ associated to a Weyl group.
	
	We start from the observation that the structure constants in the Hecke algebra are linked to configurations of flags. From that, we define graphs with labeled edges which count the contributions to our polynomial.

	\subsection{Structure constants and flag counting}\label{Sec:counting}
	
	There is a well-known link between the structure constants in the Hecke algebra and triples of flags (see e.g. \cite[Proposition 2.2]{curtis1988representations}). We present a short way to get this link. 
	
	In \Cref{Sec:Hecstand}, we introduced the Hecke algebra using generators and relations. However the original definition by Iwahori \cite{iwahori1964structure} is geometric. Consider a finite field $\mathbb{F}_q$, a simple Lie group $G$ over $\bb{F}_q$ and fix a Borel subgroup $B$ of $G$. The Hecke algebra $\He^q_G$ is the algebra of functions on $G$ invariant under left and right shift by $B$. Multiplication is given by the convolution divided by the order of the Borel subgroup $\# B$. 
	Note that the double quotient $B\backslash G/B$ is in bijection with $W$. This is why the Hecke algebra has a basis $(T_w)_{w\in W}$ parameterized by the Weyl group. In this geometric approach the generators $(T_s)_{s\in S}$ satisfy the braid relations and the quadratic relation reads:
	\begin{equation}
	T_s^2 = (q-1)T_s+q~.
	\end{equation}
	
	The structure constants $\tensor{C}{_{xy}^{z}}$ are defined by 
	\begin{equation}
	T_xT_y = \sum_{z\in W} \tensor{C}{_{xy}^{z}}(q) T_z~.
	\end{equation}
	
	From the definition of the convolution product, it follows that 
	\begin{equation}
	\tensor{C}{_{xy}^{z}}(q) = \# \{ h\in x\mid h^{-1}g\in y\} / \# B~,
	\end{equation}
	where we interpret $x, y$ and $z$ as elements of $W\cong B\backslash G / B$ and where $g$ is any representative of $z\in B\backslash G/B$.
	
	The quotient $G/B$ is called the \textit{flag variety} of $G$. For $G=\GL_n(\C)$ this is the space of complete flags in $\C^n$.
	Recall the isomorphism $G\backslash (G/B)^2 \cong B\backslash G /B$ given by $h\mapsto (F_0,hF_0)$ where $F_0 \in G/B$ is the class of the unit element in $G$. Combining this isomorphism with $B\backslash G /B \cong W$, we see that the relative positions of two flags is described by the Weyl group.
	Using this isomorphism we can rewrite the formula for the structure constants as
	\begin{equation}
	\begin{split}
	\tensor{C}{_{xy}^{z}}(q) &= \# \{ h\in G\mid (F_0,hF_0) = x, (F_0,h^{-1}gF_0) = y\} / \# B \\
	&= \# \{ h\in G\mid (F_0,hF_0) = x, (hF_0,gF_0) = y\} / \# B ~.
	\end{split}
	\end{equation}
	
	Taking into account that $B$ acts freely on the right on the set $\{h\in x \mid h^{-1}g\in y\}$ and denoting $gF_0$ by $F_2$ and $hF_0$ by $F_1$, we finally get:
	
	\begin{prop}\label{Prop:structure-const-flags}
		The structure constants $\tensor{C}{_{xy}^{z}}(q)$ count the configuration of three flags with prescribed relative positions:	
		\begin{equation}
		\tensor{C}{_{xy}^{z}}(q) = \# \{ F_1 \in G/B \mid (F_0,F_1) = x, (F_1,F_2) = y\}~,
		\end{equation}
		where $(F_0,F_2) = z$.
	\end{prop}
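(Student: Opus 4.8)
The plan is to pick up exactly where the displayed computation above ends, namely from the expression $\tensor{C}{_{xy}^{z}}(q) = \#\{ h\in G\mid (F_0,hF_0) = x,\ (hF_0,gF_0) = y\}/\# B$, and to show that dividing by $\# B$ collapses the count of group elements $h$ into a count of intermediate flags $F_1 = hF_0$. First I would check that the two conditions cutting out the set depend on $h$ only through the coset $hB\in G/B$. Since $F_0$ is the class of the identity, one has $bF_0 = F_0$ for every $b\in B$, whence $hbF_0 = hF_0$; thus both relative positions $(F_0,hF_0)$ and $(hF_0,gF_0)$ are unchanged under $h\mapsto hb$. Consequently the set is invariant under right translation by $B$, i.e.\ it is a disjoint union of right cosets $hB$.

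Next I would use that $B$ acts freely on $G$ by right translation, which is immediate because $G$ is a group; hence every fibre of the projection $G\to G/B$, $h\mapsto hF_0$, has exactly $\#B$ elements. Therefore the number of cosets comprising the set equals $\#\{h:\dots\}/\#B$, and each such coset corresponds to a unique flag $F_1 = hF_0$ satisfying $(F_0,F_1)=x$ and $(F_1,F_2)=y$, where I set $F_2:=gF_0$. Substituting $F_1$ and $F_2$ then gives precisely the claimed formula. To finish, I would record that $(F_0,F_2)=z$, which is just the statement that $g$ was chosen as a representative of the class $z\in B\backslash G/B$, read through the isomorphism $G\backslash(G/B)^2\cong B\backslash G/B$, $h\mapsto(F_0,hF_0)$, combined with $B\backslash G/B\cong W$.

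I do not anticipate a genuine obstacle: the content of the argument is the elementary identity $bF_0=F_0$ for $b\in B$ (which lets the defining conditions descend to $G/B$) together with the freeness of the right $B$-action (which makes all fibres have the common size $\#B$). The only thing requiring a little care is making explicit the bijection between right $B$-cosets of admissible $h$ and admissible flags $F_1$; once that correspondence is stated cleanly, the rest is bookkeeping.
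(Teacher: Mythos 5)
Your argument is correct and is essentially identical to the paper's: the paper likewise observes that the defining conditions depend on $h$ only through the coset $hB$ (equivalently, that $B$ acts freely on the right on the set $\{h\in x\mid h^{-1}g\in y\}$), so that dividing by $\#B$ converts the count of group elements into a count of intermediate flags $F_1=hF_0$, with $F_2=gF_0$ recording the class $z$. No gaps.
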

	
	\begin{example}
		Let $G=\SL(2)$. Then $G/B = \mathbb{P}^1, W=\{1,s\mid s^2=1\}$, $\tensor{C}{_{11}^{1}}=1, \tensor{C}{_{11}^{s}}=\tensor{C}{_{s1}^{1}}=\tensor{C}{_{1s}^{1}}=0$ and $\tensor{C}{_{1s}^{s}}=\tensor{C}{_{s1}^{s}}=1$ obviously. To compute $\tensor{C}{_{ss}^{1}}(q)$, choose $F_0=F_2=\infty$. Then $\tensor{C}{_{ss}^{1}}(q)=\# \{p\in \bb{P}^1(\mathbb{F}_q)\mid p\neq \infty \} = q$. To compute $\tensor{C}{_{ss}^{s}}(q)$, choose $F_0=0$ and $F_2=\infty$. Then $\tensor{C}{_{ss}^{s}}(q) = \# \{p\in \bb{P}^1(\mathbb{F}_q) \mid p\neq 0,\infty\} = q-1$.
	\end{example}

	\subsection{Finite higher laminations for triangles}
	
	In this subsection, we present a graphical way to compute the product in the Hecke algebra in the standard basis. Our graphs are very similar to the ones that appear in the article \cite{elias2016soergel} by Elias--Williamson, in the context of the categorification of the Hecke algebras in terms of Soergel bimodules.

	\subsubsection{Definition}\label{Sec:defhighlam}
	
	We use the normalized version of the Hecke algebra where the quadratic relation reads $h_s^2 = 1+Qh_s$ where $Q=v-v^{-1}=q^{1/2}-q^{-1/2}$.
	
	The idea is to use the interpretation of the structure constants as triples of flags from the previous subsection. The transition between two flags with given relative position is decomposed into elementary moves. This corresponds to a decomposition of a product into elementary ones involving simple elements only. For example the quadratic relation is graphically given by the following picture:
	\begin{figure}[h!]
		\centering
		\includegraphics[height=3.3cm, trim= 0 100 0 100, clip]{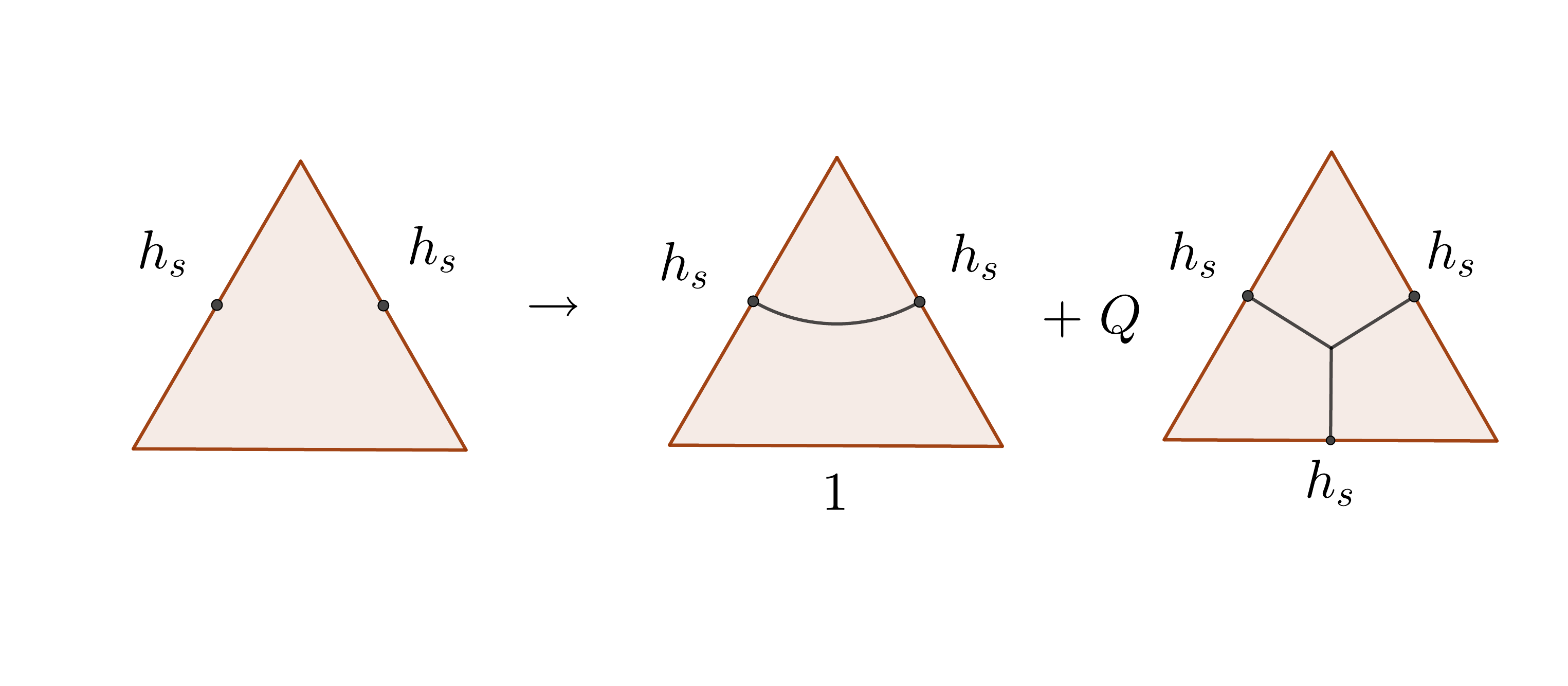}
		\caption{Graphical multiplication in Hecke algebra}\label{Fig:basic-example}
	\end{figure}
	
	More generally for $x,y \in W$ the product $h_xh_y$ is represented as a formal sum of graphs with edges labeled by simple reflections.
	First one has to fix reduced expressions of $x$ and $y$, using the simple reflections. Each simple element $h_s$ is represented graphically by an edge labeled by $s\in S$. 
	Elementary computations in $\He$ are either braid or quadratic relations, hence the vertices of the graph are of two types:
	\begin{itemize}
		\item[$\bullet$] \textit{trivalent} with the three edges carrying the same label, called \textbf{ramification point},
		\item[$\bullet$] \textit{of braid type} at the crossing of edges carrying the labels $s$ and $t$ such that $(st)^{m}= e$ in $W$. In that case there are $m$ incident edges of type $s$ and $m$ of type $t$ which alternate (the cases $m=2$ and $m=3$ are drawn in the middle and on the right of \Cref{Fig:vertex-types}).
	\end{itemize}
	
	\begin{figure}[h!]
		\centering
		
		\includegraphics[height=4cm, trim= 50 50 50 100, clip]{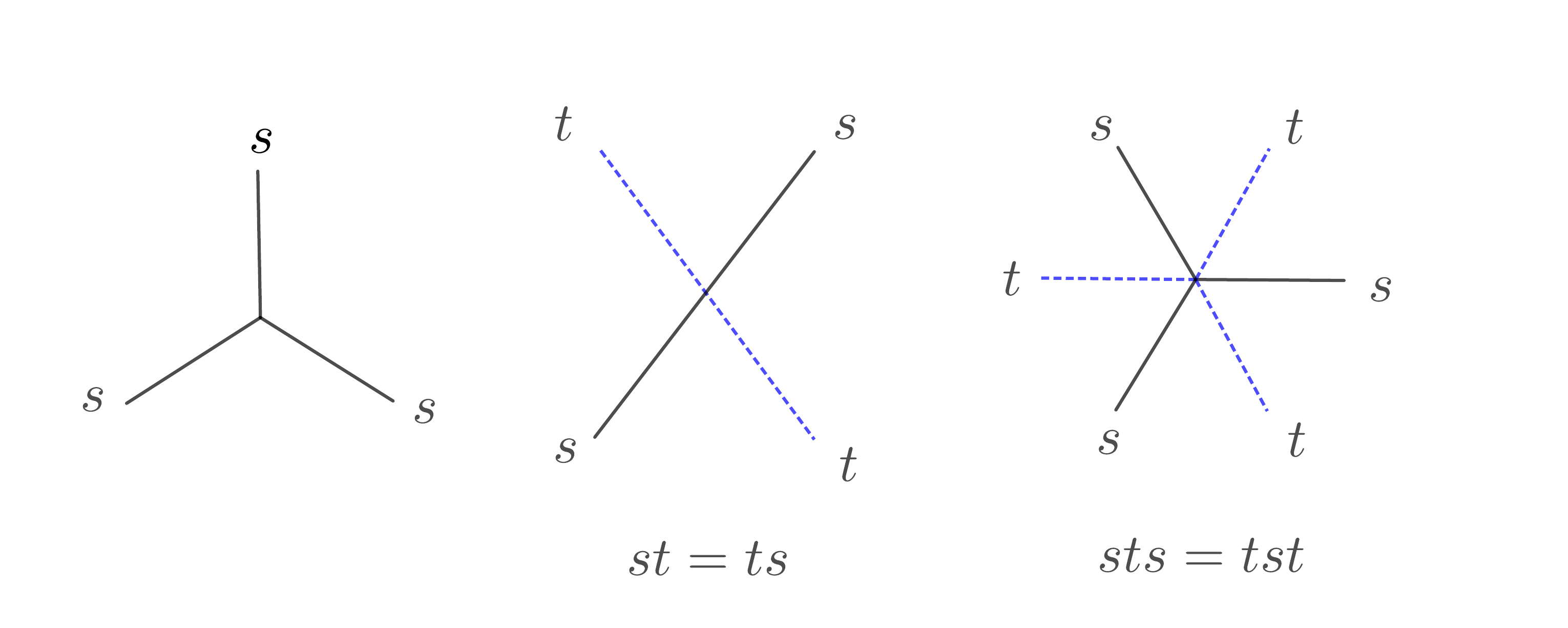}
		
		\caption{Vertex types} \label{Fig:vertex-types}
	\end{figure}

	\begin{definition}\label{Def:lamination}
		A \textbf{finite higher lamination of type $(W,S)$} (in short higher lamination) on a triangle $t$ is an equivalence class of planar graphs $\Gamma\subset t$ with edges labeled by elements in $S$, and satisfying the following criteria:
		\begin{enumerate}
			\item each vertex is either trivalent or of braid type,
			\item the edges of $\Gamma$ intersect the boundary of $t$ transversally,
			\item reading the labels along an edge of $t$ gives a reduced word in $W$,
			\item the graph is minimal in the sense described below.
		\end{enumerate}
		The equivalence class is generated by isotopy and the relations (1.1), (2.1), (2.2), (3.1) and (3.2) described below in \Cref{Sec:graphic-relations}.
	\end{definition}
	
	\begin{Remark}\label{Rem:higherlaminations}
		We call these graphs finite higher laminations because they generalize rational bounded measured laminations as described in \cite{fock2007dual}. The rough idea is that one recovers rational bounded measured laminations from the higher laminations for the affine Weyl group $\widehat{A}_1 = \langle s,t \mid s^2=t^2=1 \rangle$ after removing the singular leaves. Details of this correspondence and higher laminations corresponding to affine Coxeter systems will be discussed in details in a forthcoming publication. Aspects of higher laminations as tropical points of higher Teichmüller spaces and generalizations of rational measured laminations have been studied in \cite{xie2390higher} and \cite{le2016higher}.
	\end{Remark}
	
	\paragraph{Minimality.}
	Let $\Gamma$ be an $S$-labeled graph on an oriented triangle $t$ satisfying the first three conditions of \Cref{Def:lamination}, and let $x, y, z$ be the elements of $W$ corresponding to the reduced words on the sides of $t$.
	A configuration of flags on $t\backslash \Gamma$, one flag for each face, is called \textbf{valid} if the relative position of two adjacent flags is given by the simple element which appears as the label on the edge between the two corresponding faces.
	
	\begin{definition}
		The graph $\Gamma$ is \textbf{minimal} if for all triple of flags, one flag for each vertex of $t$, there is at most one valid configuration of flags on $t\backslash \Gamma$ which extends the triple of flags.
	\end{definition}
	
	An example of a non-minimal graph is given in \Cref{Fig:non-minimal}: the flag corresponding to the inner component of $t\backslash \Gamma$ is not uniquely determined by the flags around.
	
	\begin{figure}[h!]
		\centering
		\includegraphics[height=3.5cm]{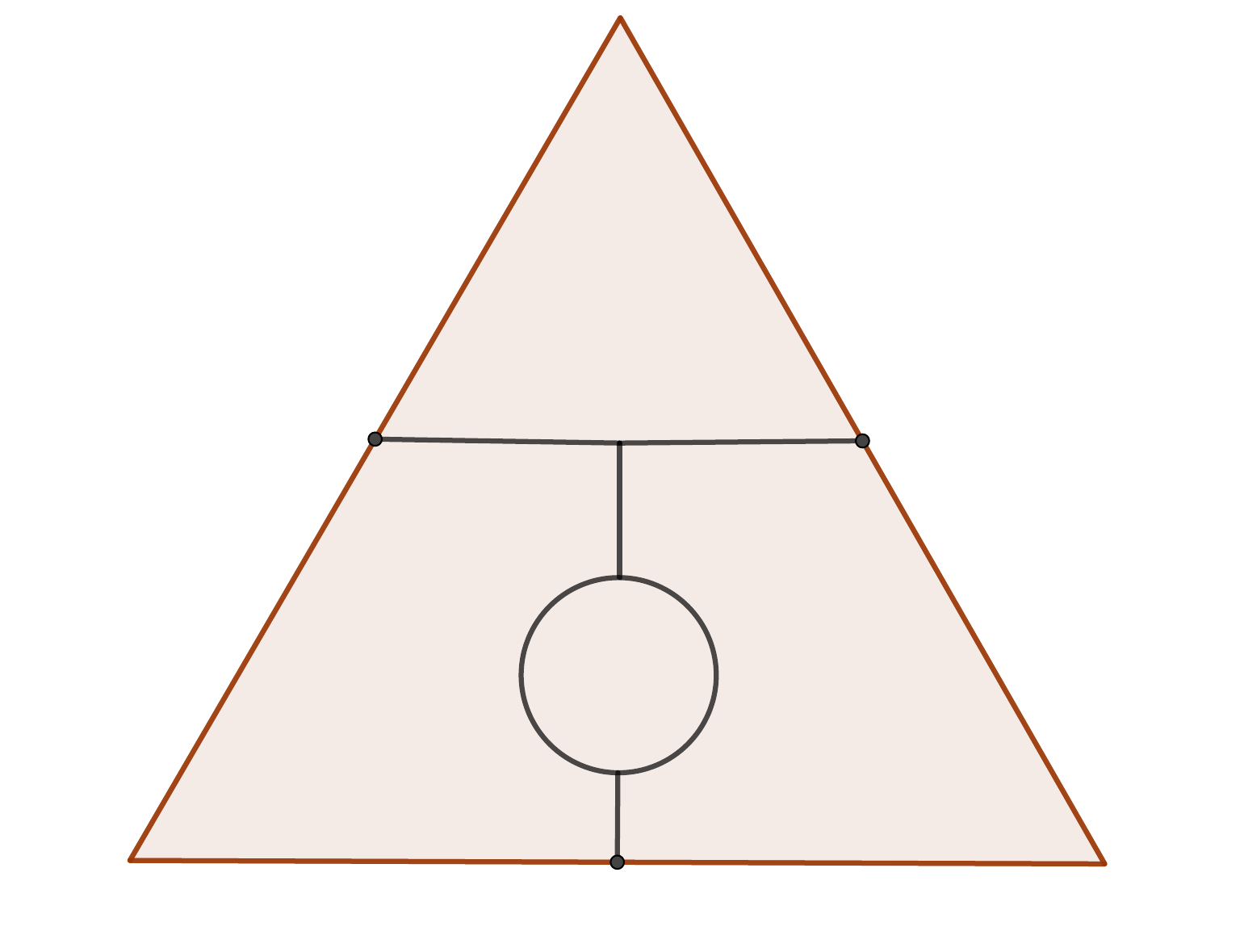}
		\caption{Example of a non-minimal graph}\label{Fig:non-minimal}
	\end{figure}
	
	\begin{Remark}
		A simple criterion to decide whether a graph is minimal or not is still lacking. In particular, we would like to find a minimality criterion for general Coxeter systems $(W,S)$ for which flags are not defined.
	\end{Remark}

	\subsubsection{Relations}\label{Sec:graphic-relations}
	
	Given a triple of flags, there might be several minimal labeled graphs realizing the configuration. In other words, the decomposition into elementary moves between flags is not unique. For example, given two flags in relative position $w\in W$ any reduced expression for $w$ is a working decomposition into elementary moves. Hence we have to quotient out by relations in order to count each triple of flags only once in \Cref{Prop:structure-const-flags}. We describe here a set of relations that we conjecture to be complete.
	
	There are relations involving one, two and three different labels (also called ``colors''). The one-color relation is given by the following picture:
	\begin{figure}[h!]
		\centering
		\includegraphics{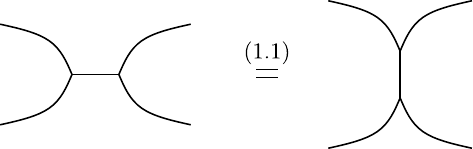}
		\caption{One-color relation}\label{Fig:one-color-relation}
	\end{figure}
	
	The two-color relations are of two kinds: the first states that one can simplify two neighbor vertices when they are of the same braid type, as shown on the left of \Cref{Fig:two-color-relation} for $m=3$ (for $m=2$ this gives the second Reidemeister move). The second relation describes how to glide a trivalent vertex through a vertex of braid type and is shown on the center (resp. right) of \Cref{Fig:two-color-relation} for $m=2$ (resp. $m=3$). 
	\begin{figure}[h!]
		\centering
		\includegraphics{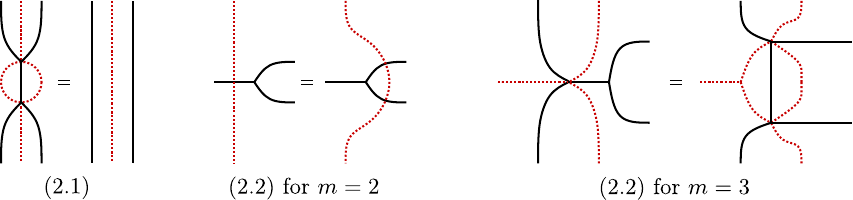}
		\caption{Two-color relation}\label{Fig:two-color-relation}
	\end{figure}
	
	Eventually there is a three-color relation for each parabolic subgroup of rank 3 in $W$, displayed in \cite[Section 5]{elias2016soergel}. Two examples are presented in \Cref{Fig:three-color-relation}: the upper-one is the relation corresponding to a subgroup of type $A_1\times A_1\times A_1$ (it is the third Reidemeister move) and the lower-one corresponds to a subgroup of type $A_3$.
	\begin{figure}[h!]
		\centering
		\includegraphics{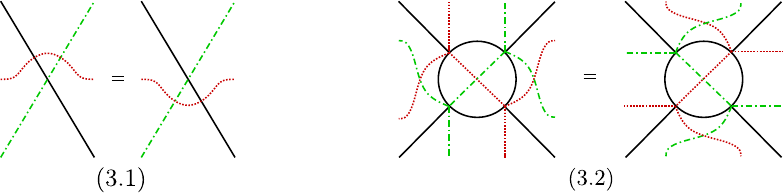}
		\caption{Three-color relation}\label{Fig:three-color-relation}
	\end{figure}
	
	\begin{Remark}
		All the relations of \cite{elias2016soergel} which do not imply a loose end are relations for our higher laminations. It seems there is a link between the graphical calculus in the Hecke algebra we introduced and the graphical calculus of Elias--Williamson describing morphisms between Bott--Samelson bimodules, even we do not understand the correspondence yet.
	\end{Remark}
	
	\Cref{Fig:example-relations} shows the equivalence of two seemingly different graphs through these relations.
	
	\begin{figure}[h!]
		\centering
		\includegraphics{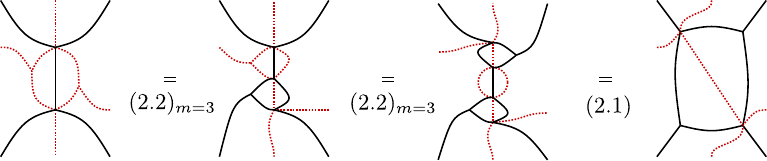}
		\caption{An example of applying relations}\label{Fig:example-relations}
	\end{figure}
	
	The two-color relation (2.1) and all three-color relations give a complete list of relations for reduced expressions in $W$: two reduced expressions of an element of $W$ can be related through a finite number of these relations \cite[Chapter 2, §5]{ronan2009lectures}. This is enough to show that the latter are all the relations we need in the case of higher laminations without ramification points.
	
	In general, we conjecture the following:
	\begin{conj}\label{Conj:complete-relations}
		The relations of above are complete: two minimal labeled graphs corresponding to the same triple of flags on the vertices of the triangle can be related through a finite sequence of them.
	\end{conj}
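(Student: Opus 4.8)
The plan is to prove the conjecture by reducing every minimal graph to a canonical normal form and then showing that this normal form is determined by the triple of corner flags; completeness would then follow, since two minimal graphs carrying the same flag data reduce to one and the same normal form.

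First I would dispose of the boundary. By condition (3) of \Cref{Def:lamination}, each side of the triangle $t$ carries a reduced expression for the corresponding relative position $x$, $y$, or $z \in W$, and two minimal graphs may well use different reduced expressions. I would apply the two-color braid relation (2.1) together with the three-color relations to bring these boundary words to fixed canonical reduced expressions. This is exactly the content of the theorem of Tits--Matsumoto invoked after \Cref{Fig:example-relations} (through \cite{ronan2009lectures}): any two reduced expressions of an element of $W$ are connected by precisely these relations. After this step both graphs share identical boundary data, and only their interiors remain to be compared.

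Next I would set up a terminating, locally confluent rewriting system on the interior. One orients the relations so as to decrease a well-founded complexity measure, for instance the lexicographic pair consisting of the total number of vertices and the number of ramification points, using (2.1) to cancel adjacent braid vertices of the same type, (2.2) to glide trivalent ramification points through braid vertices toward a distinguished corner, and the one-color relation (1.1) to remove redundant ramification points. Termination is immediate from the complexity measure. The genuine difficulty is \emph{local confluence}, i.e. showing that every critical pair, every overlap of two left-hand sides of relations, can be resolved using the listed relations alone. Since the three-color relations are indexed by the rank-$3$ parabolic subgroups of $W$ and each overlap is supported in a bounded region of $t$, this should reduce to a finite case analysis inside each parabolic subgroup of rank at most $3$, and Newman's diamond lemma would then deliver a unique normal form. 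The final step is geometric: using \Cref{Prop:structure-const-flags} and minimality, for a generic triple of corner flags $(G_0,G_1,G_2)$ the interior flags of a minimal graph are uniquely forced, and the normal form is built precisely to record this forced data by canonical reduced words between canonically ordered faces; hence two normal forms arising from the same corner triple are isotopic, and thus equal as equivalence classes.

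I expect local confluence to be the real obstacle. Resolving all critical pairs in which ramification points interact with multi-colored braid vertices is exactly the sort of coherence statement that Elias--Williamson \cite{elias2016soergel} establish, by deep Hodge-theoretic means, for the diagrammatic calculus of Soergel bimodules; and, as the Remark preceding \Cref{Conj:complete-relations} observes, the precise dictionary between that calculus and the present flag-counting one is not yet understood. A fully rigorous argument would therefore likely demand either a careful rank-$\leq 3$ critical-pair analysis tailored to higher laminations, or an import of the Elias--Williamson coherence results matched to this geometric setting -- which is presumably why the statement is posed only as a conjecture.
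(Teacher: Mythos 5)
The statement you are asked to prove is stated in the paper as a \emph{conjecture}: the authors offer no proof, only the heuristic remark that the missing relations, if any, should involve ramification points, since relation (2.1) together with the three-color relations is already known (via Tits--Matsumoto, cf.\ \cite{ronan2009lectures}) to be complete for graphs without trivalent vertices. Your proposal does not close this gap either: it is a strategy outline whose decisive step --- local confluence of the proposed rewriting system, i.e.\ the resolution of every critical pair in which a ramification point overlaps a braid vertex or another ramification point --- is explicitly deferred. As written, the argument therefore proves nothing beyond what the paper already establishes (completeness in the ramification-free case), and you acknowledge as much in your final paragraph. A proof that ends by saying the key lemma ``should reduce to a finite case analysis'' is a research plan, not a proof.

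Beyond the missing confluence argument, two specific steps would need repair even within your own framework. First, the relations of \Cref{Sec:graphic-relations} are unoriented equivalences, and your proposed complexity measure does not obviously decrease under all of them: the one-color relation (1.1) and the glide relations (2.2) preserve both the total number of vertices and the number of ramification points (a consequence the paper itself notes, since $\ram(\Gamma)$ is an invariant of the equivalence class), so they cannot be oriented to strictly decrease your lexicographic pair, and termination of the rewriting system is not ``immediate.'' Second, the concluding geometric step conflates two different things: minimality guarantees at most one valid interior flag configuration for \emph{each} corner triple, but the conjecture asks that two minimal graphs realizing the \emph{same} triple be related by the listed moves; knowing that both graphs force the same interior flags does not by itself produce a sequence of relations connecting them --- that is precisely the coherence statement to be proved. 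You correctly identify that importing the Elias--Williamson coherence results would be the natural route, but the paper itself remarks that the dictionary between the two calculi is not understood, so this cannot be cited as a black box.
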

	
	The conjecture seems reasonable since we only have to look for relations involving ramification points. Our one-color relation describes how two ramification points interact and the two-color relation (2.2) describes the interaction between a ramification point and a vertex of braid type. It does not seem too presumptuous to expect that these are the only cases one needs to consider.

	\subsubsection{Existence}
	
	We show that we can associate a set of representatives $\Gamma$ of higher laminations to any multiplication in the Hecke algebra. The non-trivial part is to show minimality, that is, the existence of a unique configuration of flags on the connected components of $t\backslash \Gamma$ (henceforth called the \textit{faces} of $\Gamma$).
	
	Let $\Gamma$ be a representative of a higher lamination. Recall that an assignment of flags to the faces of $\Gamma$ is \textit{valid} if the relative position of any two adjacent flags is the label of the edge between the corresponding faces.
	
	We start with some easy results, the proof of which is left as an exercise for the interested reader.
	\begin{lemma}
		Let $v,w \in W$ such that $vw$ is not reduced. Then there are reduced expressions for $v$ and $w$ of the form $v=v's$ and $w=sw'$ where $s\in S$ is a simple reflection.
	\end{lemma}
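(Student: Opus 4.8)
The plan is to unwind the hypothesis into the length inequality $l(vw) < l(v) + l(w)$ and then to produce a single simple reflection $s\in S$ that is simultaneously a right descent of $v$ and a left descent of $w$: any such $s$ gives reduced expressions $v = v's$ with $l(v') = l(v)-1$ and $w = sw'$ with $l(w') = l(w)-1$, which is exactly the assertion. First I would fix reduced words $v = s_1\cdots s_p$ and $w = t_1\cdots t_q$, so $p = l(v)$ and $q = l(w)$. Because $vw$ is not reduced, the concatenation $s_1\cdots s_p\, t_1\cdots t_q$ is a word of length $p+q > l(vw)$ representing $vw$, hence is not a reduced word for $vw$.

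The next step is to apply the Deletion Condition for Coxeter systems, which says that a non-reduced word equals the word obtained by erasing a suitable pair of its letters. Applied to $s_1\cdots s_p\, t_1\cdots t_q$, it yields two positions whose letters may be deleted. I would rule out that both lie among the $s_i$: cancelling the common suffix $t_1\cdots t_q$ would then present $s_1\cdots s_p$ as an element of length at most $p-2$, contradicting that this word is reduced; the mirror argument, cancelling the prefix $s_1\cdots s_p$, excludes both deletions occurring among the $t_j$. Thus one deleted letter is some $s_i$ and the other some $t_j$, so that
\[
s_1\cdots s_p\, t_1\cdots t_q = s_1\cdots \widehat{s_i}\cdots s_p\; t_1\cdots \widehat{t_j}\cdots t_q .
\]

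The crux, and the step I expect to be the main obstacle, is to promote this cancellation to one taking place at the interface of the two words, that is, to arrange $i = p$ and $j = 1$ so that $s_p = t_1 =: s$ is a genuine common descent. The Deletion Condition only guarantees \emph{some} pair, which may sit far from the junction, so the real work is to slide it there. I would attempt this by combining Matsumoto's theorem (every reduced expression is reachable by braid moves) with the Strong Exchange Condition, rewriting the reduced subwords of $v$ and of $w$ so as to bring the two deleted letters to the boundary between them. An alternative route is induction on $l(w)$: peel off a left descent $t$ of $w$, writing $w = tw'$; if $t$ is also a right descent of $v$ we are done immediately, and otherwise $l(vt) = l(v)+1$ and the product $(vt)w'$ is again non-reduced, so the inductive hypothesis supplies a descent for the pair $(vt, w')$ that one then transfers back to $(v,w)$ using the length formula in the rank-two parabolic subgroup $\langle s,t\rangle$. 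In either formulation the delicate point is precisely this localization/transfer of the descent to the common reflection $s$; once $s$ is exhibited, setting $v' = vs$ and $w' = sw$ completes the argument.
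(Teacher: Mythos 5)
Your reading of the statement is correct---it asserts that some $s\in S$ is simultaneously a right descent of $v$ and a left descent of $w$---and your use of the Deletion Condition is sound as far as it goes: one deleted letter must indeed lie in each of the two reduced subwords. But the step you yourself flag as the main obstacle, namely forcing the deleted pair to sit at the interface so that $i=p$, $j=1$ and $s_p=t_1$, is never actually carried out; both strategies you sketch (braid moves plus Strong Exchange, or induction on $l(w)$ with a transfer through the rank-two parabolic $\langle s,t\rangle$) are left as intentions, and the inductive one breaks exactly where you suspect, since a common descent for the pair $(vt,w')$ need not produce one for $(v,w)$. So the proposal has a genuine gap at its central step. (For what it is worth, the paper offers no proof to compare against: the lemma is explicitly left as an exercise.)

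More importantly, the gap cannot be filled, because the statement is false as written. In $W=\mathfrak{S}_3$ with $S=\{s,t\}$, take $v=w=st$. Then $vw=stst=ts$ has length $2<4=l(v)+l(w)$, so $vw$ is not reduced; yet $st$ is the unique reduced expression of both $v$ and $w$, so the only right descent of $v$ is $t$ while the only left descent of $w$ is $s$, and no common simple reflection exists. Concretely, the Deletion Condition applied to the word $stst$ removes the first and fourth letters, which are $s$ and $t$ respectively and cannot be slid to the junction. What the Strong Exchange Condition does yield is the correct weakening: there is a \emph{reflection} $r$, not necessarily simple (here $r=sts$), with $l(vr)<l(v)$ and $l(rw)<l(w)$. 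Either this weaker form, or an additional hypothesis tailored to the way the lemma is used in \Cref{Coro:case-three}, is needed to obtain a true statement; no proof of the lemma as stated can exist.
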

	
	\begin{coro}\label{Coro:case-three}
		For $s,t \in S$, consider a reduced expression for $w\in W$ of the form $w=w'u$ where $u$ is a word in the letters $s$ and $t$ with maximal length. Denote by $\bar{u}$ the word obtained from $u$ by exchanging $s$ and $t$. Then $w'\bar{u}$ is also reduced.
	\end{coro}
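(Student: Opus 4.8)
The plan is to argue by contradiction, exploiting the maximality of $u$ together with the preceding lemma and the rigid structure of reduced words in the dihedral subgroup $\langle s,t\rangle$. Write the given reduced word as $u=\sigma\tau\sigma\cdots$, of length $\ell$, where $\{\sigma,\tau\}=\{s,t\}$ and $\sigma$ is its first letter; then $\bar{u}=\tau\sigma\tau\cdots$ is again an alternating word in $s,t$, of the same length $\ell\le m_{st}$ but with opposite first letter $\tau$. Being alternating of length at most $m_{st}$, the word $\bar{u}$ is reduced, so what I must show is that $\ell(w'\bar{u})=\ell(w')+\ell$. Suppose instead that $w'\bar{u}$ is not reduced.

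First I would apply the lemma to the pair $(w',\bar{u})$: since $w'\bar{u}$ is not reduced, it produces reduced expressions $w'=w''r$ and $\bar{u}=r\tilde{u}$ for some simple reflection $r\in S$, with $\ell(w')=\ell(w'')+1$ and $\ell(\bar{u})=1+\ell(\tilde{u})$. The key point is then that $r$ must be the first letter of $\bar{u}$, and I would split according to whether $\ell<m_{st}$ or $\ell=m_{st}$.

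If $\ell<m_{st}$, then $\bar{u}$ has length strictly below that of the longest element of $\langle s,t\rangle$, so its reduced expression is unique (no length-$m_{st}$ braid move can apply to a shorter word). Hence $r=\tau$, and prepending $\tau$ to $u$ gives $\tau u=\tau\sigma\tau\cdots$, again alternating of length $\ell+1\le m_{st}$, thus reduced. Combining $w'=w''\tau$ with $w=w'u$ yields $w=w''(\tau u)$, and the length count $\ell(w'')+(\ell+1)=(\ell(w')-1)+(\ell+1)=\ell(w')+\ell=\ell(w)$ shows this is a \emph{reduced} expression of $w$ whose suffix $\tau u$ is a word in $s,t$ of length $\ell+1>\ell$. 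This contradicts the maximality of $u$, closing this case.

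If instead $\ell=m_{st}$, then $u$ is the longest element of $\langle s,t\rangle$, which admits both alternating reduced expressions $\sigma\tau\cdots$ and $\tau\sigma\cdots$ of length $m_{st}$; exchanging $s$ and $t$ sends one to the other, so $\bar{u}=u$ as elements of $W$, whence $w'\bar{u}=w'u=w$ is reduced and there is nothing to prove. I expect the main obstacle to be phrasing these two dihedral facts cleanly: the identification of $r$ with the first letter of $\bar{u}$ (which rests on the uniqueness of reduced words of sub-maximal length in a dihedral group) and the coincidence $\bar{u}=u$ in the top-length case. Both are elementary properties of dihedral Coxeter groups, but they are exactly where the statement's hypothesis on $u$ and the swap $\bar{u}$ interact, so they must be invoked with care.
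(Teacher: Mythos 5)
Your proof is correct. The paper states this result as a corollary whose proof is ``left as an exercise for the interested reader,'' so there is no written proof to compare against; your derivation from the preceding lemma, together with the two dihedral facts you isolate (uniqueness of reduced expressions of length strictly less than $m_{st}$ in $\langle s,t\rangle$, forcing $r=\tau$, and the coincidence $\bar{u}=u$ in $W$ at length $m_{st}$), is exactly the argument the paper's framing suggests, and the length count producing the contradiction with the maximality of $u$ is carried out correctly.
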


	The two other lemmas concern the extension of flag configurations.
	
	\begin{lemma}\label{Lem:flags-opp}
		Given two flags in opposite faces with respect to a vertex a braid type, and whose relative position is compatible with the local structure around the vertex, there is a unique way to associate a valid configuration of flags around the vertex extending the initial data.
	\end{lemma}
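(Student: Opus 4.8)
The plan is to interpret the local picture at a braid-type vertex entirely in the building of $G$, where flags are chambers and relative positions are Weyl distances, and then to reduce the statement to the uniqueness of minimal galleries of a prescribed reduced type.

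First I would fix a local model. A braid-type vertex with labels $s,t$ and $(st)^m=e$ has $2m$ incident edges alternating between type $s$ and type $t$, hence $2m$ faces arranged cyclically around the vertex; write them $F_0,F_1,\dots,F_{2m-1}$ so that the edge between $F_{i-1}$ and $F_i$ carries the label $s$ for $i$ odd and $t$ for $i$ even (indices mod $2m$). The two opposite faces carrying the given flags are then $F_0$ and $F_m$. Reading the labels along the two arcs joining them yields the words $sts\cdots$ and $tst\cdots$, each of length $m$; both are reduced and both represent the longest element $w_{s,t}$ of the dihedral parabolic subgroup $\langle s,t\rangle$. The hypothesis that the relative position of the two given flags is \emph{compatible with the local structure} means precisely that their Weyl distance equals $w_{s,t}$: if it were shorter, no gallery of the reduced type $sts\cdots$ of length $m$ could realize it.

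Next I would record the key building-theoretic input: if $C$ and $D$ are chambers with $\delta(C,D)=w$ and $s_1\cdots s_n$ is any reduced expression for $w$, then there is a unique gallery $C=C_0,C_1,\dots,C_n=D$ with $C_{i-1}$ and $C_i$ distinct and $s_i$-adjacent. This follows step by step from the gate property: the projection of $D$ onto the $s_1$-panel of $C_0$ is the unique chamber of that panel at Weyl distance $s_1 w$ from $D$, which forces $C_1$ and lets one induct on the reduced word $s_2\cdots s_n$ for $s_1 w$. Existence is handled by the same projections, uniqueness by the fact that the projection is the unique closest chamber.

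Applying this to the arc $F_0,F_1,\dots,F_m$ of reduced type $sts\cdots$, with endpoints at Weyl distance $w_{s,t}$, produces a unique valid sequence of intermediate flags $F_1,\dots,F_{m-1}$; applying it to the other arc $F_0,F_{2m-1},\dots,F_m$ of type $tst\cdots$ produces unique flags $F_{m+1},\dots,F_{2m-1}$. Since the two arcs share only the prescribed faces $F_0$ and $F_m$, there is no further compatibility to check between them, and the union is the unique valid configuration around the vertex. I expect the main point to be the gallery-uniqueness statement, together with the bookkeeping that identifies ``compatibility with the local structure'' with the condition that the two given flags sit at maximal dihedral distance $w_{s,t}$, which is exactly what makes the alternating edge types honestly reduced so that the minimal-gallery machinery applies.
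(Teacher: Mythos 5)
Your argument is correct, and it is worth noting that the paper itself gives no proof of this lemma at all: it only remarks that ``in type $A$ this can be verified easily by a computation in $\mathbb{P}^2$'', i.e.\ it gestures at a hands-on check with points and lines for the rank-two parabolics of $\mathfrak{S}_n$ and otherwise leaves the statement to the reader. Your route supplies what is missing, and does so uniformly: you identify $G/B$ with the building of $G$, relative position with the Weyl distance $\delta$, a valid configuration around the braid vertex with a pair of non-stuttering galleries of reduced types $sts\cdots$ and $tst\cdots$ joining the two given chambers, and then invoke the standard fact that for $\delta(C,D)=w$ and any reduced word for $w$ there is a unique gallery of that type from $C$ to $D$ (existence and uniqueness both coming from the gate/projection property of panels, by induction on the word). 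The two bookkeeping points you flag are exactly the ones that need saying: compatibility of the given relative position with the local picture is equivalent to $\delta(F_0,F_m)=w_{s,t}$, since a non-stuttering gallery of reduced type forces the Weyl distance of its endpoints to be the product of the type; and the two arcs meet only in $F_0$ and $F_m$, so the two fillings impose no further constraint on each other. Compared with the paper's implicit case-by-case verification, your proof is more general (it covers all braid vertices of all Weyl groups at once, including $m_{st}=4,6$) and makes transparent why uniqueness holds; its only cost is importing the building formalism, which is anyway the natural home of the flag-counting interpretation used in \Cref{Sec:counting}.
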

	In type $A$ this can be verified easily by a computation in $\mathbb{P}^2$. 
	
	\begin{lemma}\label{Lem:flags-line}
		Given two flags $F_0$ and $F_1$ in relative position $w$ and a reduced word $w = s_{i_1}\cdots s_{i_k}$, there is a unique sequence of flags $F_0 = F_{i_1}, F_{i_2}, ..., F_{i_{k+1}} = F_1$ such that $(F_{i_l},F_{i_{l+1}}) = s_{i_l}$ for all $l=1,...,k$.
	\end{lemma}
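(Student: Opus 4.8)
The plan is to prove both existence and uniqueness by induction on the length $k = l(w)$, peeling off the first simple reflection and reducing each inductive step to the single defining multiplication rule of the Bruhat decomposition. By $G$-equivariance of the relative position, I may assume that $F_0$ is the base flag $B$, so that a flag $F$ with $(F_0,F)=v$ is exactly a point $gB$ with $g\in BvB$; here $BvB$ denotes the Bruhat cell, $v$ abusively standing also for a representative of $v\in W$ in $\mathrm{Norm}(H)$. The base case $k=0$ is immediate, since $(F_0,F_1)=e$ forces $F_1=F_0$. For the inductive step I write $s=s_{i_1}$ and $w'=s_{i_2}\cdots s_{i_k}$, so that $w=sw'$ with $l(w')=l(w)-1$, i.e. $l(sw')=l(w')+1$.

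The key input is the standard fact that, under this length condition, $BsB\cdot Bw'B = Bsw'B = BwB$ (a single cell, not a union of two). For existence I would take any representative $g$ of $F_1$, so $g\in BwB$, factor it as $g=p\,q$ with $p\in BsB$ and $q\in Bw'B$ using this product, and set $F_{i_2}=pB$. Then $(F_0,F_{i_2})=s$ and, after translating by $p^{-1}$, $(F_{i_2},F_1)=(B,qB)=w'$. Applying the induction hypothesis to the pair $(F_{i_2},F_1)$ of relative position $w'$ with the reduced word $s_{i_2}\cdots s_{i_k}$ then produces the remaining flags $F_{i_3},\dots,F_{i_{k+1}}=F_1$, completing the construction of the sequence.

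The hard part is uniqueness, and it all reduces to one application of the composition rule for relative positions. Suppose $F'=p_1B$ and $F''=p_2B$ (with $p_1,p_2\in BsB$) both satisfy $(F_0,\cdot)=s$ and $(\cdot,F_1)=w'$. Since $BsB\cdot BsB = B\sqcup BsB =: P_s$, the product $p_2^{-1}p_1$ lies in the minimal parabolic $P_s$. If $p_2^{-1}p_1\in BsB$, then $(F'',F')=s$; writing $F_1=gB$ with $p_1^{-1}g\in Bw'B$, we get $p_2^{-1}g=(p_2^{-1}p_1)(p_1^{-1}g)\in BsB\cdot Bw'B = Bsw'B$, so $(F'',F_1)=sw'=w$, contradicting $(F'',F_1)=w'$. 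Hence $p_2^{-1}p_1\in B$, i.e. $F'=F''$, and $F_{i_2}$ is unique; uniqueness of the whole sequence then follows from the inductive uniqueness of the tail $(F_{i_2},F_1)$. I expect the only delicate bookkeeping to be the careful use of $G$-invariance to move the base point, together with the verification that the length hypothesis $l(sw')=l(w')+1$ is exactly what guarantees that $BsB\cdot Bw'B$ is the single cell $Bsw'B$ rather than a union of two cells, which is what makes the composition rule deterministic and hence forces uniqueness.
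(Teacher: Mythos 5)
Your proof is correct and complete. Note that the paper itself gives no proof of this lemma: it is listed among the ``easy results, the proof of which is left as an exercise for the interested reader,'' so there is no argument in the text to compare against. What you have written is the standard unique-reduced-gallery argument from the theory of $BN$-pairs, and every step checks out: the reduction to $F_0=B$ by $G$-equivariance, the base case, the cell-product identity $BsB\cdot Bw'B=Bsw'B$ under $l(sw')=l(w')+1$ for existence, and for uniqueness the dichotomy $p_2^{-1}p_1\in B\sqcup BsB$ (using implicitly that $(BsB)^{-1}=BsB$ since $s=s^{-1}$ in $W$) together with the observation that the second alternative would force $(F'',F_1)=sw'=w\neq w'$. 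One could quibble that the lemma is stated for Weyl groups and the flag-variety interpretation, which is exactly the setting of \Cref{Sec:counting}, so your use of the Bruhat decomposition is legitimate; the same argument in fact proves the general statement for any group with a Tits system, or, phrased combinatorially, the uniqueness of minimal galleries of a fixed reduced type in a building. This is a genuine strengthening of the paper's presentation, which asserts the lemma without justification.
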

	
	Any product $h_xh_y$ in $\He$ of elements of the standard basis can be decomposed into elementary moves which are either a quadratic relation or a braid relation, and we have diagrams for these. By juxtaposition we get a collection of graphs corresponding to the product $h_xh_y$. 
	
	\begin{prop}\label{Prop:higher-lam-existence}
		The graphs obtained from expressing $h_xh_y$ in the standard basis of $\He$ are representatives of higher laminations.
	\end{prop}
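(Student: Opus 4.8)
The plan is to verify directly that a representative $\Gamma$ produced by the multiplication algorithm of \Cref{Prop:higher-lam-existence} satisfies the four defining conditions of \Cref{Def:lamination}. Conditions (1) and (2) are immediate from the construction: every vertex of $\Gamma$ is introduced by applying either a quadratic relation (a trivalent ramification point) or a braid relation (a vertex of braid type), and the edges meet the boundary of $t$ transversally at the positions of the letters of the chosen reduced words. For condition (3), the two input sides carry the fixed reduced expressions of $x$ and $y$ by choice, so it remains to control the word read along the output side. Here the point is that the reduction of $h_xh_y$ to the standard basis proceeds by braid moves, which preserve reducedness by \Cref{Coro:case-three}, interspersed with length-reducing quadratic moves, which can only be applied once two identical letters have been brought adjacent — possible precisely when the current word fails to be reduced, by the lemma preceding \Cref{Coro:case-three}. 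Since the algorithm terminates only when no further quadratic move applies, each output word is a reduced expression for the corresponding $z \in W$.

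The heart of the argument is condition (4), minimality, and I would prove it by a \emph{sweep} of $\Gamma$ from the two input sides towards the output side, following the order in which the elementary moves were carried out. Fixing a triple of flags $F_0, F_1, F_2$ at the vertices of $t$, an application of \Cref{Lem:flags-line} to the reduced words on the input sides uniquely determines the flags of all faces meeting those two sides; this is the initial front. Each subsequent move advances the front across one vertex. At a vertex of braid type, the front already carries flags in a pair of opposite faces, and \Cref{Lem:flags-opp} then determines the remaining flags around the vertex uniquely. At a ramification point, the decisive observation is that \emph{no new flag is introduced}: the move $h_sh_s \mapsto 1 + Qh_s$ either caps off two adjacent $s$-edges, closing off the intermediate face and forcing the two outer flags to coincide in the identity term, or merges them into a single $s$-edge through the trivalent vertex, in which case the three incident flags are exactly those already present on the front. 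In every case the advanced front consists of flags uniquely determined by the previous front, hence ultimately by the triple $F_0, F_1, F_2$, which is precisely minimality.

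The main obstacle will be to turn this informal sweep into a rigorous induction: one must show that the planar graph built by juxtaposition always admits an ordering of its vertices compatible with a monotone front, so that when a vertex is processed every flag it needs is already determined. I expect this to follow from the fact that the algorithm itself furnishes such an ordering — the movie of elementary moves reading the product from $h_xh_y$ to its standard form — but the bookkeeping matching the faces of $t\backslash\Gamma$ to the intermediate words, and the verification that the identity term of the quadratic relation is genuinely consistent (the two merging faces really do carry equal flags), will require care. Once minimality is in hand, \Cref{Prop:structure-const-flags} ensures that the valid flag configurations counted by $\Gamma$ are exactly those computing the relevant structure constant, so the graphs are bona fide representatives of higher laminations.
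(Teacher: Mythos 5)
Your argument is correct in substance and uses exactly the same ingredients as the paper's proof --- \Cref{Lem:flags-line} to pin down all boundary faces, \Cref{Lem:flags-opp} at braid vertices, and the observation that a trivalent vertex introduces no new face whose flag would need to be chosen --- but it organizes the induction differently. Where you sweep a front through $\Gamma$ in the temporal order of the algebraic computation, the paper runs an induction on the number of faces of $\Gamma$: it looks at the local structure of $\Gamma$ at the corner carrying $F_1$ (the three cases of \Cref{Fig:three-cases}: simple edge, ramification point, braid vertex), determines the flag $F_1'$ of the face just past that vertex, and restricts $\Gamma$ to the smaller triangle $(F_0,F_1',F_2)$. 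This is precisely the rigorous implementation of your ``monotone front'' and disposes of the ordering problem you flag as the main obstacle, since the vertex nearest the corner is always processable first. The trade-off is that the paper's restriction step creates an obligation your sweep avoids: one must check that the words induced on the sides of the subtriangle are again reduced (so that \Cref{Lem:flags-line} and the induction hypothesis apply), which in the braid case requires first maximizing the number of edges entering the braid vertex and then invoking \Cref{Coro:case-three}. Two small remarks on your sketch: minimality as defined in \Cref{Def:lamination} is only an \emph{at most one} statement, so the consistency of the identity term at a cap (that the two merging faces carry equal flags) is not something you need to verify --- it only affects whether a valid configuration exists, not its uniqueness; and your treatment of conditions (1)--(3), which the paper passes over as immediate, is a welcome addition, the reducedness of the output word being exactly the termination condition of the rewriting process as you say.
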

	
	\begin{figure}[h!]
		\centering
		
		\includegraphics[height=4cm]{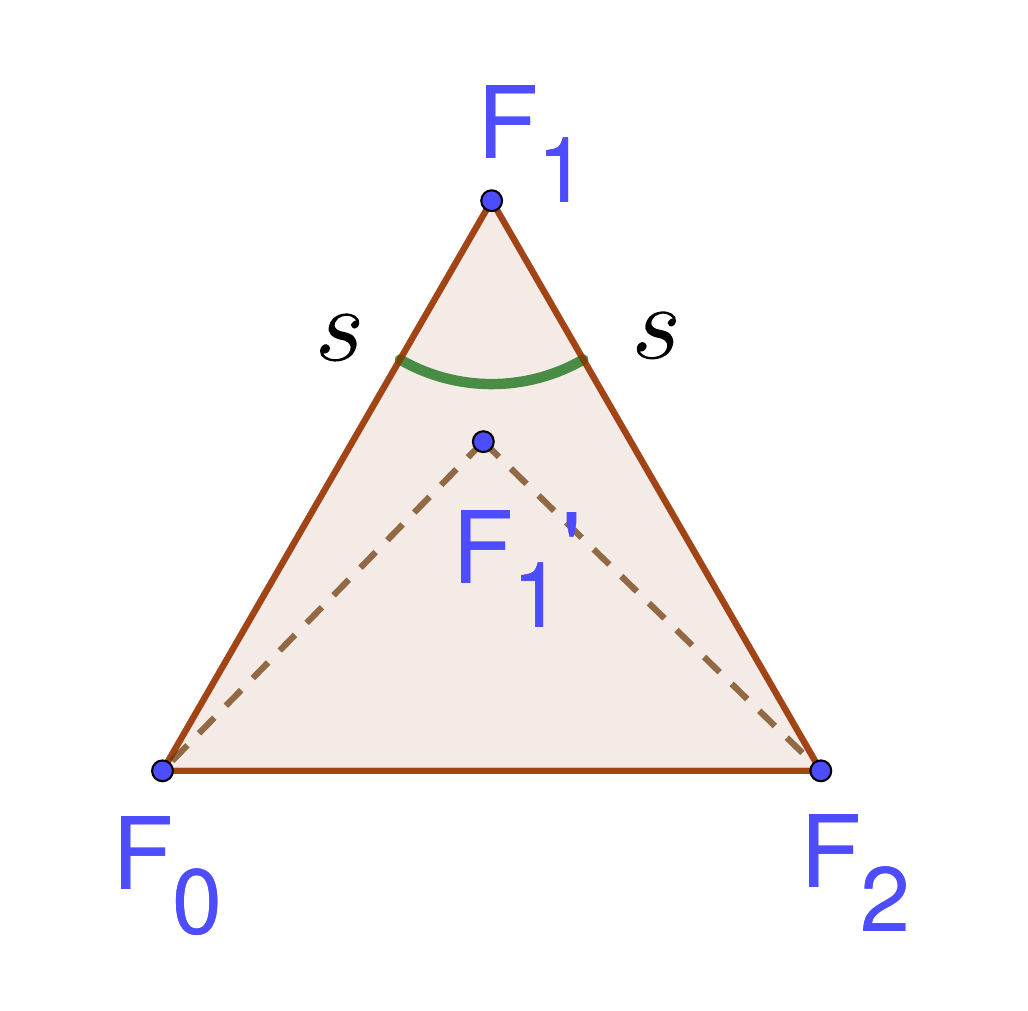} \hspace{0.5cm}
		\includegraphics[height=4cm]{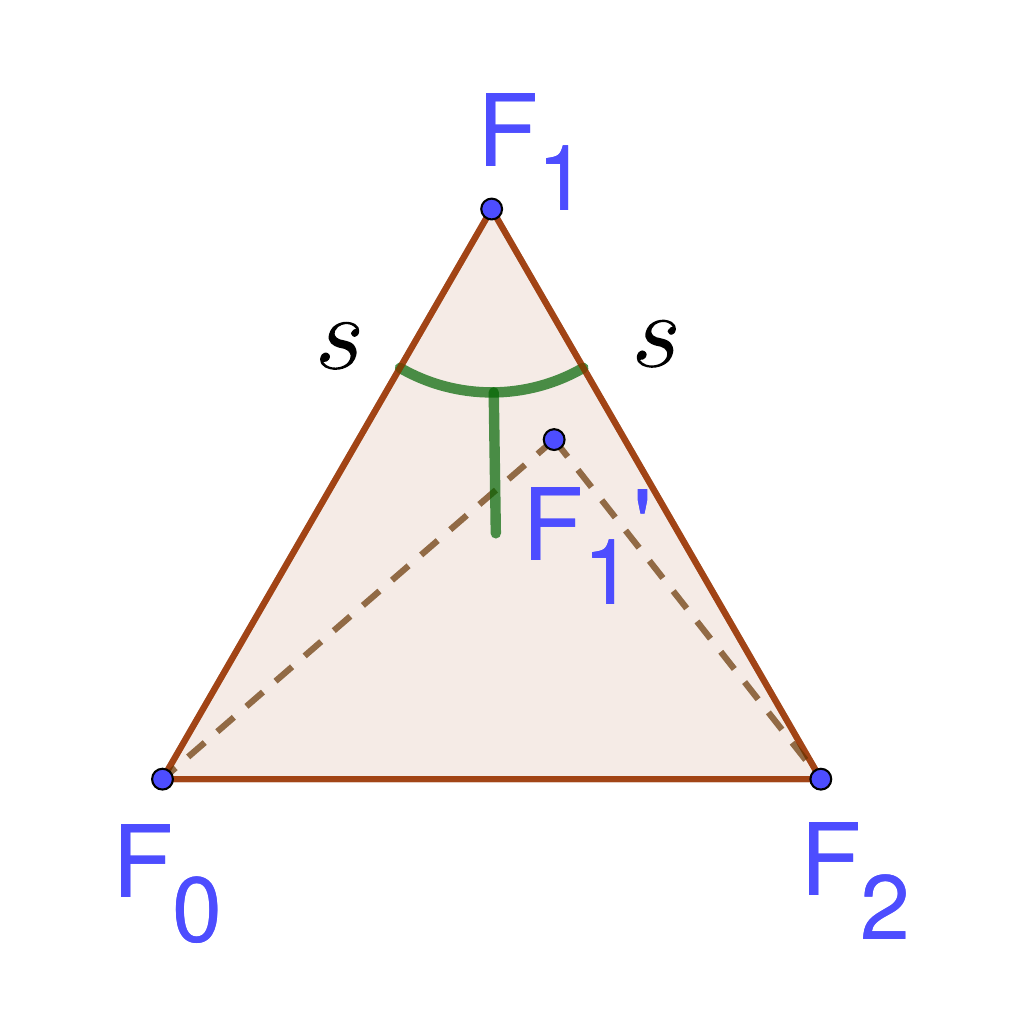} \hspace{0.5cm}
		\includegraphics[height=4cm]{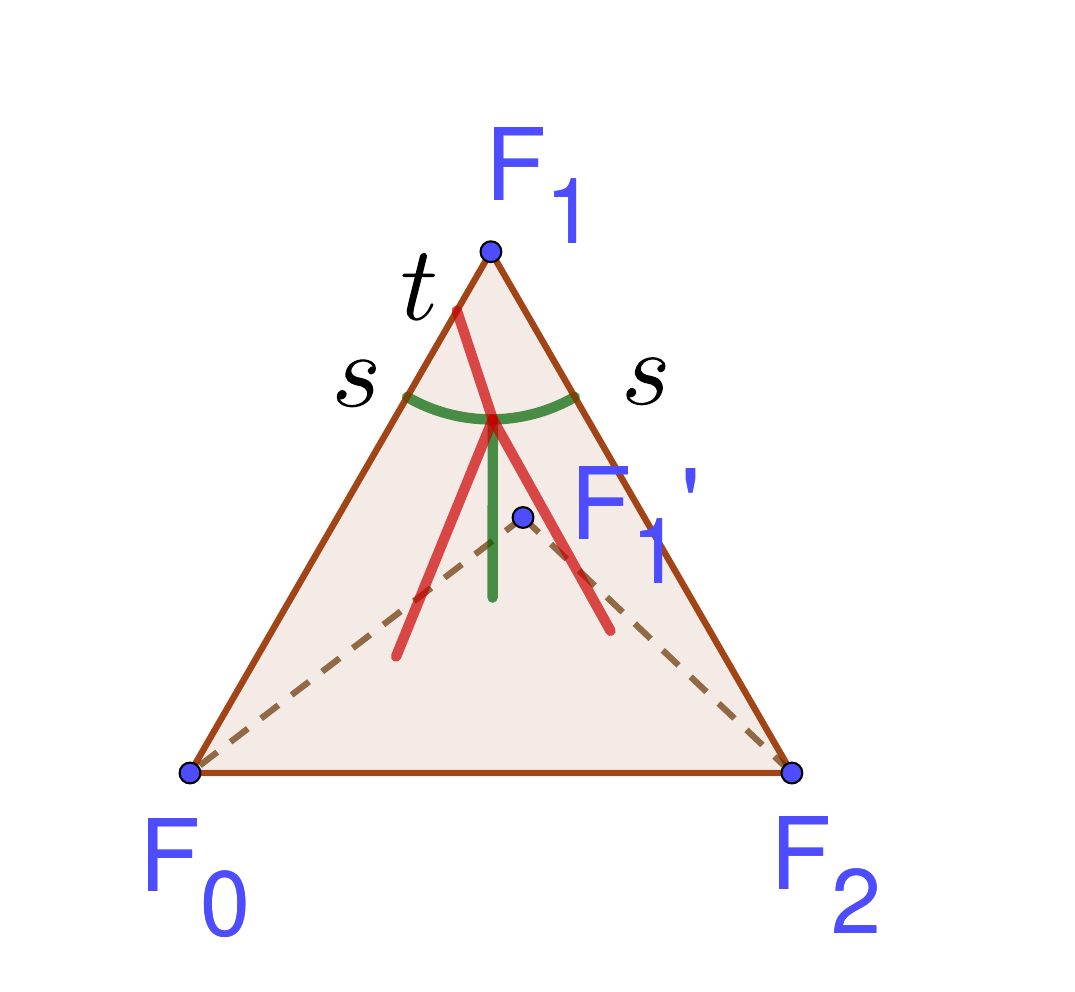}
		
		\caption{Local structure around $F_1$. Left: simple edge. Middle: ramification point. Right: braid vertex.}\label{Fig:three-cases}
	\end{figure}

	\begin{proof}
		Let $x,y\in W$ and $h_x, h_y$ be the corresponding elements in $\He$. The latter can be written as a product of the $h_{s_i}$ corresponding to the elements in $S$. Let us fix such a decomposition for $h_x$ and $h_y$. As one computes the product $h_xh_y$ in $\He$, one uses braid relations and quadratic relations until all the terms one obtains are products of $h_{s_i}$ corresponding to reduced words in $W$. Each quadratic relation increments the number of terms in the expression by one. Let us consider one of the terminal terms and assume it corresponds to $h_z$ for $z \in W$. Let $\Gamma$ be the corresponding graph on a triangle $t$ whose sides are respectively labeled by the reduced words corresponding to $y$, $x$ and $z^{-1}$ as one reads the boundary counterclockwise.

		By \Cref{Prop:structure-const-flags}, we can associate a triple of flags $(F_0, F_1, F_2)$ to the vertices of $t$ such that $(F_2,F_1)=y$, $(F_1,F_0)=x$ and $(F_2,F_0)=z$ in $G\backslash (G/B)^2$.
		
		The only non-trivial fact to check is that $\Gamma$ is minimal, i.e. that we can extend the triple of flags to a unique valid configuration. We prove this by induction on the number of faces of $\Gamma$. The initialisation with one face is trivial since it corresponds to $1\times 1 = 1$. 
		
		In general, by \Cref{Lem:flags-line} we have flags assigned to all boundary faces. We can assume that the local configuration around the vertex with flag $F_1$ is given by one of the three cases shown in \Cref{Fig:three-cases}. This is because away from that vertex, we can only apply braid relations (since $x$ and $y$ are reduced) which amount to choose another reduced expression for $x$ or $y$.
		
		%	the first relation appears around the vertex with flag $F_1$ since on the two sides, we have reduced expressions, so we can only apply braid relations which amount to change the reduced expression for $x$ or $y$.	We then distinguish three cases depending on the local configurations that can occur near the vertex with flag $F_1$ as described in \Cref{Fig:three-cases}.
		
		\medskip
		\underline{Case 1:} There is a simple edge next to $F_1$.
		\smallskip
		
		\noindent The region just this edge $F_1$ already has an associated flag $F_1'$ since it is a boundary face. The restriction of $\Gamma$ to the triangle $(F_0,F_1',F_2)$ has strictly less faces and we can choose the boundary of this new triangle so that it is still a higher lamination: first one can clearly assume that $\Gamma$ intersects the boundary of the new triangle transversely and the sides chosen close to the sides of the original triangle so that the words assigned to them are the words on the sides of the original triangle with the letter corresponding to the edge between $F_1$ and $F_1'$ removed. Thus we can apply the induction hypothesis.
		
		\medskip
		\underline{Case 2:} There is a ramification point next to $F_1$.
		\smallskip
		
		\noindent As in case 1, the two flags below $F_1$ are determined by \Cref{Lem:flags-line}. Let $F_1'$ be one of these flags. The restriction of $\Gamma$ to $(F_0,F_1',F_2)$ has strictly less faces, we can again assume that it is a higher lamination and hence apply the induction hypothesis.
		
		\medskip
		\underline{Case 3:} There is a braid vertex next to $F_1$.
		\smallskip
		
		\noindent Since all boundary faces are already assigned a flag, there are two opposite faces around the braid vertex carrying a a flag and this assignment is consistent with the local structure of $\Gamma$. By \Cref{Lem:flags-opp}, we can uniquely extend this configuration around all the regions around the braid vertex. Let $F_1'$ be the flag below the braid vertex as shown on the right of \Cref{Fig:three-cases}. 
		Let us show that we can apply the induction hypothesis to the restriction of $\Gamma$ to the triangle formed by $(F_0,F_1',F_2)$. The only non-trivial fact to check is that the expressions induced on the edges $(F_0,F_1')$ and $(F_1',F_2)$ are reduced. By changing the reduced expression for $x$ or $y$, we can assume that the number of incoming edges in the braid vertex is maximal.
		The fact that the restriction of $\Gamma$ is reduced then follows from \Cref{Coro:case-three}.
	\end{proof}
	
	Given a representative $\Gamma$ of a higher lamination one wonders how many triples of flags modulo $G$ correspond to it.
	Let $\ram(\Gamma)$ be the number of trivalent vertices of $\Gamma$ and recall that we denoted $l: W\rightarrow \mathbb{N}$ the Bruhat length on $(W,S)$.
	
	\begin{prop}\label{Prop:higher-lam-weight}
		Let $\Gamma$ be a representative of a higher lamination on a triangle $t$ inducing the labels $x, y, z\in W$ on the sides, and $F_0$, $F_2$ two flags in relative position $z$. The number of flags $F_1$ such that $(F_0,F_1,F_2)$ extends to a valid configuration on $t\backslash \Gamma$ is:
		\begin{equation}
		Q^{\ram(\Gamma)}q^{1/2(l(x)+l(y)-l(z))}~.
		\end{equation}
	\end{prop}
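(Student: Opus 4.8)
The plan is to rerun the induction on the number of faces of $\Gamma$ used in the proof of \Cref{Prop:higher-lam-existence}, this time carrying the count along. Write $N(\Gamma)$ for the number of flags $F_1$ completing the given pair $(F_0,F_2)$ to a valid configuration. By the minimality established in \Cref{Prop:higher-lam-existence} each such $F_1$ determines the whole configuration uniquely, so $N(\Gamma)$ is also the number of valid configurations extending $(F_0,F_2)$. The base case is the single face, where $x=y=z=e$ and $N=1=Q^0q^0$. For the inductive step I would peel off the flag $F_1$ at the distinguished vertex and, exactly as before, reduce to one of the three local pictures of \Cref{Fig:three-cases}, producing a lamination $\Gamma'$ with strictly fewer faces; it then suffices to check in each case that the factor $N(\Gamma)/N(\Gamma')$ matches the change in $Q^{\ram}q^{\frac12(l(x)+l(y)-l(z))}$.

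In the simple-edge case the bounding $s$-edge joins the $x$-side to the $y$-side, so passing to $\Gamma'$ deletes one letter $s$ from each of these two words and leaves the $z$-side untouched; hence $\ram$ is unchanged and $l(x)+l(y)-l(z)$ drops by $2$. Since $(F_1,F_1')=s$ and the rest of the boundary is fixed by $\Gamma'$, the admissible $F_1$ over a given reduced configuration are the flags at relative position $s$ from $F_1'$, of which there are $q^{l(s)}=q$, matching $q^{\frac12\cdot2}=q$. In the braid-vertex case \Cref{Lem:flags-opp} gives a unique extension of the two opposite flags, so $N(\Gamma)=N(\Gamma')$; the reduction removes no ramification point, and since a braid relation relates two reduced expressions of equal length of the local dihedral element while the $z$-side is untouched (one invokes \Cref{Coro:case-three} to keep the induced words reduced), it preserves $l(x)+l(y)-l(z)$. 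Both exponents are unchanged, matching the factor $1$.

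The heart of the argument is the ramification-point case. There the trivalent vertex carries three $s$-edges bordering the sector of $F_1$ and of two neighbouring flags $A,B$, with $(F_1,A)=(F_1,B)=(A,B)=s$. Setting $F_1'=A$, the reduction deletes the two edges bounding the $F_1$-sector (the $F_1$–$A$ edge off the $x$-side and the $F_1$–$B$ edge off the $y$-side) and promotes the interior $A$–$B$ edge to the terminal letter of the new $y$-side; the net effect is that $l(x)$ drops by one while $l(y)$ and $l(z)$ are unchanged, so $l(x)+l(y)-l(z)$ drops by exactly $1$ and $\ram$ by $1$. Given the configuration of $\Gamma'$, which fixes both $A=F_1'$ and the adjacent flag $B$ with $(A,B)=s$, the valid choices of $F_1$ are the flags with $(F_1,A)=(F_1,B)=s$: working inside the rank-one group $P_s$, these are the points of $P_s/B\cong\mathbb{P}^1$ distinct from the two points cut out by $A$ and $B$, hence $q-1=Qq^{1/2}$ of them. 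This matches the combined factor $Q\cdot q^{1/2}$ and closes the induction.

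The main obstacle is exactly this last count together with the length bookkeeping: one must verify that a single ramification point contributes the non-monomial $q-1$ rather than $q$ — this is what manufactures the powers of $Q=q^{1/2}-q^{-1/2}$ — and one must check in each case that the change in $l(x)+l(y)-l(z)$ is as claimed, in particular that the braid-vertex reduction is length-neutral. A cleaner but less self-contained route would be to note that $\Gamma$ records a specific reduction of the product $h_xh_y$ in which every ramification point is an instance of the $Qh_s$ branch of $h_s^2=1+Qh_s$ and every cap the $1$ branch, so that the contribution of $\Gamma$ to the normalized structure constant is $Q^{\ram(\Gamma)}$; passing to the geometric basis via $T_w=q^{l(w)/2}h_w$ and invoking \Cref{Prop:structure-const-flags} then multiplies this monomial by $q^{\frac12(l(x)+l(y)-l(z))}$, yielding the stated flag count.
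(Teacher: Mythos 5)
Your inductive argument is essentially the paper's own proof: the same induction on the number of faces of $\Gamma$, the same three local cases from \Cref{Fig:three-cases}, and the same local counts $q$, $q-1=Qq^{1/2}$ and $1$ matched against the changes in $\ram$ and $l(x)+l(y)-l(z)$. (Note that the ``cleaner route'' you sketch at the end would only recover the sum of the weights over all $\Gamma$ with fixed boundary, not the per-graph flag count claimed in the proposition, so the induction is indeed the argument to use.)
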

	The proof is similar to the one for \Cref{Prop:higher-lam-existence}, with a  distinction of the same three cases from \Cref{Fig:three-cases}. Note that by \Cref{Prop:higher-lam-existence}, any contribution from a triple of flags appears at most once.
	
	\begin{proof}
		We reason by induction on the number of faces of $\Gamma$.
		The proposition is true for the empty graph.
		
		\medskip
		\underline{Case 1:} There is a simple edge next to $F_1$.
		\smallskip
		
		\noindent The restriction $\Gamma'$ of $\Gamma$ to the triangle $(F_0,F_1',F_2)$ is a representative of a higher lamination with strictly less faces. We have $\ram(\Gamma') = \ram(\Gamma)$, $z'=z$, $l(x')=l(x)-1$ and $l(y') = l(y)-1$. 
		The only restriction on $F_1$ is its relative position to $F_1'$ which is a simple reflection. If $F_1'$ is fixed this are $q$ flags satisfying this constraint. 
		Using the induction hypothesis, the number of possible $F_1$'s is: 
		\begin{equation}
		q\times Q^{\ram(\Gamma')}q^{1/2(l(x')+l(y')-l(z'))} = Q^{\ram(\Gamma)}q^{1/2(l(x)+l(y)-l(z))}~.
		\end{equation}
		
		\medskip
		\underline{Case 2:} There is a ramification point next to $F_1$.
		\smallskip
		
		\noindent Let $F_1'$ be a flag in one of the regions below $F_1$ (say on the boundary of $y$). The restriction $\Gamma'$ of $\Gamma$ to the triangle $(F_0,F_1',F_2)$ is a representative of a higher lamination with strictly less faces. We have $\ram(\Gamma') = \ram(\Gamma)-1$, $z'=z$, $x'=x$ and $l(y') = l(y)-1$. 
		If $F_1'$ is fixed the, by \Cref{Lem:flags-line} the flag $F_1''$ next to $F_1'$ is uniquely determined. The only restriction on $F_1$ is its relative position to $F_1'$ and $F_1''$. Since these are the same simple reflection, there are $q-1$ possible $F_1$'s. Using the induction hypothesis and the fact that $Q=q^{1/2}-q^{-1/2}$, the number of possible $F_1$'s is: \begin{equation}
		(q-1)\times Q^{\ram(\Gamma')}q^{1/2(l(x')+l(y')-l(z'))} = Q^{\ram(\Gamma)}q^{1/2(l(x)+l(y)-l(z))}~.
		\end{equation}
		
		\medskip
		\underline{Case 3:} There is a braid vertex next to $F_1$.
		\smallskip
		
		\noindent Let $F_1'$ be the flag assigned to the face of $\Gamma$ below $F_1$ as shown in \Cref{Fig:three-cases}. The restriction $\Gamma'$ of $\Gamma$ to the triangle $(F_0,F_1',F_2)$ is a representative of a higher lamination with strictly less faces (as follows from the same reasoning as in the proof of \Cref{Prop:higher-lam-existence}). Furthermore, all parameters $\ram(\Gamma'), l(x'), l(y')$ and $l(z')$ are the same as the ones corresponding to $\Gamma$. If we know the flag $F_1'$, by \Cref{Lem:flags-line} this fixes all boundary flags of the triangle $(F_0,F_1',F_2)$. In particular, this gives two flags in opposite regions around the braid vertex. By \Cref{Lem:flags-opp}, this determines uniquely $F_1$. We conclude by the induction hypothesis.
	\end{proof}
	
	A consequence of this last proposition is that the number of ramification points $\ram(\Gamma)$ does not depend on the representative of the higher lamination. One can easily check that all the relations of \Cref{Sec:graphic-relations} preserve the number of ramification points.

	\subsubsection{Product in the Hecke algebra}
	
	We now describe the graphical computation of a product $h_xh_y$ in the Hecke algebra, for some $x,y\in W$. 
	
	Let us choose reduced expressions for $x$ and $y$ and write them on the two upper sides of the triangle following the counterclockwise orientation of the boundary. For each triple of flags appearing in the product as described in \Cref{Prop:structure-const-flags} we choose a corresponding graph $\Gamma$. By \Cref{Prop:higher-lam-existence}, these graphs represent higher laminations.
	On the last edge of the triangle we read a reduced expression for some element $h_z(\Gamma)\in\He$.
	
	\begin{thm}\label{Thm:Hecke-graphique}
		We have 
		\begin{equation}
		h_xh_y = \sum_{\Gamma} Q^{\ram(\Gamma)} h_z(\Gamma)~,
		\end{equation}
		where the sum runs over all isotopy classes of graphs $\Gamma$ coming from triples of flags associated to $h_xh_y$.
		Assuming \Cref{Conj:complete-relations}, the sum can be taken over all higher laminations inducing $x$ and $y$ on the two upper sides of the triangle.
	\end{thm}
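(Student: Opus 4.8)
The plan is to read off the coefficient of each standard basis vector $h_z$ in $h_xh_y$ from the flag count of \Cref{Prop:structure-const-flags}, and to match it with $\sum_{\Gamma : z(\Gamma)=z} Q^{\ram(\Gamma)}$. First I would fix the normalization between the geometric basis $(T_w)$ and the basis $(h_w)$. Comparing the quadratic relations $T_s^2=(q-1)T_s+q$ and $h_s^2 = Qh_s+1$ forces $h_s = vT_s$, hence $h_w = v^{l(w)}T_w$ for every $w\in W$. Substituting this into $T_xT_y = \sum_z \tensor{C}{_{xy}^{z}}(q)\,T_z$ and $h_xh_y = \sum_z \tensor{c}{_{xy}^{z}}\,h_z$ gives at once
\begin{equation}
\tensor{c}{_{xy}^{z}} = v^{\,l(x)+l(y)-l(z)}\,\tensor{C}{_{xy}^{z}}(q) = q^{-d/2}\,\tensor{C}{_{xy}^{z}}(q),\qquad d:=l(x)+l(y)-l(z).
\end{equation}
So it remains to show $\tensor{C}{_{xy}^{z}}(q) = q^{d/2}\sum_{\Gamma:z(\Gamma)=z}Q^{\ram(\Gamma)}$.

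Next I would bring in the geometry. By \Cref{Prop:structure-const-flags} the quantity $\tensor{C}{_{xy}^{z}}(q)$ is the number of flags $F_1$ with $(F_0,F_1)=x$ and $(F_1,F_2)=y$, where $(F_0,F_2)=z$ is fixed. The construction in the proof of \Cref{Prop:higher-lam-existence} attaches to each such $F_1$ a representative of a higher lamination $\Gamma$ with $z(\Gamma)=z$, and by minimality each representative is responsible for a given triple at most once; thus the valid flags are sorted according to the isotopy class of their associated graph. By \Cref{Prop:higher-lam-weight} the block attached to a fixed $\Gamma$ contains exactly $Q^{\ram(\Gamma)}q^{d/2}$ flags (here $d$ is intrinsic to $\Gamma$ since $l(x),l(y),l(z)$ are the lengths of the boundary words, and one checks $d\equiv\ram(\Gamma)\pmod{2}$ so that $Q^{\ram(\Gamma)}q^{d/2}=q^{(d-\ram(\Gamma))/2}(q-1)^{\ram(\Gamma)}$ is an honest polynomial in $q$). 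Summing over blocks gives $\tensor{C}{_{xy}^{z}}(q)=q^{d/2}\sum_{\Gamma:z(\Gamma)=z}Q^{\ram(\Gamma)}$, and together with the displayed normalization this yields $\tensor{c}{_{xy}^{z}}=\sum_{\Gamma:z(\Gamma)=z}Q^{\ram(\Gamma)}$. Summing over $z$ produces the first identity. As both sides are Laurent polynomials in $v$ and the flag count is valid for every prime power $q$, the identity of polynomials follows from agreement at infinitely many values.

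For the second statement I would invoke \Cref{Conj:complete-relations}. Granting completeness, two graphs realizing the same triple of flags differ by a finite sequence of the relations of \Cref{Sec:graphic-relations}, so the equivalence classes of the graphs produced by the construction are precisely the higher laminations inducing $x$ and $y$. Since $\ram(\Gamma)$ and $z(\Gamma)$ are invariant under all of these relations (as observed right after \Cref{Prop:higher-lam-weight}), the weight $Q^{\ram(\Gamma)}$ and the target $h_{z(\Gamma)}$ descend to higher laminations, and the sum over the isotopy classes from the construction may be rewritten verbatim as a sum over all higher laminations inducing $x$ and $y$.

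The algebra is pure bookkeeping once $h_w = v^{l(w)}T_w$ is fixed, and the substantive content is already packaged in \Cref{Prop:higher-lam-existence} and \Cref{Prop:higher-lam-weight}. The main obstacle is therefore conceptual rather than computational: one must be sure that the valid flags are genuinely partitioned by their associated graph, with no flag realizing two distinct classes and none left unaccounted, which is exactly the combination of existence and the at-most-once clause in those two propositions; and one must accept the completeness of the relations in \Cref{Conj:complete-relations} for the passage from isotopy classes to higher laminations in the second assertion.
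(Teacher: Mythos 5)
Your proposal is correct and follows essentially the same route as the paper: both convert between the geometric basis $(T_w)$ and the normalized basis $(h_w)$ via $T_w=q^{l(w)/2}h_w$, and both combine \Cref{Prop:structure-const-flags} with \Cref{Prop:higher-lam-existence} and \Cref{Prop:higher-lam-weight} to partition the valid flags $F_1$ by their associated graph, each block contributing $Q^{\ram(\Gamma)}q^{(l(x)+l(y)-l(z))/2}$. Your added remarks (the parity check $d\equiv\ram(\Gamma)\pmod 2$ and the passage from counts at prime powers to a polynomial identity) only make explicit what the paper leaves implicit.
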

	
	\begin{proof}
		Recall the structure constants $\tensor{C}{_{xy}^{z}}$ of the Hecke algebra with quadratic relation $T_s^2=(q-1)T_s+q$. Combining \Cref{Prop:structure-const-flags} with \Cref{Prop:higher-lam-weight}, we get
		\begin{equation}
		\begin{split}
		\tensor{C}{_{xy}^{z}} &= \# \{ F_1 \in G/B \mid (F_0,F_1) = x, (F_1,F_2) = y\} \\
		&= \sum_{\Gamma_z} Q^{\ram(\Gamma_z)}q^{1/2(l(x)+l(y)-l(z))}~,
		\end{split}
		\end{equation}
		where the sum is taken over all graphs $\Gamma_z$ which induce $z$ on the third side of the triangle.
		
		Let us now relate the structure constants of $\He$ in the basis $(T_w)_{w\in W}$ to those in the standard basis $(h_w)_{w\in W}$. The two basis are linked by $T_w=q^{l(w)/2}h_w$. Hence:
		\begin{equation}
		\tensor{c}{_{xy}^{z}} = q^{-1/2(l(x)+l(y)-l(z))}\tensor{C}{_{xy}^{z}}~.
		\end{equation}
		Therefore, we get:
		\begin{equation}
		h_xh_y = \sum_{z\in W} \tensor{c}{_{xy}^{z}} h_z = \sum_{z\in W} \sum_{\Gamma_z} Q^{\ram(\Gamma_z)}h_z = \sum_\Gamma Q^{\ram(\Gamma)}h_z(\Gamma)~.
		\end{equation}
		Assuming \Cref{Conj:complete-relations} we can uniquely associate a higher lamination to each triple of flags appearing in $h_xh_y$. 
	\end{proof}
	
	Let us give a concrete example of the graphical interpretation of a product in the Hecke algebra $\He_{(\mf{S}_3,\{s,t\})}$.
	\begin{example}
		Let us multiply $h_{sts}$ with $h_{st}$ in $\He_{(\mf{S}_3,\{s,t\})}$. The direct computation reads:
		\begin{equation}
		\begin{split}
		h_{sts}h_{st} &= h_sh_th_s^2h_t \\
		&= h_sh_t^2 + Q h_sh_th_sh_t \\
		&= h_s + Qh_sh_t + Q h_s^2h_th_s\\
		&= h_s + Q h_{st} + Q h_{ts} + Q^2 h_{sts}~,
		\end{split}
		\end{equation}
		and it corresponds to the graphs of \Cref{Fig:graphicalcalculus}.
		\begin{figure}[h!]
			\centering
			\includegraphics[width=0.8\textwidth, trim=0 100 0 0, clip]{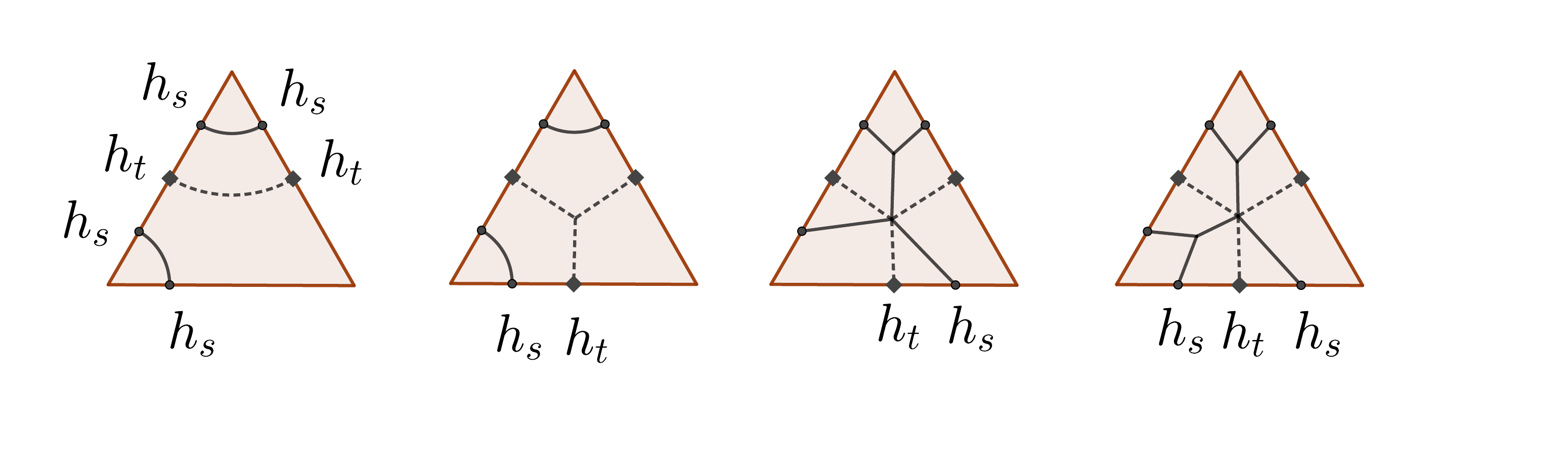}
			\caption{The graphical analogue of the product $h_{sts}h_{st}$.}\label{Fig:graphicalcalculus}
		\end{figure}
	\end{example}

	\begin{Remark}\label{Rem:ramified-cover}
		To a higher lamination $\Gamma$ on a triangle $t$ with set $R$ of ramification points, one can associate a monodromy map $\pi_1(t\backslash R) \to W$ in the following way. To any based loop $\gamma \in \pi_1(t\backslash R)$ which intersects $\Gamma$ transversely one associates the product of all labels of $\gamma \cap \Gamma$ following the orientation of $\gamma$. It is easy to check that it only depends on the homotopy class of $\gamma$.
		
		For higher lamination in type $A_n$, it is then possible to associate to $\Gamma$ an $n$-sheeted cover with simple ramification points at $R$ and a trivialization over each connected component of $t\backslash \Gamma$, such that the transition between two adjacent regions is given by the label on the separating edge.
		This is why we call the trivalent vertices of $\Gamma$ ramification points.
	\end{Remark}

	\subsection{Higher laminations for ciliated surfaces}
	
	We now define higher laminations on surfaces, which provides a diagrammatic way to compute our polynomial invariant. This viewpoint gives a direct proof of the invariance under $q \mapsto q^{-1}$ for a closed surface (this is \Cref{Prop:invariance-poly} below).
	
	Let us consider a ciliated surface $\Sigma$ where each boundary component is labeled by an element of the Weyl group, and fix a triangulation $\mathsf{T}$ of $\Sigma$.
	A \textit{higher lamination} $\Gamma$ on $\Sigma$ is the gluing of representatives of higher laminations on all triangles of $\mathsf{T}$ (their boundary data has of course to coincide).
	
	\begin{Remark}
		It should be possible to define higher laminations without using a triangulation as an equivalence class of minimal labeled graphs inducing the elements of $W$ on the boundary of $\Sigma$, mimicking the definition for triangles. The non-trivial point is to define minimality.
	\end{Remark}
	
	Let us see how to compute our polynomial using the graphical calculus. Draw all possible higher laminations on $\Sigma$ compatible with the boundary data. Recall that $\ram(\Gamma)$ is the number of trivalent vertices of $\Gamma$. As in \Cref{Thm:Hecke-graphique} we fix representatives of the higher laminations associated to triples of flags. Assuming \Cref{Conj:complete-relations}, this choice is irrelevant.
	
	\begin{thm}\label{Thm:poly-via-laminations}
		The polynomial invariant for a ciliated surface $\Sigma$ and a Weyl group $W$ is given by 
		\begin{equation}
		P_{\Sigma,W}(Q) = \sum_\Gamma Q^{\ram(\Gamma)}~,
		\end{equation}
		where the sum runs over all higher laminations of type $W$.
	\end{thm}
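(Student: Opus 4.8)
The plan is to fix one triangulation $\mathsf{T}$ of $\Sigma$, rewrite each structure constant appearing in the state sum \Cref{Eq:defpolynomial} as a sum over higher laminations on a single triangle, and then reorganize the resulting double sum as a sum over higher laminations of the whole surface. Since $P_{\Sigma,W}$ is triangulation-independent by \Cref{Thm:trianginv}, and a higher lamination on $\Sigma$ is by definition glued from per-triangle pieces along $\mathsf{T}$, it suffices to establish the identity for this fixed $\mathsf{T}$.

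First I would isolate the single-triangle statement. For a face $f$ of $\mathsf{T}$ whose three sides carry labels $x,y,z\in W$ read counterclockwise (for a fixed choice of edge orientations), the computation carried out in the proof of \Cref{Thm:Hecke-graphique} — combining the flag count of \Cref{Prop:structure-const-flags} with the weight $Q^{\ram(\Gamma)}q^{\frac12(l(x)+l(y)-l(z))}$ of \Cref{Prop:higher-lam-weight} and the change of basis $T_w=q^{l(w)/2}h_w$ — gives
\begin{equation}
c_f = c_{xyz} = \sum_{\Gamma_f} Q^{\ram(\Gamma_f)},
\end{equation}
where $\Gamma_f$ ranges over the higher laminations on $f$ inducing the reduced words for $x,y,z$ on the three sides. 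Unconditionally this is a sum over isotopy classes of graphs arising from the triples of flags counted in \Cref{Prop:structure-const-flags}; assuming \Cref{Conj:complete-relations} it is exactly the sum over all higher laminations on $f$ with the prescribed boundary, which is the form we need. The orientation-dependent edge labels $c_{x^ay^bz^c}$ are handled by \Cref{prop:revert-orientation} exactly as before \Cref{Thm:trianginv}, so I may read every edge with whichever orientation is convenient.

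Next I would substitute this into \Cref{Eq:defpolynomial} and interchange the finite sum and product:
\begin{equation}
P_{\Sigma,W,\mathsf{T}} = \sum_{\ell}\prod_{f} c_f = \sum_{\ell}\prod_{f}\Big(\sum_{\Gamma_f}Q^{\ram(\Gamma_f)}\Big) = \sum_{\ell}\sum_{(\Gamma_f)_f} Q^{\sum_f \ram(\Gamma_f)},
\end{equation}
where $\ell$ runs over all labelings of the edges of $\mathsf{T}$ by elements of $W$ and, for each $\ell$, the family $(\Gamma_f)_f$ runs over all choices of a per-face lamination $\Gamma_f$ compatible with the labels of $\ell$ on the three sides of $f$. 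The key reindexing is that a pair consisting of a labeling $\ell$ together with a compatible family $(\Gamma_f)_f$ is precisely the data of a higher lamination $\Gamma$ on $\Sigma$: the restriction of $\Gamma$ to each face recovers $\Gamma_f$, and the element of $W$ read off along an internal edge recovers the value of $\ell$ on that edge. Because every ramification point is a trivalent interior vertex of some face, the number of ramification points is additive, $\ram(\Gamma)=\sum_f \ram(\Gamma_f)$, which turns the exponent into $\ram(\Gamma)$ and yields $P_{\Sigma,W,\mathsf{T}}=\sum_\Gamma Q^{\ram(\Gamma)}$; triangulation-independence then upgrades this to the stated $P_{\Sigma,W}(Q)$.

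The step I expect to be the main obstacle is verifying that gluing a family $(\Gamma_f)_f$ with matching boundary labels is genuinely a higher lamination on $\Sigma$ and that the correspondence is a bijection. The subtlety lies at each internal edge shared by two triangles of opposite induced orientation: one face reads the edge label as $w\in W$ and the other as $w^{-1}$. One must check that the leaves of the two pieces meet transversally and consistently there — i.e. that a reduced word and its reverse agree as an unoriented crossing pattern, so the glued graph satisfies the conditions of \Cref{Def:lamination} across the edge. This is exactly the $w\mapsto w^{-1}$ symmetry already exploited in \Cref{Thm:trianginv} and \Cref{prop:revert-orientation}, so once phrased carefully it closes the argument; the only conceptual caveat is that the identification of the per-triangle sums with sums over \emph{all} higher laminations (rather than over laminations coming from triples of flags) rests on \Cref{Conj:complete-relations}.
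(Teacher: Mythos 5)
Your argument is essentially the paper's own proof: both fix a triangulation, invoke \Cref{Thm:Hecke-graphique} to rewrite each face's structure constant as $\sum_{\Gamma_f} Q^{\ram(\Gamma_f)}$, and then exchange the sum over edge labelings with the product over faces to reindex the double sum as a sum over global higher laminations, using additivity of ramification points. The extra points you flag --- the $w\mapsto w^{-1}$ matching along internal edges and the dependence on \Cref{Conj:complete-relations} for identifying flag-derived graphs with all higher laminations --- are exactly the caveats the paper records around the theorem, so your write-up is correct and takes the same route.
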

	\begin{proof}
		Let us consider a triangulation $\mathsf{T}$ of $\Sigma$ and a higher lamination $\Gamma$. The latter allows to associate elements of the standard basis $h_w$ to each edge of $\mathsf{T}$. By \Cref{Thm:Hecke-graphique} we know that for a triangle $t$ the contribution of $\Gamma$ to the structure constant associated to $t$ is given by $Q^{m_t}$ where $m_t$ is the number of ramification points in the triangle. 
		We conclude by the definition of the polynomial invariant:
		\begin{equation}
		\begin{split}
		P_{\Sigma,W}(Q) &= \sum_e \prod_t c_{xyz}(Q) = \sum_e \prod_t \sum_{\Gamma_e} Q^{\ram(\Gamma|_t )} \\
		&= \sum_e \sum_{\Gamma_e} Q^{\ram(\Gamma)} \\
		&= \sum_\Gamma Q^{\ram(\Gamma)}~,
		\end{split}
		\end{equation}
		where $\textstyle\sum_e$ is the sum over all possible labels of the edges of $\mathsf{T}$ by elements of $W$, $\textstyle\prod_t$ is the product over all faces $t$ of $\mathsf{T}$ and $\textstyle\prod_{\Gamma_e}$ is the product over all higher laminations compatible with the labels on the edges of $\mathsf{T}$.
	\end{proof}
	
	Let us see how this works in a simple case:
	\begin{example}
		Let us consider the sphere with three holes $\Sigma_{0,3}$ and the Hecke algebra associated with the Coxeter group $\mf{S}_2$.
		The surface $\Sigma_{0,3}$ is drawn as a triangle in the plane (together with a point at infinity one gets the sphere). We look for all possible higher laminations. 
		
		The following pictures are possible:
		\begin{center}
			\includegraphics[height=5cm]{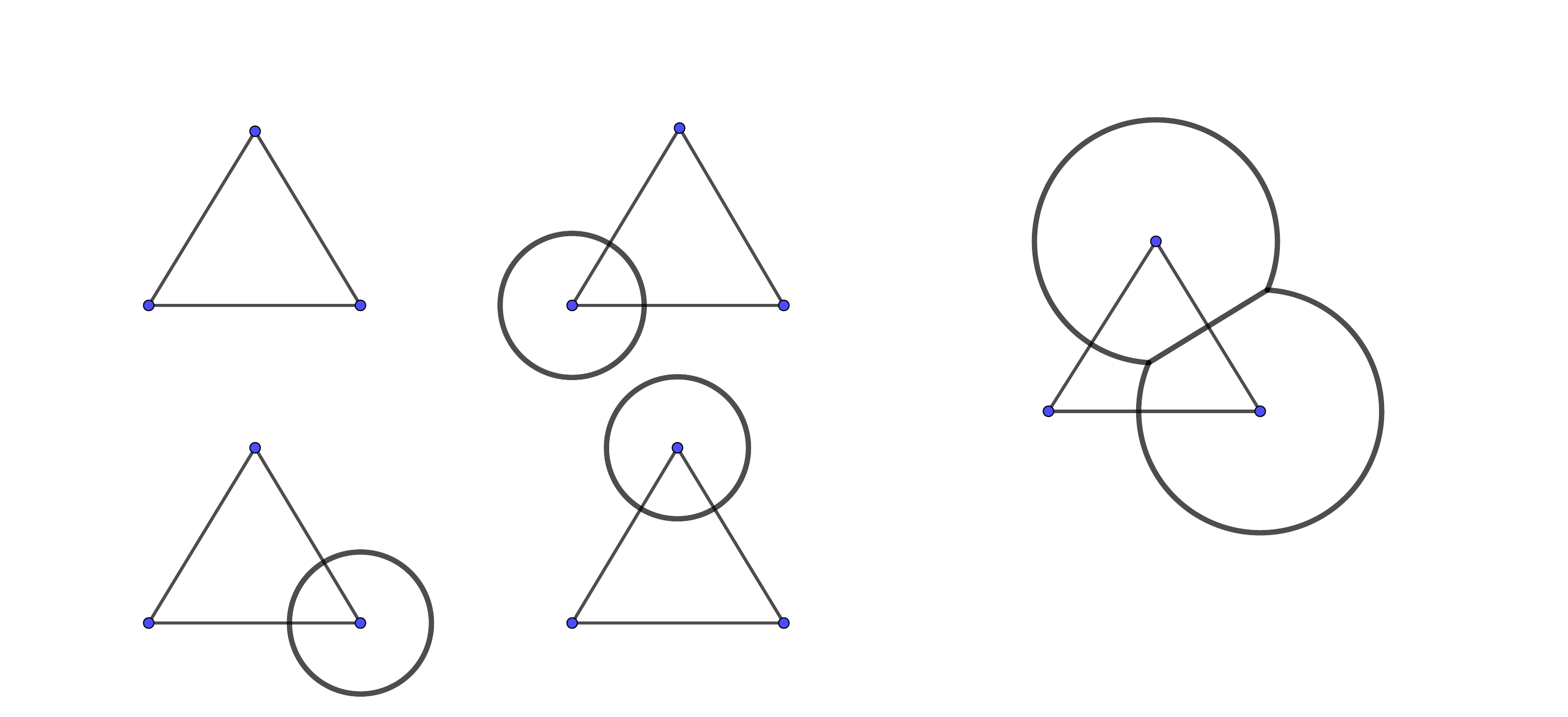}
		\end{center}
		Note that the higher lamination which goes around two vertices of the triangle is the same as the circle around the third vertex, since we are on a sphere.
		
		This shows that our polynomial is given by $4+Q^2 = q+2+q^{-1}$.
	\end{example}
	
	\begin{prop}\label{Prop:invariance-poly}
		For a punctured surface $\Sigma_{g,k}$, the invariant $P_{g,k,W}$ is a polynomial in $q=v^{-2}$. Furthermore, it is invariant under the transformation $q\mapsto q^{-1}$.
	\end{prop}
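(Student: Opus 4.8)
The plan is to derive both assertions from the lamination formula of \Cref{Thm:poly-via-laminations},
$$P_{g,k,W}(Q) = \sum_\Gamma Q^{\ram(\Gamma)}~,$$
where the sum runs over all higher laminations $\Gamma$ on the punctured surface $\Sigma_{g,k}$ and $Q = q^{1/2}-q^{-1/2}$. Both statements become transparent once one controls the parity of $\ram(\Gamma)$. Indeed, expanding $(q^{1/2}-q^{-1/2})^n$ shows that $Q^n$ lies in $\Z[q^{\pm 1}]$ exactly when $n$ is even, and the substitution $q \mapsto q^{-1}$ sends $Q$ to $q^{-1/2}-q^{1/2} = -Q$, so $Q^n \mapsto (-1)^n Q^n$. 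Hence a single monomial $Q^{\ram(\Gamma)}$ is a Laurent polynomial in $q$ invariant under $q \mapsto q^{-1}$ as soon as $\ram(\Gamma)$ is even, and everything reduces to the following claim.

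\textbf{Claim.} On a punctured surface $\Sigma_{g,k}$, which has no cilia and hence no boundary, every higher lamination $\Gamma$ has an even number of ramification points.

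To prove the claim I would argue by the handshake lemma applied to $\Gamma$ viewed as a finite embedded graph. A higher lamination on $\Sigma_{g,k}$ is obtained by gluing laminations on the triangles of a triangulation $\mathsf{T}$; since $\Sigma_{g,k}$ has no cilia, \emph{every} edge of $\mathsf{T}$ is internal, so all the transverse intersection points of the leaves with the edges of $\mathsf{T}$ are matched across adjacent triangles. Consequently $\Gamma$ has no loose ends: its only vertices are the ramification points and the braid vertices, and no leaf terminates at a puncture (a leaf merely winds around it). A ramification point is trivalent, hence of odd degree $3$, whereas a braid vertex of type $m_{st}$ has even degree $2m_{st}$. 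Summing degrees over all vertices yields $2\#E(\Gamma)$, an even number, so $3\,\ram(\Gamma) + \sum_{\text{braid } v} 2m_v$ is even; as the second summand is even, $\ram(\Gamma)$ is even.

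Granting the claim, each term $Q^{\ram(\Gamma)}$ lies in $\Z[q^{\pm 1}]$ and satisfies $(-Q)^{\ram(\Gamma)} = Q^{\ram(\Gamma)}$; summing over $\Gamma$ then shows that $P_{g,k,W}$ is a Laurent polynomial in $q = v^{-2}$ with no half-integer powers, and that it is invariant under $q \mapsto q^{-1}$. The one point I would be careful to justify is precisely the absence of loose ends, namely that leaves do not terminate at the punctures: this is exactly where the hypothesis of no cilia enters, and it is what isolates the ramification points as the sole odd-degree vertices so that the handshake count applies to them alone. On a surface with cilia the leaves do end on the boundary segments, and the parity argument breaks down, consistently with the failure of the $q \mapsto q^{-1}$ symmetry in that case.
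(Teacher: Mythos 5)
Your proof is correct and takes essentially the same route as the paper: the paper also reduces the statement to \Cref{Thm:poly-via-laminations} and observes that on a punctured surface the ramification points are the only odd-degree vertices of a higher lamination, hence even in number, so the invariant is a polynomial in $Q^2 = q - 2 + q^{-1}$. Your write-up merely makes the handshake-lemma count and the absence of loose ends explicit, which the paper leaves implicit.
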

	\begin{proof}
		Using the graphical calculus, we have seen in \Cref{Thm:poly-via-laminations} that 
		\begin{equation}
		P_{g,k,W}(Q) = \sum_\Gamma Q^{\ram(\Gamma)}~,
		\end{equation}
		where the sum runs over all higher laminations.
		
		For a punctured surface, the only vertices of odd degree in a higher lamination are the ramification points. Hence there is an even number of them. Therefore our polynomial is given by a polynomial expression in $Q^2 = q-2+q^{-1}$ which is both a polynomial in $q$ and invariant under $q\mapsto q^{-1}$.
	\end{proof}

	Using the graphical calculus, we can compute the first example of an invariant for a Hecke algebra corresponding to a Coxeter system other than $\mf{S}_2$.
	\begin{example}
		Let us consider $\Sigma_{0,3}$ with $\mf{S}_3$.
		
		If we use only one color, then we are reduced to the case $\mf{S}_2$, so the contributions from one color higher laminations to our polynomial are given by $2(4+Q^2)-1$ (we have to subtract one in order to count the empty higher lamination only once).
		
		Using both colors, here are the possible pictures with their contributions:
		\begin{center}
			\includegraphics[height=6.5cm]{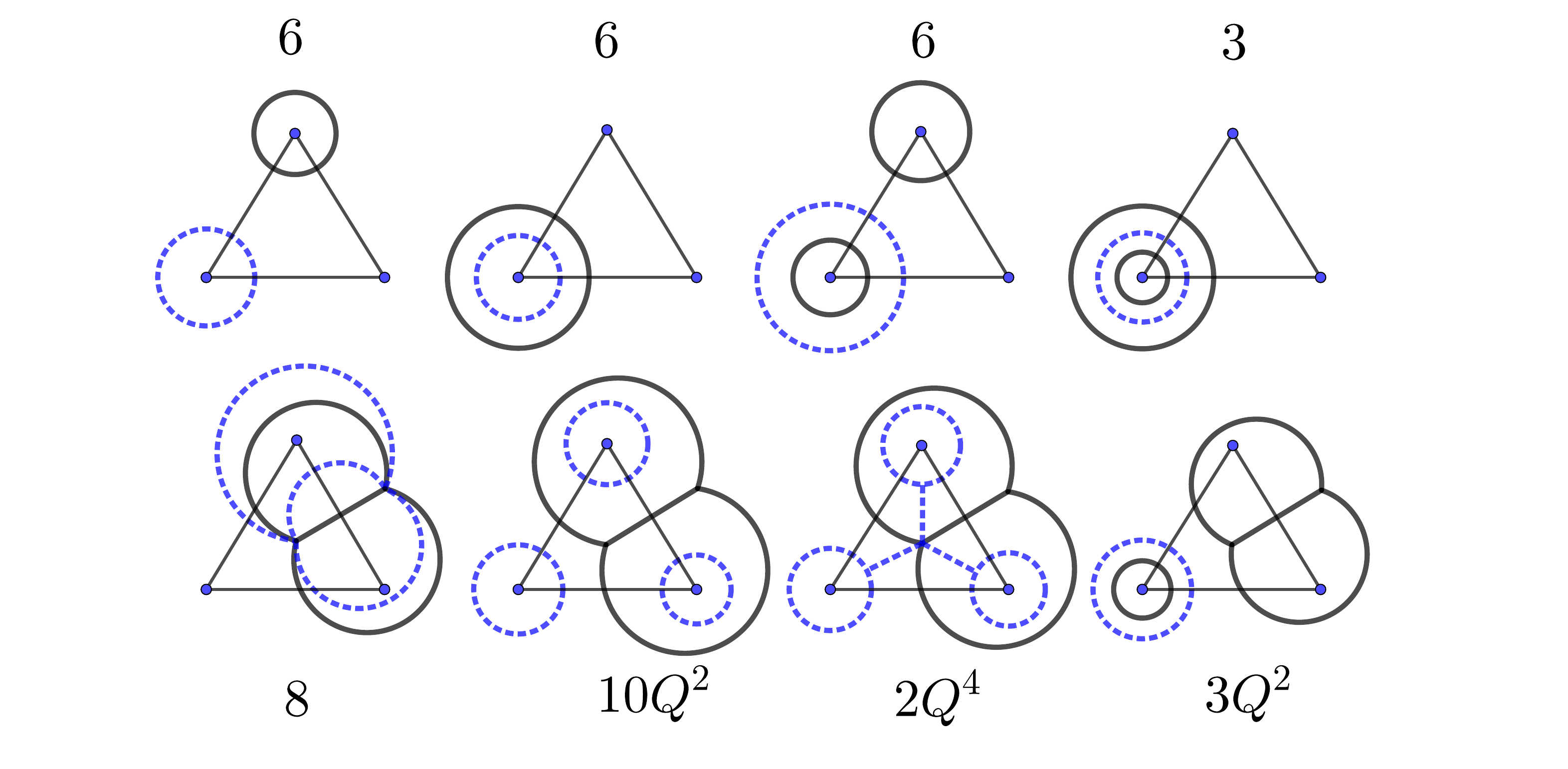}
			
			\vspace{-0.3cm}
			\includegraphics[height=4.6cm, trim= 0 0 0 60, clip]{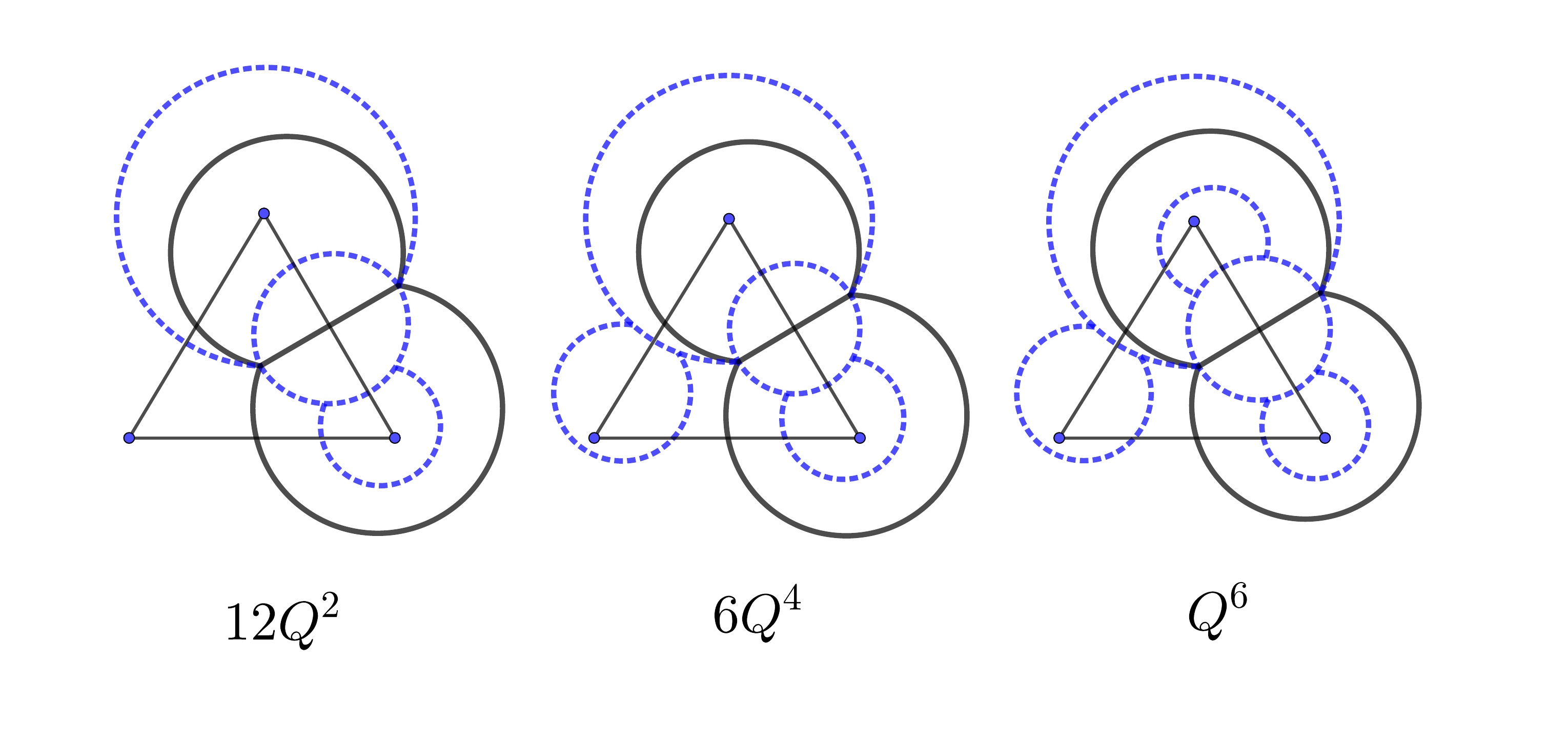}
		\end{center}
		
		The contribution of a picture is given by $Q$ to the power the number of ramification points, times the multiplicity. For example, the fifth picture has no ramification point and multiplicity 8 since for each of the three dashed loops there are two possibilities (going around a vertex of the triangle or not). For the sixth picture the dashed circles can be there or not with the only requirement that there must be at least one otherwise it is a one-color diagram, hence there are $8-1=7$ possibilities. Then colors can be exchanged, but this gives only 3 new possibilities.
		
		Adding up all the terms, we get $$36+27Q^2+8Q^4+Q^6 = q^3+2q^2+10q+10+10q^{-1}+2q^{-2}+q^{-3}.$$
		You see the weakness of the graphical calculus: there is no easy way to check whether all the possible higher laminations have been found.
	\end{example}
	
	\begin{Remark}
		In \Cref{Rem:ramified-cover}, we have seen that higher laminations are linked to ramified covers of special type (some marking and a minimality condition). Our polynomial counts these ramified covers.
	\end{Remark}

	%	To explain the configuration of flags and the ramified coverings in the context of ciliated surfaces, the easiest is to pass to the universal cover. It is convenient to choose some hyperbolic metric on $\S$ such that all marked points $\{P_1,...,P_k\}$ become punctures. The universal cover is then denoted by $\pi : \tilde{\S} \to \S$.
	
	%	A configuration of flags on the marked points of a ciliated surface is a $\pi_1(\S)$-equivariant map $\pi^{-1}(\{P_1,...,P_k\}) \to G/B$. This associates a flag to each marked point and prescribes its transformation under elements of the fundamental group $\pi_1(\S)$.
	%	The minimality of $\Gamma$ is the requirement that there is a unique $\pi_1(\S)$-equivariant map $\pi^{-1}(S\backslash \Gamma) \to G/B$ extending the map on the ramification points such that the relative position of two flags of adjacent regions is given by the label of the edge between them.

	%We have computed the value of our polynomial at $q=1$. This actually counts unramified coverings over the punctured surface. It would be interesting to compute our polynomial at $Q=1$ which counts the number of higher laminations, or equivalently the number of minimal ramified coverings.
	
	%For $\mathfrak{sl}_2$, we can compute our polynomial at $Q=1$. Put $P_{0,k+2,\mathfrak{S}_2}(Q=1) = f_k$. Using the explicit formula, we can deduce 
	%$$f_{k+2} = 5(f_{k+1}-f_k)$$
	%with initial values $f_0 = 2$ and $f_1 = 5$. It would be interesting to find a combinatorial interpretation for this recurrence relation.

	\section{Hecke topological quantum field theory}\label{Sec:TQFT}
	
	In this section we construct the invariants introduced above in a more intrinsic way. Given a Coxeter system $(W,S)$, we construct a $2$-dimensional topological quantum field theory which associates a copy of the Hecke algebra $\He$ or its dual $\He^*$ to topological segments, while ciliated surfaces play the role of cobordisms. Punctured surfaces define elements of the base ring $\mathbb{Z}[v^{\pm1}]$ which are nothing else than the invariants of \Cref{Sec:invariant}.

	\subsection{Triangle invariants and gluing revisited}\label{Sec:trianginvrevisited}
	
	We redefine our construction in more intrinsic terms, showing in particular its independence from any choice of basis in $\He$ and to allow boundary labels to be any elements in $\He$ and not only those of the standard basis.
	
	First, we notice that the non-degenerate pairing given by the trace gives a canonical isomorphism $i$ between $\He$ and $\He^*$. In the standard basis this is given by
		\begin{equation}\label{Eq:isomorphism}
	i: \textstyle\sum_{w\in W} c_w h_w  \mapsto  \textstyle\sum_{w\in W} c_w h_{(w^{-1})}~.   
	\end{equation}
	The isomorphism $i$ is independent of a basis. To see this, take any basis $(A_w)_{w\in W}$ of the $A$-module $\He$. Denote by $(A^w)$ the trace-dual basis, i.e. such that $\tr (A_xA^y) = \delta_x^y$ for all $x,y \in W$. Then 
	\begin{equation}\label{Eq:isomorphism2}
	i: \textstyle\sum_{w\in W} c_w A_w  \mapsto  \textstyle\sum_{w\in W} c_w A^w~ 
	\end{equation}
	which is in accordance with \Cref{Eq:isomorphism} since $h^w = h_{w^{-1}}$.
	
	Second, we can redefine our construction for a triangle, with oriented edges. Choose an orientation of the triangle. To the triangle, we associate the tensor $c \in \He^a\otimes \He^b\otimes \He^c$ which comes from the multiplication in $\He$. Here, $a, b, c \in \{1,*\}$ depending on whether the orientation of the edge is in accordance with the orientation of the triangle or not. Whenever the orientation of an edge is not consistent with the orientation of the triangle, we can use the isomorphism $i$. 
	For example, in \Cref{Fig:trianginv} we get a tensor $c\in \He\otimes\He\otimes \He^*$ which to $h, h'\in \He$ and $g\in \He^*$ associates
$$c(h,h',g) = g(hh') = \tr(hh'i(g))~.$$
Since $c_{xyz}=\tr(h_xh_yh_z)$ we get the link to our initial construction in \Cref{Sec:struccons}.

	\begin{figure}[h!]
		\centering
		\includegraphics[scale=0.8]{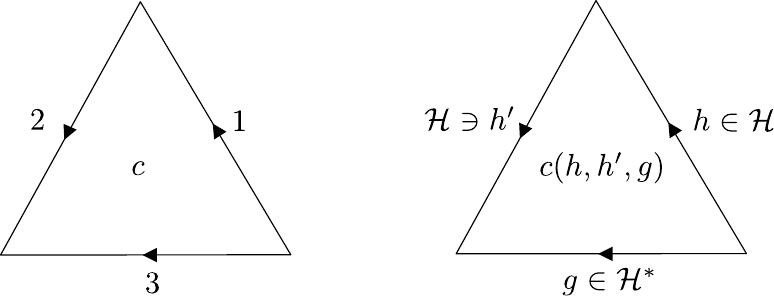}
		\caption{Assigning a tensor to triangles.}\label{Fig:trianginv}
	\end{figure}
	
	Finally, when we glue two triangles along an edge with opposite orientations, we use the natural pairing between $\He^*$ and $\He$. 
	Consider for example the two triangles drawn on the left of \Cref{Fig:gluing}. The upper-one corresponds to the tensor $c_u = c \in \He^*_1\otimes\He^*_2\otimes\He^*_3$ which is the one of \Cref{Eq:metricintui}, and the lower-one, to the corresponding tensor $c_d \in \He_4\otimes\He^*_5\otimes\He^*_6$. The indices tell to which edge the copies of $\He$ or $\He^*$ correspond. The edge $3$ is positively oriented with respect to the upper triangle hence is associated a copy of $\He$ while the edge $4$ is negatively oriented with respect to the lower one and is associated a copy of $\He^*$. Hence we can glue the edges $3$ and $4$ together. 
	
	\begin{figure}[h!]
		\centering
		\includegraphics[scale=0.7]{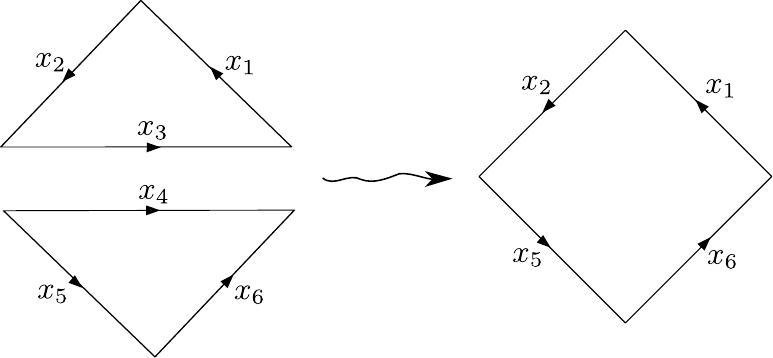}
		\caption{Gluing two triangles along an edge.}\label{Fig:gluing}
	\end{figure}

	It is clear that the gluing does not depend on the orientation of the edge (since the isomorphism $i$ can be used to change this orientation). Since our redefinition is a linear extention of the construction of \Cref{Sec:invpuncsur}, the gluing does not depend on which diagonal of the quadrilateral we choose (by \Cref{Thm:trianginv}).

	One can glue arbitrary ciliated surfaces together along edges in the same way. Since any ciliated surface of our interest admits a triangulation, it is assigned a tensor invariant by gluing the invariants corresponding to the faces of its triangulation. This tensor does not depend on the triangulation since any two triangulations can be related by a finite sequence of flips, hence it is a topological invariant of the ciliated surface.

	\subsection{Hecke topological quantum field theories}
	
	First we describe our category $\mathcal{C}$ of cobordisms: the objects are disjoint unions of oriented segments, and the morphisms from an object $A$ to another object $B$ are the ciliated surfaces whose boundary is the disjoint union of $B$ and $A$ with the orientation reversed.
	
	Let us consider the functor 
	\begin{equation}
	F:\mathcal{C}\rightarrow~ \mathbb{Z}[v^{\pm 1}]-\mathrm{Mod}
	\end{equation}
	which associates a copy of $\He$ to the positively oriented segment, a copy of $\He^*$ to the negatively oriented segment, and the proper tensor product of copies of $\He$ and $\He^*$ to a disjoint union of oriented segments. For $A$ and $B$ two objects in $\mathcal{C}$ and a ciliated surface $\S\in\mathrm{Hom}(A,B)$, the morphism $F(\S)$ is the tensor invariant constructed in \Cref{Sec:trianginvrevisited} which is indeed in $F(\partial \S)$. By \Cref{Thm:trianginv}, we know that this tensor only depends on $\Sigma$ and not on a triangulation.
	
	\begin{thm}\label{Thm:Hecke-TQFT}
		For any finite Coxeter system $(W,S)$ the functor $F$ satisfies the axioms of a topological quantum field theory listed in \cite{atiyah1988topological}.
	\end{thm}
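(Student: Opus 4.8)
The plan is to verify that the functor $F$ satisfies the Atiyah axioms for a $1$-dimensional TQFT one by one, since each axiom has already been essentially established in the preceding construction. First I would recall the axioms from \cite{atiyah1988topological} in the $1$-dimensional setting: functoriality under gluing of cobordisms, involutivity (orientation reversal corresponds to dualization), multiplicativity under disjoint union, the normalization assigning $\mathbb{Z}[v^{\pm1}]$ to the empty $0$-manifold, and the existence of an identity cobordism acting trivially. The bulk of the work consists in matching each axiom to a property we have already proven.

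The key steps proceed as follows. For \textbf{functoriality under gluing}, the composition $F(\S_2)\circ F(\S_1)$ for composable cobordisms must equal $F(\S_2\cup\S_1)$; this is precisely the content of the gluing construction via the pairing $\He^*\otimes\He\to\mathbb{Z}[v^{\pm1}]$ together with \Cref{Eq:changeofdiag}, which guarantees the resulting tensor is independent of the triangulation and hence well-defined on the glued surface. For \textbf{involutivity}, I would observe that reversing the orientation of a segment exchanges the copy of $\He$ with $\He^*$ by definition of $F$, and on morphisms the symmetry of the trace pairing of \Cref{Eq:metricdef} together with \Cref{prop:revert-orientation} shows that $F$ of the orientation-reversed cobordism is the adjoint (dual) of $F(\S)$; \Cref{Thm:trianginv} already records that the invariant is preserved under reversing orientation with inversion of the boundary data. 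For \textbf{multiplicativity}, the disjoint union of segments is sent to the tensor product of the corresponding copies of $\He$ or $\He^*$ by construction, and a disjoint union of cobordisms yields the tensor product of the associated invariants, directly from the state-sum definition \Cref{Eq:defpolynomial}. The \textbf{identity} axiom requires that the cylinder (a boundary segment times an interval, triangulated into two triangles) act as the identity endomorphism of $\He$; I would check this by an explicit computation: gluing the two triangles of a bigon-type quadrilateral with the trace contracts, using \Cref{Eq:invofquadri} and the non-degeneracy of $g$, to $g^{mn}c_{imk}$ which equals $\delta_i^k$ after applying \Cref{Eq:trace-hx}, confirming that the identity cobordism induces $\id_\He$.

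The main obstacle, and the point demanding genuine care rather than citation, is the \textbf{non-commutative} and non-closed nature of the $1$-manifolds: since the boundary segments are ordered (the cilia break the circle into a sequence of arcs), the tensor products $\He^{\otimes a}\otimes(\He^*)^{\otimes b}$ carry a linear order on their factors, and one must check that gluing respects this ordering and that the identity and associativity coherences hold as stated for an ordered, rather than symmetric, monoidal structure. I expect the delicate bookkeeping to lie in verifying that the cyclic symmetry of $c_{ijk}$ (\Cref{Prop:cyclicinv}) combined with \Cref{Eq:changeoforien} suffices to make the assignment of $F(\S)$ to a cobordism genuinely independent of how the boundary is cut into an ordered tuple of arcs, so that composition is strictly associative as a functor. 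Once this ordering compatibility is confirmed, the remaining axioms follow formally from the triangulation-independence already established, and the theorem is complete.
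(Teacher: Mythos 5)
Your proposal is correct and follows essentially the same route as the paper, which simply observes that invariance under orientation-preserving diffeomorphisms, involutivity, and multiplicativity are all built into the construction of \Cref{Sec:trianginvrevisited}; you spell out the same verification axiom by axiom, with the paper's two-line proof leaving implicit what you make explicit. In fact you go somewhat further than the paper by addressing the identity (cylinder) axiom and the ordering of boundary arcs in the non-symmetric monoidal setting, points the paper's proof does not mention at all (note only that the bigon realizing the identity cobordism falls outside the class of triangulable ciliated surfaces as defined in \Cref{Sec:ciliated-surface}, so its invariant $\tr(C_iC^k)=\delta_i^k$ has to be taken from the $n$-gon trace formula rather than from a genuine two-triangle decomposition).
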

	
	\begin{proof}
		Our construction makes clear that $F$ is invariant under orientation preserving diffeomorphisms of ciliated surfaces and their boundary, that it is involutory, and that it is multiplicative.
	\end{proof}
	
	Let $\Sigma$ be a ciliated surface without cilia and hence without boundary. Then $F(\S)\in\mathbb{Z}[v^{\pm1}]$, that is, it a Laurent polynomial in $v$. It clearly coincides with the invariant for punctured surfaces defined in \Cref{Sec:invpuncsur}.

	\subsection{Invariants of $n$-gons}
	
	Since our construction is a TQFT, i.e. behaves well under gluing, we can decompose a ciliated surface into elementary parts. Representing a surface as the gluing of the edges of a polygon, we get simple expressions for the invariant.
	
	Consider an arbitrary basis $(C_w)_{w\in W}$ of the $A$-module $\He$ and let $(C^w)_{w\in W}$ the trace-dual basis in $\He$.	
	The tensor associated to a polygon is simply given by
	\begin{prop}\label{Prop:polygon-invariant}
	Consider a polygon with $n$ edges, all oriented in the same way. Then the tensor $c_n$ associated to the polygon is given by
	\begin{equation}
	c_n = \tr(C^{w_1}\cdots C^{w_n}) C_{w_1}\otimes \cdots \otimes C_{w_n} ~.
	\end{equation}
	\end{prop}
	Note that this is equivalent to: the tensor of a polygon associates to $(h_1, ..., h_n) \in \He^n$ (one label for each edge) the scalar $\tr(h_1\cdots h_n)\in \Z[v^{\pm 1}]$.
	
	\begin{proof}
	We use induction on $n$. For $n=3$ the formula is true by the definition of our invariant. For the passage from $n$ to $n+1$, decompose a $(n+1)$-gon into one triangle and an $n$-gon (see \Cref{Fig:polygon-dissection}. We then get:
	\begin{align*}
	c_{n+1} &= \left\langle \tr(C^{w_1}\cdots C^{w_n'})\, C_{w_1}\otimes \cdots \otimes C_{w_n'}, \tr(C^{w_n}C^{w_{n+1}}C_{w_n'})\,C_{w_n}\otimes C_{w_{n+1}}\otimes C^{w_n'}\right\rangle \\
	&= \tr(C^{w_1}\cdots C^{w_n'})\tr(C^{w_n}C^{w_{n+1}}C_{w_n''})\, C^{w_n'}(C_{w_n'})\, C_{w_1}\otimes \cdots \otimes C_{w_n}\otimes C_{w_{n+1}} \\
	&= \tr(C^{w_1}\cdots C^{w_n}C^{w_{n+1}})\, C_{w_1}\otimes \cdots \otimes C_{w_n}\otimes C_{w_{n+1}}~, 
	\end{align*}
	where we used the induction hypothesis in the first line, the gluing formula in the second, and the contraction of indices property of tensors in the last line.
	\end{proof}
	
	\begin{figure}[h!]
	\centering
	\includegraphics[height=3.7cm, trim=0 80 0 110, clip]{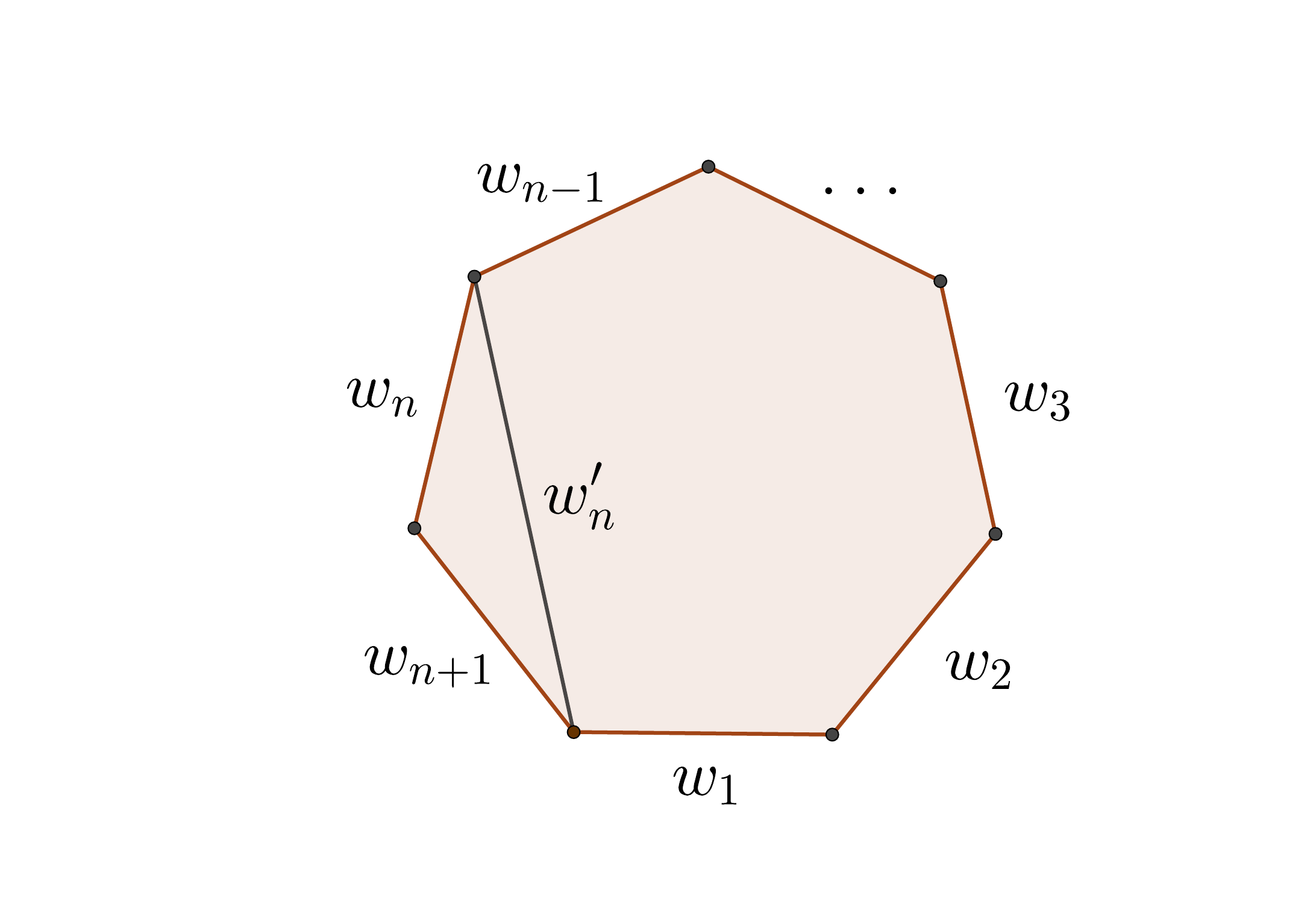}
	\caption{Decomposition of a polygon.}\label{Fig:polygon-dissection}
	\end{figure}
	
	As a consequence, we get a simple formula for the invariant of a ciliated surface without boundary components (only marked points):
	\begin{coro}
	The polynomial invariant for a surface $\S_{g,k}$ (where $k\geq 1$) is given by
	\begin{equation}\label{easyexpr}
	P_{g,k,W} = \tr \;(C_wC^w)^{k-1}(C_xC_yC^xC^y)^{g}~.
	\end{equation}
	\end{coro}
	
	\begin{proof}
	A surface $\S_{g,k}$ can be obtained as the gluing of a polygon with $4g+k-1$ edges (like in \Cref{Fig:q1-1} for $k=1$). The first $4g$ edges add handles, while the last $k-1$ edges add cones (so a marked point).
	By \Cref{Prop:polygon-invariant}, the gluing property and contraction, we get the result.
	\end{proof}
	
	\begin{Remark}
		There are lots of possible expressions, all equivalent, for a given surface $\S_{g,k}$, one for each gluing of a polygon giving $\S_{g,k}$.
	\end{Remark}

	\begin{example}\label{Ex:sphere3trace}
		Consider the sphere with three punctures, represented by the gluing of two triangles as in \Cref{Fig:sphere-reading}. Reading the picture yields a map $\bb{Z}[v^{\pm 1}] \rightarrow \bb{Z}[v^{\pm 1}]$ given by
		\begin{equation}
		1 \mapsto \tr(C^xC^yC^z)\tr(C_xC_zC_y)= \tr(C^xX^yC_yC_x)~.
		\end{equation}
		This gives the same result as representing the sphere as the gluing of a square.
	\end{example}
	\begin{figure}[h!]
		\centering
		\includegraphics[height=5.5cm, trim= 0 10 0 10, clip]{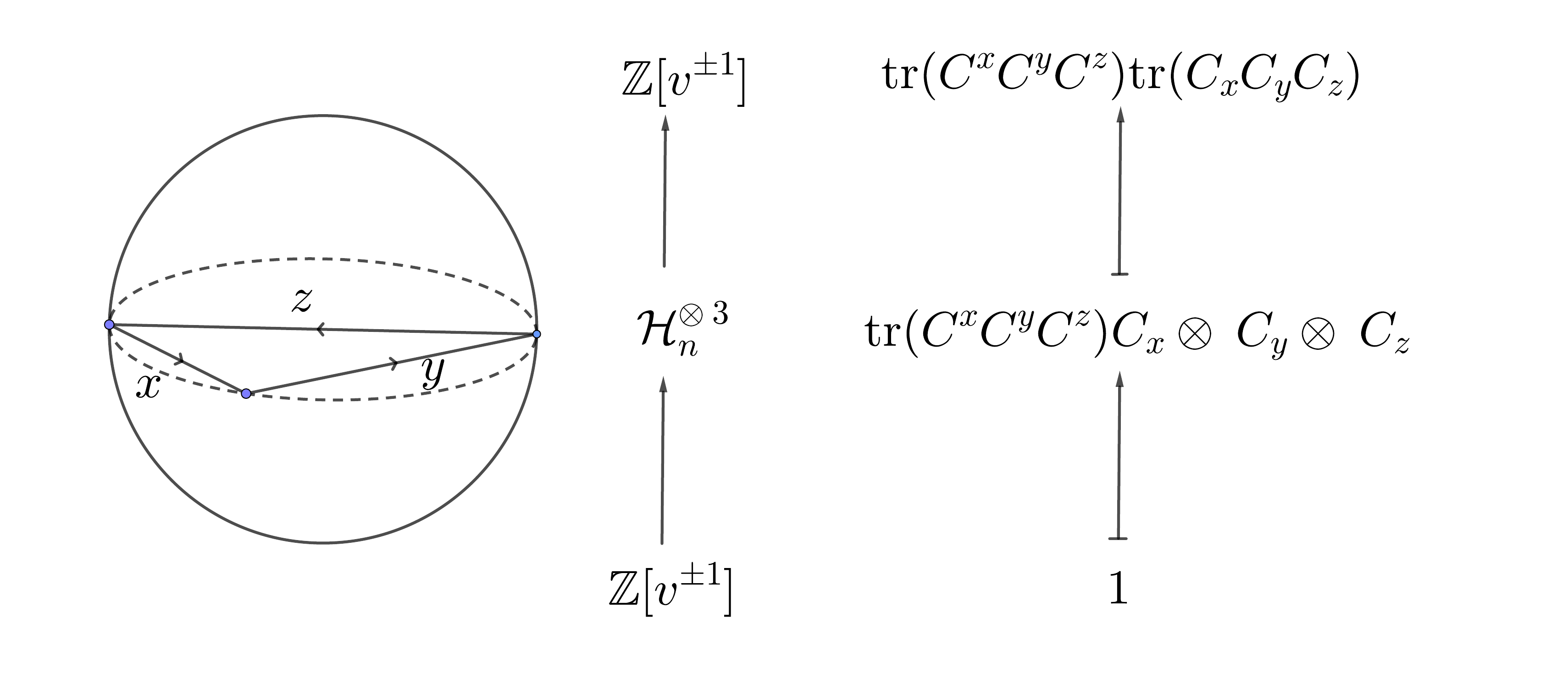}
		\caption{Reading off the polynomial}\label{Fig:sphere-reading}
	\end{figure}

	\section{Explicit expression and positivity}
	
	In this section, we explicitly compute the polynomial invariant for ciliated surfaces and determine in which cases positivity hold.
	The key observation is that for a punctured surface, the polynomial is the trace of a central element in the Hecke algebra $\He$. The structure of the center can be understood using the Wedderburn decomposition of $\He$. The polynomial is a sum of powers of Schur elements multiplied by irreducible characters.

	\subsection{Schur elements and Wedderburn decomposition}
	
	The Hecke algebra with its standard trace is a specific example of a symmetric algebra, that is, an algebra with a non-degenerate trace function. There is a general theory of these algebras which we present briefly here. In particular we expose some general facts about Schur elements and the Wedderburn decomposition for Hecke algebras. Main references are \cite{geck2000characters}, Chapter 7 and 8 of \cite{Maria-HDR} and the article \cite{neunhoffer2006kazhdan}.
	
\paragraph{Symmetric algebras.}
Fix a commutative integral domain $A$. 
A \textbf{symmetric algebra} $H$ is an $A$-algebra, which is free and finitely generated over $A$, equipped with a symmetric, non-degenerate trace $\tau: H \rightarrow A$. Symmetric means that $\tau(hh') = \tau(h'h) \;\forall h, h' \in H$.
The trace gives an isomorphism $H \cong H^* = \Hom_A(H,A)$ via $h\in H \mapsto \tau(h\; .\;)$. Denote by $B$ the associated quadratic form $B(h_1,h_2) = \tau(h_1h_2)$.

Denote by $T(H) \simeq (H/[H,H])^*$ the space of traces on $H$. 
Note that the space of traces $T(H)$ is canonically isomorphic to the center $Z(H)$ of $H$ (see for example \cite[Proposition 7.1.7]{geck2000characters}).

We further assume $H$ semisimple. Note that if $A=\R$ and $B$ is positive definite, the algebra $H$ is automatically semisimple since an orthogonal complement to a left ideal of $H$ is a right ideal and \textit{vice versa}.

Denote by $\mathrm{Irr}(H)$ the set of irreducible representations of $H$ and for $\lambda\in \mathrm{Irr}(H)$ denote by the same letter the map $\lambda : H\to \mathrm{End(V_\lambda)}$. Then the semisimplicity of $H$ implies that
\begin{equation}\label{Eq:Wedderburn}
    H = \bigoplus_{\lambda \in \mathrm{Irr}(H)} \mathrm{End(V_\lambda)} ~.
\end{equation} 

This is the Artin--Wedderburn decomposition, or a version of the Peter--Weyl theorem in this setting. 
	
For any representation $\lambda$ of $H$ one can associate a trace called a character denoted by $\chi_\lambda\in T(H)$, defined by $\chi_\lambda(h)=\mathrm{tr} \lambda(h)$ and a corresponding element $Z_\lambda$ of the center $Z(H)$. The sets $\{\chi_\lambda\}$ and $\{Z_\lambda\}$ for $\lambda\in \mathrm{Irr}(H)$ form orthogonal bases in the spaces $T(H)$ and $Z(H)$, respectively. Denote by  $s_\lambda$ the inverses the coefficients of the decomposition of unity in $H$ with respect to the base of the $Z_\l$:
\begin{equation}\label{Eq:Schur-def-2}
    \textstyle\sum_\l \frac{1}{s_\lambda}Z_\lambda = 1 ~.
\end{equation}
The $s_\l$ are called \textbf{Schur elements}. From \Cref{Eq:Schur-def-2} we immediately get
\begin{equation}
    \tau = \textstyle\sum_\l \frac{1}{s_\l}\chi_\l ~.
\end{equation}
Further, by \Cref{Eq:Wedderburn}, we see that $Z_\l$ acts on $V_\l$ as $s_\l \id$ (which is the usual definition of the Schur elements).
The elements $Z_\l$ can be computed as follows:
\begin{prop}\label{Prop:Zlambda}
Let $(C_w)_{w\in W}$ be a basis of $H$ and let $(C^w)$ be its trace-dual basis in $H$. The central element $Z_\l$ can be computed by
    \begin{equation}
        	Z_\l = \sum_{w\in W} \chi_\l(C_w)C^w~.
    \end{equation}
\end{prop}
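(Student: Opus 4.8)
The plan is to characterize $Z_\l$ intrinsically and then verify that the proposed formula meets that characterization. Recall that $Z_\l$ is defined as the image of the character $\chi_\l$ under the isomorphism $T(H) \simeq Z(H)$ whose inverse sends $z \in Z(H)$ to the trace $h \mapsto \tau(zh)$. Combined with the nondegeneracy of $B_\tau$, this means that $Z_\l$ is the \emph{unique} element $z \in H$ satisfying $\tau(zh) = \chi_\l(h)$ for all $h \in H$: uniqueness holds on all of $H$ by nondegeneracy, and the fact that the solution automatically lies in $Z(H)$ is precisely the content of the isomorphism $T(H) \simeq Z(H)$. So it suffices to prove that the right-hand side of the proposition fulfils this trace identity.

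First I would set $Z := \sum_{w \in W} \chi_\l(C_w) C^w$ and test the identity on the basis $(C_v)_{v \in W}$, which is enough by linearity. Using linearity of $\tau$ and then the trace-dual relation together with the trace property $\tau(ab)=\tau(ba)$, I compute
\begin{equation}
\tau(Z C_v) = \sum_{w \in W} \chi_\l(C_w)\, \tau(C^w C_v) = \sum_{w \in W} \chi_\l(C_w)\, \tau(C_v C^w) = \sum_{w \in W} \chi_\l(C_w)\, \delta_v^w = \chi_\l(C_v).
\end{equation}
By linearity this extends to $\tau(Zh) = \chi_\l(h)$ for every $h \in H$, and the uniqueness noted above forces $Z = Z_\l$.

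An equivalent route, closer to the explicit description of the isomorphism given in the text, is to identify the inverse form $B_\tau^{-1} \in H \otimes H$ with the canonical (Casimir-type) element $\sum_{w} C^w \otimes C_w$. A short check that $(1 \otimes t)\bigl(\sum_w C^w \otimes C_w\bigr) = \sum_w \tau(z C_w)\, C^w = z$ whenever $t = \tau(z\,\cdot\,)$ confirms that this is indeed the inverse element, and then $Z_\l = (1 \otimes \chi_\l) B_\tau^{-1}$ yields the stated formula immediately.

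I do not expect any serious obstacle here: once the correct characterization of $Z_\l$ is in place, the verification is essentially one line. The only point requiring care is the bookkeeping of the order of the factors and of which basis carries the character, since the equality $\tau(C^w C_v) = \tau(C_v C^w) = \delta_v^w$ rests on $\tau$ being a trace; getting this backwards would pair $\chi_\l$ with the dual basis instead. A secondary subtlety is that one should invoke the $T(H) \simeq Z(H)$ isomorphism to guarantee that the unique solution of the trace identity is central, rather than checking centrality of $Z$ by hand, so that it legitimately equals $Z_\l$.
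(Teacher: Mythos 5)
Your proposal is correct and follows essentially the same route as the paper: both verify that $\sum_{w}\chi_\l(C_w)C^w$ satisfies the defining identity $\tau(\,\cdot\,h)=\chi_\l(h)$ (you by testing on the basis $(C_v)$ via $\tau(C_vC^w)=\delta_v^w$, the paper by expanding $h=\tau(C^wh)C_w$) and then conclude by non-degeneracy of the trace form. The remarks on centrality via $T(H)\simeq Z(H)$ and the Casimir-element reformulation are sound but add nothing beyond the paper's argument.
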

	\begin{proof}
	    For $h\in H$, we compute (using Einstein summation convention):
	    \begin{equation}
	        \tau(Z_\l h) = \chi_\l(h) = \chi_\l(\tau(C^wh)C_w) = \tau(C^wh)\chi_\l(C_w) = \tau(\chi_\l(C_w)C^wh)
	    \end{equation}
	    where we used that $h=\tau(C^wh)C_w$ by definition of $C^w$.
	    Hence the proposition follows by the non-degeneracy of $\tau$.
	\end{proof}

	There is another definition of Schur elements which will be useful in the sequel. Let $\varphi: V\rightarrow V'$ be an $A$-morphism between right $H$-modules. Let $I(\varphi): V\rightarrow V'$ be defined by:
	\begin{equation}\label{def-I}
	I(\varphi)\cdot v = \sum_w \varphi(vC_w)C^w~.
	\end{equation}
	The morphism $I(\varphi)$ does not depend on the choice of basis $(C_w)$ and it is a morphism of $H$-modules \cite[Lemma 7.1.10]{geck2000characters}.
	
	For $V=V'=V_\l$ an irreducible representation, we have \cite[Theorem 7.2.1]{geck2000characters}:
	\begin{prop}\label{Prop:Schur-I}
		Let $\varphi\in \End(V_\l)$. Then:
		\begin{equation}
		I(\varphi) = s_\l \tr(\varphi) \id~.
		\end{equation}
	\end{prop}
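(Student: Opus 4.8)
The plan is to combine Schur's lemma with a trace computation, and then to pin down the resulting scalar by passing to a basis adapted to the Wedderburn decomposition \Cref{Eq:Wedderburn}.

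First I would use that $I(\varphi)$ is a morphism of $H$-modules, which is \cite[Lemma 7.1.10]{geck2000characters}, already recalled above. Since the Wedderburn form makes each $V_\lambda$ absolutely irreducible (so $\End_H(V_\lambda)=A$), Schur's lemma forces $I(\varphi)=c_\lambda(\varphi)\,\id$ for a scalar $c_\lambda(\varphi)\in A$, and the defining formula \Cref{def-I} makes $\varphi\mapsto c_\lambda(\varphi)$ manifestly $A$-linear. As both sides of the claimed identity are $A$-linear in $\varphi$, it remains to determine $c_\lambda(\varphi)$; taking the trace over $V_\lambda$ this amounts to $\tr I(\varphi)=s_\lambda\dim(V_\lambda)\,\tr(\varphi)$.

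Next I would compute this trace. Writing $R_h\colon v\mapsto vh$ for the right action, \Cref{def-I} reads $I(\varphi)=\sum_w R_{C^w}\circ\varphi\circ R_{C_w}$. Using cyclicity of the trace together with the relation $R_a\circ R_b=R_{ba}$ (valid for right modules) gives
\begin{equation}
\tr I(\varphi)=\sum_w \tr\!\big(\varphi\circ R_{C^wC_w}\big)=\tr\!\big(\varphi\circ R_z\big),\qquad z:=\textstyle\sum_w C^wC_w .
\end{equation}
The element $z$ is central: the dual-basis identity $\sum_w (aC^w)\otimes C_w=\sum_w C^w\otimes(C_w a)$ in $H\otimes H$ (itself checked by pairing the second factor against the functionals $\tau(\,\cdot\,h)$, which span $H^*$ by nondegeneracy), applied through multiplication, yields $az=za$ for all $a\in H$. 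Hence $z$ acts on $V_\lambda$ by a scalar $\omega_\lambda$, so that $\tr I(\varphi)=\omega_\lambda\tr(\varphi)$ and $c_\lambda(\varphi)=\tfrac{\omega_\lambda}{\dim V_\lambda}\,\tr(\varphi)$.

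Finally, the crux is to evaluate $\omega_\lambda$. Since neither $I(\varphi)$ nor $z$ depends on the chosen basis, I would take $(C_w)$ to be the union of the matrix-unit bases $\big(E^{(\mu)}_{ij}\big)$ of the blocks $\End(V_\mu)$ in \Cref{Eq:Wedderburn}. The relation $\tau=\sum_\mu \tfrac{1}{s_\mu}\chi_\mu$ shows that distinct blocks are $\tau$-orthogonal and that $\tau$ restricts to $\tfrac{1}{s_\mu}\tr$ on the $\mu$-block; hence the trace-dual of $E^{(\mu)}_{ij}$ is $s_\mu E^{(\mu)}_{ji}$. A short computation within the $\lambda$-block then gives $z_\lambda=\sum_{i,j}s_\lambda E^{(\lambda)}_{ji}E^{(\lambda)}_{ij}=s_\lambda\dim(V_\lambda)\,\id$, so $\omega_\lambda=s_\lambda\dim(V_\lambda)$ and the formula $I(\varphi)=s_\lambda\tr(\varphi)\,\id$ follows; equivalently one recognizes $z$ as the Casimir element and matches its central scalar using \Cref{Prop:Zlambda}. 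The main obstacle is precisely this last identification: everything up to it is formal, whereas pinning $\omega_\lambda=s_\lambda\dim(V_\lambda)$ requires the Wedderburn-adapted bookkeeping, namely computing the dual basis with respect to the rescaled form $\tfrac1{s_\mu}\tr$ on each block while keeping the right-module conventions straight.
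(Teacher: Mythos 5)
Your proof is correct. Note that the paper does not actually prove this proposition: it is imported verbatim from \cite[Theorem 7.2.1]{geck2000characters}, so there is no internal argument to compare with, and your write-up supplies a complete, self-contained derivation consistent with the paper's conventions. The route is the standard one for symmetric algebras: Schur's lemma (via the cited fact that $I(\varphi)$ is an $H$-morphism, plus splitness of $K\He$) reduces everything to a trace computation; cyclicity converts $\tr I(\varphi)$ into $\tr(\varphi\circ R_z)$ with $z=\sum_w C^wC_w$ central and basis-independent; and the Wedderburn-adapted basis, whose trace-dual you compute correctly from $\tau=\sum_\mu s_\mu^{-1}\chi_\mu$, yields the central scalar $s_\lambda\dim(V_\lambda)$. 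That last identity is the one step that genuinely ties your computation to the paper's normalization of the Schur elements (it is the paper's consequence of \Cref{Eq:Schur-def-2}), and you use it correctly, so the $s_\lambda$ you produce is the right one. Two minor caveats, neither of which is a gap in this setting: the division by $\dim V_\lambda$ is legitimate because one works over the characteristic-zero fraction field $K$, and the claim that the functionals $\tau(\,\cdot\,h)$ span $\He^*$ relies on the Gram matrix of the form being invertible over the base ring, which is guaranteed by the existence of the dual basis $(C^w)$ inside $\He$ itself.
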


	\paragraph{Hecke algebras.}
	Now, we specialize to the Hecke algebra $(\He, \tr)$ with its standard trace. By \cite{gyoja1989semisimplicity}, the Hecke algebra is semisimple over the localized ring $A=\mathbb{Z}[q^{\pm 1}]/P(q)$ where $P$ is the Poincar\'e polynomial of $\He$.
	The Artin--Wedderburn theorem implies that 
	\begin{equation}\label{Eq:AW}
	\He \simeq \bigoplus_{\l \in \Irr(\He)} \End(V_\l)~.
	\end{equation}
	
	\begin{Remark}
		For Hecke algebras, the irreducible representations are all inside so-called left cell representations. For type $A$, the left cell representations are all irreducible. The article \cite{neunhoffer2006kazhdan} by Neunhöffer describes explicitly the adapted basis for type $A$, i.e. the matrix elements for each factor $\End(V_\l)$. We will use these cell representations only for \Cref{Prop:pos-characters} below.
	\end{Remark}

	The decomposition of \Cref{Eq:AW} implies that the center of the Hecke algebra is given by diagonal matrices.
	
	\begin{prop}\label{Prop:mult-center}
		The elements $(Z_\l)_{\l \in \Irr(\He)}$ form a basis of the center $Z(\He)$ such that:
		\begin{equation}
		Z_\l Z_\mu = \delta_{\l,\mu}s_\l Z_\l \;\; \forall \;\lambda, \mu\in \Irr(\He)~.
		\end{equation}
	\end{prop}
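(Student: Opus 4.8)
The plan is to identify each $Z_\l$ with a scalar multiple of a central primitive idempotent and then read the product rule off directly. I work over $K\He$, so that the Artin--Wedderburn decomposition \eqref{Eq:AW} is available. Under \eqref{Eq:AW} the center $Z(K\He)$ consists exactly of those elements acting as a scalar on each block $\End(V_\l)$, and a basis of it is given by the central primitive idempotents $e_\l$, characterized by $\mu(e_\l)=\delta_{\l\mu}\,\id_{V_\mu}$ and satisfying $e_\l e_\mu=\delta_{\l\mu}e_\l$. Since $Z_\l$ is central it acts on each $V_\mu$ as a scalar $a_{\l\mu}\,\id$, i.e. $Z_\l=\sum_\mu a_{\l\mu}e_\mu$. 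The whole proposition then reduces to computing the $a_{\l\mu}$: once we know $a_{\l\mu}=\delta_{\l\mu}s_\l$, we get $Z_\l=s_\l e_\l$, whence
\begin{equation}
Z_\l Z_\mu=s_\l s_\mu\,e_\l e_\mu=\delta_{\l\mu}s_\l^2 e_\l=\delta_{\l\mu}s_\l Z_\l,
\end{equation}
which is the claimed identity. The basis statement is either inherited from the general fact recorded above (that the $\{Z_\l\}$ form a basis of $Z(H)$), or seen directly: the Schur elements $s_\l$ are nonzero, so the $Z_\l=s_\l e_\l$ are nonzero multiples of a basis of $Z(K\He)$ and hence a basis themselves.

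The diagonal value is free of charge: $a_{\l\l}=s_\l$ is exactly the statement, recalled just after \eqref{Eq:Schur-def-2}, that $Z_\l$ acts on $V_\l$ as $s_\l\,\id$. So the only thing left to prove is the off-diagonal vanishing $a_{\l\mu}=0$ for $\mu\neq\l$. Taking the $V_\mu$-trace of $Z_\l=\sum_{w}\chi_\l(C_w)C^w$ from \Cref{Prop:Zlambda} and using $\chi_\mu(Z_\l)=a_{\l\mu}\dim V_\mu$, this is precisely the orthogonality relation $\sum_{w\in W}\chi_\l(C_w)\chi_\mu(C^w)=0$ for $\l\neq\mu$.

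This orthogonality is the main obstacle, where the representation theory genuinely enters, but it follows directly from the averaging operator of \eqref{def-I}. I would apply that construction to $K$-linear maps between \emph{two distinct} irreducibles: for $\psi\colon V_\l\to V_\mu$ put $I(\psi)\cdot v=\sum_w\psi(vC_w)C^w$, i.e. $I(\psi)=\sum_w \mu(C^w)\,\psi\,\l(C_w)$ in operator form. By \cite[Lemma 7.1.10]{geck2000characters}, whose proof never uses $V=V'$, each $I(\psi)$ is a morphism of $\He$-modules $V_\l\to V_\mu$; since $V_\l\not\cong V_\mu$, Schur's lemma forces $I(\psi)=0$ for every $\psi$. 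Hence the linear operator $\psi\mapsto I(\psi)$ on $\Hom_K(V_\l,V_\mu)$ is identically zero, so its trace vanishes; but the trace of $\psi\mapsto \mu(C^w)\,\psi\,\l(C_w)$ is $\chi_\mu(C^w)\chi_\l(C_w)$, so $\sum_w\chi_\l(C_w)\chi_\mu(C^w)=0$, as required. (Alternatively one may quote the Schur relations for the matrix coefficients of the $\l$ from \cite[Chapter 7]{geck2000characters} and specialize.)

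Putting the pieces together gives $a_{\l\mu}=\delta_{\l\mu}s_\l$, hence $Z_\l=s_\l e_\l$, and both the product formula and the basis property follow as in the first paragraph. As a consistency check, weighting $Z_\l=s_\l e_\l$ by $1/s_\l$ and summing recovers $\sum_\l e_\l=1$, in agreement with \eqref{Eq:Schur-def-2}, and pairing with $\tr$ reproduces $\tau=\sum_\l s_\l^{-1}\chi_\l$.
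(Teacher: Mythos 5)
Your proof is correct, but it takes a somewhat different route from the paper's. The paper gets the basis statement from the isomorphism $Z(\He)\simeq T(\He)$, simply asserts $Z_\l Z_\mu=0$ for $\l\neq\mu$ as a consequence of the Wedderburn decomposition, and then obtains the diagonal identity from non-degeneracy of the trace via the one-line computation $\tr(Z_\l^2h)=\chi_\l(Z_\l h)=\chi_\l(s_\l h)=\tr(s_\l Z_\l h)$. You instead make the block structure explicit by establishing $Z_\l=s_\l e_\l$ with $e_\l$ the central primitive idempotents, from which both the product rule and the basis property drop out at once; the real work in your version is the off-diagonal vanishing, which you reduce to the Schur orthogonality relation $\sum_w\chi_\l(C_w)\chi_\mu(C^w)=0$ and prove via the averaging operator $I$ of \eqref{def-I} applied to maps between distinct irreducibles together with Schur's lemma. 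What your approach buys is that it actually proves the step the paper leaves implicit (and which the paper elsewhere only cites, as \cite[Corollary 7.2.4]{geck2000characters}), and it makes the identity $Z_\l=s_\l e_\l$ available for later use; what the paper's approach buys is brevity and a diagonal argument that avoids idempotents entirely by leaning on the symmetrizing trace. Both arguments are sound; the only points worth keeping in mind in yours are that concluding $a_{\l\mu}=0$ from $a_{\l\mu}\dim V_\mu=0$ uses that $K$ has characteristic zero, and that the diagonal value $a_{\l\l}=s_\l$ is indeed available to you because the paper takes the action of $Z_\l$ on $V_\l$ by $s_\l\id$ as (equivalent to) the definition of the Schur element.
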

	
	\begin{proof}
		Since the characters form a basis of the space of trace functions, the $Z_\l$'s form a basis of the center $Z(H)$. The Wedderburn decomposition implies that $Z_\l Z_\mu = 0$ for $\l\neq \mu$.
		Now, for all $h\in H$:
		\begin{equation}
		\tr(Z_\l^2h) = \chi_\l(Z_\l h) = \chi_\l(s_\l h) = \tr(s_\l Z_\l h)~.
		\end{equation}
		Hence $Z_\l^2 = s_\l Z_\l$.
	\end{proof}
	
	There is a special symmetry in the Schur elements of Hecke algebras. Let $\gamma$ be the $\bb{Z}[v^{\pm 1}]$-algebra homomorphism on $\He$ given by $\gamma(h_s) = -qh_s^{-1}$. For $\l \in \Irr(\He)$ let $\l^*$ be the composition $\l\circ\gamma$ called the \textit{dual representation}. Proposition 9.4.3 in \cite{geck2000characters} states that:
	\begin{prop}\label{Prop:duality-schur-elements}
		For $\l\in\Irr(\He)$, we have $$s_{\l^*}(q) = s_\l(q^{-1}).$$
	\end{prop}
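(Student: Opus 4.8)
The cleanest self-contained route is through an explicit trace formula for the Schur elements. The plan is first to record that, taking the standard basis $C_w = h_w$, its trace-dual basis is $C^w = h_{w^{-1}}$ by \Cref{Eq:trace-hx}, so \Cref{Prop:Zlambda} gives $Z_\l = \sum_{w\in W}\chi_\l(h_w)h_{w^{-1}}$. Since $Z_\l$ acts on $V_\l$ as $s_\l\id$, evaluating its character yields $\chi_\l(Z_\l) = s_\l\dim V_\l$, and by linearity of $\chi_\l$ this gives
\begin{equation}
s_\l = \frac{1}{\dim V_\l}\sum_{w\in W}\chi_\l(h_w)\chi_\l(h_{w^{-1}})~.
\end{equation}
This formula lets me compare $s_{\l^*}$ with $s_\l$ term by term.

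Next I would unwind $\gamma$ on the standard basis. Since $\gamma$ is an algebra homomorphism that dualizes each generator, applying it along a reduced expression $w = s_1\cdots s_k$ gives $\gamma(h_w) = (-1)^{l(w)}(h_{w^{-1}})^{-1} = (-1)^{l(w)}\iota(h_w)$, where $\iota$ is the bar-involution of the excerpt. Hence $\chi_{\l^*}(h_w) = \chi_\l(\gamma(h_w)) = (-1)^{l(w)}\chi_\l(\iota(h_w))$. Substituting into the trace formula for $s_{\l^*}$, the two sign factors satisfy $(-1)^{l(w)}(-1)^{l(w^{-1})} = 1$ because $l(w)=l(w^{-1})$, so they cancel and
\begin{equation}
s_{\l^*} = \frac{1}{\dim V_\l}\sum_{w\in W}\chi_\l(\iota(h_w))\chi_\l(\iota(h_{w^{-1}}))~.
\end{equation}
The identity $s_{\l^*}(q) = s_\l(q^{-1})$ then follows immediately from the key lemma $\chi_\l\circ\iota = \overline{\chi_\l}$, where $\overline{(\cdot)}$ denotes the substitution $v\mapsto v^{-1}$ (equivalently $q\mapsto q^{-1}$), since that rewrites the right-hand side as $\overline{s_\l}$.

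The main obstacle is exactly this lemma: that the bar-involution conjugates each irreducible character by $v\mapsto v^{-1}$. I would argue as follows. Over a splitting field $K$ of $K\He$, on which $\iota$ acts by $v\mapsto v^{-1}$, the assignment $f\mapsto\overline{f\circ\iota}$ is $K$-linear (a composite of two $v$-semilinear operations), sends trace functions to trace functions, and permutes the irreducible characters, because $\iota$ is an algebra automorphism and conjugating matrix entries by the bar turns the semilinear representation $\l\circ\iota$ into an honest irreducible $K$-representation $\overline{\l\circ\iota}$. Thus $\overline{\chi_\l\circ\iota} = \chi_{\pi(\l)}$ for some permutation $\pi$ of $\Irr(K\He)$ with $\pi^2=\id$. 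To identify $\pi$ with the identity I would specialize at $v=1$: there $\iota$ becomes the identity of $\C[W]$ and the bar is trivial, so $\overline{\chi_\l\circ\iota}$ and $\chi_\l$ reduce to the same ordinary character of $W$. By Tits' deformation theorem the specialization $v=1$ induces a bijection $\Irr(K\He)\simeq\Irr(\C[W])$, which forces $\pi(\l)=\l$, proving $\chi_\l\circ\iota = \overline{\chi_\l}$.

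A shorter alternative is simply to invoke Proposition 9.4.3 of \cite{geck2000characters} directly; the argument above is essentially an unpacking of that statement adapted to the present notation, the only delicate point being the character-conjugation lemma resolved by the $v=1$ specialization.
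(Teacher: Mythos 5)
Your proof is correct in substance, but note that the paper itself offers no proof at all: it simply quotes Proposition 9.4.3 of \cite{geck2000characters}, so what you have written is a genuine unpacking rather than a parallel to anything in the text. Your route --- the trace formula $s_\l = (\dim V_\l)^{-1}\sum_w \chi_\l(h_w)\chi_\l(h_{w^{-1}})$ obtained from $\chi_\l(Z_\l)=s_\l\dim V_\l$ via \Cref{Prop:Zlambda} and \Cref{Eq:trace-hx}, combined with the factorization $\gamma = (\text{sign twist})\circ\iota$ and the lemma $\chi_\l\circ\iota=\overline{\chi_\l}$ proved by Tits deformation at $v=1$ --- is exactly the standard argument behind the cited result, and it buys the reader a self-contained justification using only objects already introduced in the paper (the standard trace, $Z_\l$, the bar involution $\iota$). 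The Tits-deformation step is the only place where you import something external, and you handle the semilinearity of $\l\circ\iota$ correctly by composing with the bar before invoking irreducibility.

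One point deserves care. The paper defines $\gamma(h_s)=-qh_s^{-1}$, but in the normalized presentation $h_s^2=(v^{-1}-v)h_s+1$ this map does not preserve the quadratic relation; the homomorphism on the normalized generators is $\gamma(h_s)=-h_s^{-1}$ (the factor $-q$ belongs to the unnormalized generators $T_s=q^{1/2}h_s$). Your computation $\gamma(h_w)=(-1)^{l(w)}\iota(h_w)$ tacitly uses the corrected version; with the paper's formula taken literally you would pick up an extra $q^{l(w)}$ in each factor, i.e.\ $q^{2l(w)}$ in each summand, and the cancellation would fail. Since your final identity is the true one, you have implicitly fixed the paper's normalization slip, but you should say so explicitly rather than quoting $\gamma(h_s)=-qh_s^{-1}$ and then computing as if it read $-h_s^{-1}$. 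With that clarification the argument is complete.
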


	\subsection{Central elements}
	
	Recall the expression of the invariant for punctured surfaces in terms of the standard trace of \Cref{easyexpr}:
	$$ P_{g,k,W} = \tr \;(C_wC^w)^{k-1}(C_xC_yC^xC^y)^{g}.$$
	
	For surfaces with boundary labeled by element in $\He$, terms of the form $C_whC^w$ arise.
	
	\begin{thm}
		The elements of the form $(C_wC^w)^{k-1}(C_xC_yC^xC^y)^{g}$ and $C_whC^w$ (for $h\in\He$) are in the center of the Hecke algebra.
	\end{thm}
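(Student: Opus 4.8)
The plan is to first unwind the suppressed summations. With the Einstein convention of this section, the first family is the product $\Omega_0^{\,k-1}\Omega_2^{\,g}$ of the two Casimir-type elements
\begin{equation*}
\Omega_0 := \sum_{w\in W} C_w C^w, \qquad \Omega_2 := \sum_{x,y\in W} C_x C_y C^x C^y,
\end{equation*}
while the second family is $T(h) := \sum_{w\in W} C_w\, h\, C^w$ for a fixed $h\in\He$. Since a product of central elements is central, the theorem reduces to three claims: that $\Omega_0$, $\Omega_2$ and $T(h)$ each lie in the center of $\He$. As $\Omega_0 = T(1)$, only two genuinely independent statements remain.

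The engine for $T(h)$ is an invariance property of the copairing $\sum_w C_w\otimes C^w\in\He\otimes\He$: for every $a\in\He$,
\begin{equation*}
\sum_{w\in W} (aC_w)\otimes C^w \;=\; \sum_{w\in W} C_w\otimes (C^w a). \tag{$\star$}
\end{equation*}
To prove $(\star)$ I would expand every factor in the fixed basis via $b=\sum_v \tr(bC^v)\,C_v$ together with the duality $\tr(C_vC^w)=\delta_{v,w}$; after relabelling the summation indices the two sides coincide precisely because of the cyclicity $\tr(aC_vC^w)=\tr(C^w aC_v)$ of the standard trace. Applying to $(\star)$ the linear map $\He\otimes\He\to\He$, $X\otimes Y\mapsto XhY$, yields $aT(h)=\sum_w (aC_w)h\,C^w = \sum_w C_w h\,(C^w a)=T(h)a$, so $T(h)$ is central; the case $h=1$ settles $\Omega_0$ and the whole second family.

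For $\Omega_2$ the same ``push $a$ across'' strategy stalls, and this is the main obstacle: the two index pairs are interleaved in the commutator pattern $C_xC_yC^xC^y$, so a single use of $(\star)$ only transports $a$ into the middle of the word rather than out the far end. I would circumvent this by exploiting basis-independence. A direct change-of-basis computation shows that $\Omega_2$ does not depend on the chosen basis $(C_w)$: since each lower index is contracted with exactly one upper index, the transition matrix and its inverse collapse to Kronecker deltas. Now for any invertible $g\in\He$ the conjugated family $D_w:=gC_wg^{-1}$ is again a basis, with trace-dual basis $D^w=gC^wg^{-1}$ (indeed $\tr(D_wD^{w'})=\tr(C_wC^{w'})=\delta_{w,w'}$ by cyclicity). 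Feeding this basis into the basis-independent expression for $\Omega_2$ makes all intermediate factors $g^{-1}g$ cancel:
\begin{equation*}
\Omega_2=\sum_{x,y\in W} D_xD_yD^xD^y=\sum_{x,y\in W} g\,C_xC_yC^xC^y\, g^{-1}=g\,\Omega_2\,g^{-1}.
\end{equation*}

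Hence $g\Omega_2=\Omega_2 g$ for every invertible $g$; since the invertible elements $\{h_w\}_{w\in W}$ form a basis and thus span $\He$ over $\Z[v^{\pm1}]$, $\Omega_2$ is central. (The same conjugation argument re-proves centrality of $\Omega_0$.) Multiplying the three central elements then gives the theorem. As a more computational alternative for $\Omega_2$ one can instead iterate $(\star)$ together with its companion $\sum_w (C_wa)\otimes C^w=\sum_w C_w\otimes(aC^w)$ in a careful two-step ``rotation'' of $a$ around the commutator word, but I expect the conjugation argument to be both shorter and more transparent.
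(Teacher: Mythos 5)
Your proof is correct, but it takes a genuinely different route from the paper. The paper reduces centrality of $s$ to the identity $\tr(sh_1h_2)=\tr(sh_2h_1)$ via non-degeneracy of the standard trace, and then obtains that identity \emph{topologically}: both sides are invariants of the same ciliated surface with two boundary labels, and a $\pi$-rotation of the surface exchanges $h_1$ and $h_2$ without changing the topology. You instead argue purely algebraically: you prove the adjoint-invariance $(\star)$ of the copairing $\sum_w C_w\otimes C^w$ (which is essentially the mechanism behind \cite[Lemma 7.1.10]{geck2000characters}, already cited in the paper for the map $I(\varphi)$), which immediately centralizes $\sum_w C_whC^w$; and for $\sum_{x,y}C_xC_yC^xC^y$, where a single application of $(\star)$ indeed stalls, you use basis-independence of the expression together with conjugation of the basis by a unit $g$, concluding because the invertible elements $h_w$ span $\He$. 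Both arguments are sound. The paper's proof is shorter once the TQFT formalism is in place and illustrates the guiding philosophy that topological symmetries produce algebraic identities, but it leans on the already-established triangulation independence. Your proof is self-contained and, for the $C_whC^w$ family, works verbatim for any symmetric algebra with a trace-dual basis; the conjugation trick for the genus term is elegant but does exploit the Hecke-specific fact that the standard basis consists of units (for a general symmetric algebra you would have to fall back on the iterated version of $(\star)$ that you sketch at the end). Your reduction of the first family to $T(1)=\sum_w C_wC^w$ and the closure of the center under products are both fine.
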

	
	\begin{proof}
		For the first statement, let $s=(C_wC^w)^{k-1}(C_xC_yC^xC^y)^{g}$.
		By the non-degeneracy of the trace, it is sufficient to show that 
		\begin{equation}\label{trace-egal}
		\tr(sh_1h_2) = \tr(h_1sh_2) \;\forall \; h_1, h_2\in \mc{H}.
		\end{equation}
		The expression $\tr(sh_1h_2)$ is the invariant of a surface with two boundary components labeled by $h_1$ and $h_2$. The same holds for $\tr(h_1sh_2)=\tr(sh_2h_1)$ with $h_1$ and $h_2$ exchanged.
		
		A $\pi$-rotation of the surface as shown in \Cref{Fig:rotation} exchanges $h_1$ and $h_2$. Since it does not change the topology, the invariants of the two surfaces coincide. This implies \Cref{trace-egal} and thus the theorem.
		
		The second statement is analogous. The surface corresponding to the invariant 
		\begin{equation}
		    \tr C_whC^wh_1h_2
		\end{equation}
		is a cylinder with one boundary labeled by $h$ and another boundary with two cilia and labels $h_1$ and $h_2$. Again a $\pi$-rotation exchanges $h_1$ and $h_2$.
	\end{proof}
	\begin{figure}[h!]
		\centering
		\includegraphics[height=3cm]{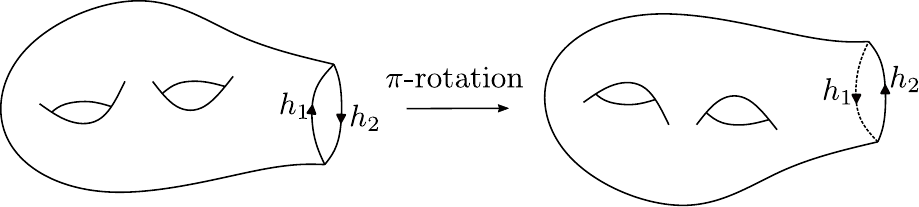}
		
		\caption{Exchanging $h_1$ and $h_2$.}\label{Fig:rotation}
	\end{figure}
	
	Note that the central elements $C_wC^w$ and $C_xC_yC^xC^y$ can be seen as Casimir elements of order 2 and 4 for the Hecke algebra.
	
	From \Cref{Prop:mult-center} we know that the Schur elements $(Z_\l)_{\l\in \Irr(\He)}$ form a basis of the center $Z(\He)$ of the Hecke algebra. We can determine the decomposition of the two building blocks $C_whC^w$ and $C_xC_yC^xC^y$ of expressions in the form of \Cref{easyexpr} in this basis:
	\begin{prop}\label{Prop:expr-in-center}
		For $h\in \He$, we have:
		\begin{equation}
		C_whC^w = \textstyle\sum_{\l} \chi_\l(h) Z_\l~,
		\end{equation}
		and 
		\begin{equation}
		C_xC_yC^xC^y = \textstyle\sum_{\l} s_\l Z_\l~.
		\end{equation}
	\end{prop}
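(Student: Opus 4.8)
The plan is to exploit the Wedderburn decomposition \Cref{Eq:AW}, $K\He \simeq \bigoplus_{\l}\End(V_\l)$. Both elements $C_whC^w$ and $C_xC_yC^xC^y$ are central by the theorem above, and by \Cref{Prop:mult-center} a central element is completely determined by the scalar through which it acts on each irreducible module $V_\mu$: indeed $Z_\l$ acts as $s_\l\id$ on $V_\l$ and as $0$ on $V_\mu$ for $\mu\neq\l$. Hence it suffices, for each identity, to fix an irreducible $V_\mu$ and to check that the two sides induce the same scalar endomorphism of $V_\mu$. On the right-hand sides this is immediate: $\sum_\l\chi_\l(h)Z_\l$ acts on $V_\mu$ as $s_\mu\chi_\mu(h)\id$, and $\sum_\l s_\l Z_\l$ acts as $s_\mu^2\id$.

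The computational engine I would use is the following reformulation of \Cref{Prop:Schur-I}: for every $A\in\End(V_\mu)$ one has
\begin{equation}
\sum_{w\in W}\mu(C_w)\,A\,\mu(C^w) = s_\mu\tr(A)\,\id_{V_\mu}~.
\end{equation}
This is the operator $I$ of \Cref{def-I} read inside the representation $\mu$. The point is that the canonical element $\sum_w C_w\otimes C^w$ is balanced, i.e. $\sum_w aC_w\otimes C^w = \sum_w C_w\otimes C^w a$ for all $a\in\He$, which follows directly from the symmetry and cyclicity of the standard trace; consequently the sum commutes with all $\mu(a)$ and, $K\He$ being split semisimple, Schur's lemma forces it to be a scalar, whose value is pinned down to $s_\mu\tr(A)$ by \Cref{Prop:Schur-I}. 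The symmetry of the trace also makes the ordering of $C_w$ and $C^w$ (and the left/right placement) immaterial, so no convention issue arises.

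Granting this engine, the first identity is immediate: on $V_\mu$ the left-hand side is $\sum_w\mu(C_w)\mu(h)\mu(C^w) = s_\mu\tr(\mu(h))\,\id = s_\mu\chi_\mu(h)\,\id$, matching the right-hand side. For the second identity I would iterate the engine twice. Fixing $V_\mu$ and summing first over $x$ with $A=\mu(C_y)$ gives
\begin{equation}
\sum_{x}\mu(C_x)\mu(C_y)\mu(C^x) = s_\mu\chi_\mu(C_y)\,\id~,
\end{equation}
so that
\begin{equation}
\sum_{x,y}\mu(C_x)\mu(C_y)\mu(C^x)\mu(C^y) = s_\mu\sum_{y}\chi_\mu(C_y)\mu(C^y) = s_\mu\,\mu(Z_\mu) = s_\mu^2\,\id~,
\end{equation}
where the middle equality is \Cref{Prop:Zlambda} and the last uses that $Z_\mu$ acts as $s_\mu\id$. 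This matches the scalar $s_\mu^2$ of the right-hand side.

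I expect the main obstacle to be the clean justification of the engine above: verifying the balancedness of $\sum_w C_w\otimes C^w$, invoking Schur's lemma over the fraction field $K$ (where $K\He$ is split semisimple so that commuting endomorphisms are scalars), and matching the precise scalar with \Cref{Prop:Schur-I} while keeping the left/right module conventions straight. Once that identity is in hand, both statements reduce to the scalar bookkeeping carried out above.
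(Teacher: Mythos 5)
Your proof is correct. For the first identity it is essentially the paper's argument: both evaluate the operator $I(\varphi_h)$ of \Cref{def-I} via \Cref{Prop:Schur-I} to see that $C_whC^w$ acts on $V_\mu$ as $s_\mu\chi_\mu(h)\id$, and your preliminary reduction (a central element of the split semisimple algebra $K\He$ is determined by the scalars by which it acts on the $V_\mu$, with the right-hand side acting as $s_\mu\chi_\mu(h)\id$ resp. $s_\mu^2\id$) is implicit in the paper's comparison of coefficients along the basis $(Z_\l)$. For the second identity your route genuinely differs: you stay inside a single irreducible $V_\mu$, pair the outer $C_x$ with $C^x$ around $A=\mu(C_y)$ to get $s_\mu\chi_\mu(C_y)\id$, and then recognize $\sum_y\chi_\mu(C_y)\mu(C^y)=\mu(Z_\mu)=s_\mu\id$ by \Cref{Prop:Zlambda} and \Cref{Prop:mult-center}. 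The paper instead pairs $C_y$ with $C^y$ around $C^x$, and then, to handle the remaining outer factor $C_x$, must pass to the Wedderburn-adapted basis $(B_x)$ of matrix coefficients (using basis-independence of the expression) so that each $B_x$ lies in a single block and the sum over $x$ can be regrouped into $\sum_\l s_\l Z_\l$. Your version avoids the adapted basis and the bookkeeping with $\l(x)$ entirely, at the cost of having to justify your ``engine'' for arbitrary $A\in\End(V_\mu)$ — but that is exactly \cite[Lemma 7.1.10]{geck2000characters} together with \Cref{Prop:Schur-I}, and your balancedness argument for $\sum_w C_w\otimes C^w$ is the standard proof of that lemma, so nothing is missing. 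The only point worth making explicit is that the identities are then equalities in $K\He$ (the $Z_\l$ and $s_\l$ live over the fraction field), which is also the setting of the paper.
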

	Note that in particular (for $h=1$), we get
		\begin{equation}
		C_wC^w = \textstyle\sum_{\l} \dim(V_\l) Z_\l~.
		\end{equation}
	\begin{proof}
		For the first assertion, take $V=V'=V_\l$ and $\varphi_h \in \End(V_\l)$, the action induced by right multiplication by $h$. Let $h'\in V_\l$ and write $C_xhC^x = \sum_\mu e_\mu Z_\mu$.
		On the one hand we have 
		\begin{equation}
		h'C_xhC^x = I(\varphi_h).h' = s_\l \tr(\varphi_h) h' = s_\l \chi_\l(h) h'
		\end{equation}
		thanks to \Cref{Prop:Schur-I}, and on the other
		\begin{equation}
		h'C_xhC^x = \textstyle\sum_\mu e_\mu Z_\mu h' = e_\l s_\l h'
		\end{equation}
		since $Z_\l h' = s_\l h'$ and 0 for other values of $\mu$.
		Comparing coefficients we get $e_\l = \chi_\l(h)$.
		
		\medskip
		For the second assertion, consider the map $\varphi_{\l,x}:V_\l \rightarrow V_\l$ given by
		\begin{equation}
		\varphi_{\l,x}(v) = vC^x~.
		\end{equation}
		Thus: 
		\begin{equation}
		I(\varphi_{\l,x}).v = \sum_y \varphi_{\l,x}(vC_y)C^y = \sum_y vC_yC^xC^y~,
		\end{equation}
		and by \Cref{Prop:Schur-I}:
		\begin{equation}
		I(\varphi_{\l,x}).v  = s_\l\tr(\varphi_{\l,x})v = s_\l \, \chi_\l(C^x)v
		\end{equation}
		where we have used the definition of the character $\chi_\l$.
		Hence:
		\begin{equation}\label{aux32}
		\sum_y vC_yC^xC^y = s_\l \,\chi_\l(C^x)\, v \text{ for } v\in V_\l.
		\end{equation}
		
		Recall the Wedderburn decomposition 
		\begin{equation}
		\He \cong \bigoplus_\l \End(V_\l)~.
		\end{equation}
		Let $(B_x)$ be an adapted basis of $\He$ given by the matrix coefficients in the factors of the decomposition, and $(B^x)$ its trace-dual basis in $\He$. 
		Let $\l(x)$ be the unique $\l$ such that $B_x \in V_\l$.
		
		Since $B_xB_y = 0$ whenever $\l(x)\neq \l(y)$, we know that $B^x \in \Span(B_y \mid \l(y) = \l(x))$, and hence $B^xB_y = 0$ for $\l(x)\neq\l(y)$. Hence $\chi_\l(A^x) = 0$ if $\l \neq\l(x)$.
		
		\Cref{aux32} implies that:
		\begin{equation}
		\sum_y B_xB_yB^xB^y = s_{\l(x)}\chi_{\l(x)}(B^x)B_x~,
		\end{equation}
		and eventually:
		\begin{equation}
		\begin{split}
		\sum_{x,y}B_xB_yB^xB^y &= \sum_x s_{\l(x)}\chi_{\l(x)}(B^x)B_x \\
		&= \sum_\l s_\l \sum_{x \,\mid\, \l(x) = \l} \chi_\l(B^x)B_x \\
		&= \sum_\l s_\l \sum_{x} \chi_\l(B^x)B_x \\
		&= \sum_\l s_\l Z_\l
		\end{split}
		\end{equation}
		where we used that $\chi_\l(B^x) = 0$ if $\l\neq \l(x)$ and that $Z_\l = \sum_x \chi_\l(B^x)B_x$ from \Cref{Prop:Zlambda}. This concludes the proof.
	\end{proof}

	\subsection{Explicit expression}
	We are now ready to compute the invariants for ciliated surfaces. We start with punctured surfaces.
	
	\begin{thm}\label{Thm:explexpr}
		The polynomial invariant corresponding to a punctured surface is given by 
		\begin{equation}
		P_{g,k,W}(q) = \sum_\l (\dim V_\l)^k s_\l(q)^{2g-2+k}~.
		\end{equation}
	\end{thm}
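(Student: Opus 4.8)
The plan is to feed the three structural results already established—the trace formula of \Cref{easyexpr}, the central decompositions of \Cref{Prop:expr-in-center}, and the multiplication rule of \Cref{Prop:mult-center}—into a single componentwise computation. First I would recall that
$$P_{g,k,W} = \tr\,(C_wC^w)^{k-1}(C_xC_yC^xC^y)^{g},$$
and substitute $C_wC^w = \sum_\l (\dim V_\l)\, Z_\l$ together with $C_xC_yC^xC^y = \sum_\l s_\l\, Z_\l$ from \Cref{Prop:expr-in-center}. Everything now lives in the center $Z(\He)$, where by \Cref{Prop:mult-center} the $Z_\l$ behave like orthogonal idempotents up to scale.

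To exploit this cleanly I would pass to the genuine orthogonal idempotents $e_\l := Z_\l / s_\l$. By \Cref{Prop:mult-center} one has $e_\l e_\mu = \delta_{\l\mu}\, e_\l$, and by \Cref{Eq:Schur-def-2} one has $\sum_\l e_\l = 1$; hence raising any central element written in the basis $(e_\l)$ to a power acts componentwise and remains valid even for the zeroth power. Rewriting the two blocks as $C_wC^w = \sum_\l (\dim V_\l)\, s_\l\, e_\l$ and $C_xC_yC^xC^y = \sum_\l s_\l^2\, e_\l$, the product collapses to
$$(C_wC^w)^{k-1}(C_xC_yC^xC^y)^{g} = \sum_\l (\dim V_\l)^{k-1}\, s_\l^{\,2g+k-1}\, e_\l = \sum_\l (\dim V_\l)^{k-1}\, s_\l^{\,2g+k-2}\, Z_\l.$$

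Finally I would take the trace term by term. The only external input needed is $\tr Z_\l = \dim V_\l$, which follows from the identity $\tau(Z_\l h) = \chi_\l(h)$ established inside the proof of \Cref{Prop:Zlambda} by setting $h = 1$, since $\chi_\l(1) = \dim V_\l$. This gives
$$P_{g,k,W} = \sum_\l (\dim V_\l)^{k-1}\, s_\l^{\,2g-2+k}\,\tr Z_\l = \sum_\l (\dim V_\l)^{k}\, s_\l(q)^{2g-2+k},$$
which is the claimed formula. The argument is entirely mechanical once the idempotent reformulation is in place, so there is no serious obstacle; the only point demanding care is the bookkeeping of the exponents of $s_\l$, where the extra factor $s_\l$ hidden in $e_\l = Z_\l/s_\l$ must not be dropped. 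The passage to $e_\l$ is precisely what makes the boundary cases $k=1$ and $g=0$ transparent, since otherwise the naive power rule $(\sum_\l a_\l Z_\l)^n = \sum_\l a_\l^n\, s_\l^{n-1}\, Z_\l$ would have to be supplemented by \Cref{Eq:Schur-def-2} to handle the exponent $n=0$.
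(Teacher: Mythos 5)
Your proposal is correct and follows essentially the same route as the paper's proof: start from the trace formula of \Cref{easyexpr}, substitute the central decompositions of \Cref{Prop:expr-in-center}, multiply out using \Cref{Prop:mult-center}, and finish with $\tr Z_\l = \dim V_\l$. Your only refinement is the explicit passage to the idempotents $e_\l = Z_\l/s_\l$, which is a cosmetic normalization of the same computation, though it does make transparent the boundary case $k=1$ where the paper implicitly relies on $\sum_\l s_\l^{-1} Z_\l = 1$ to interpret the zeroth power.
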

	
	\begin{proof}
		We have that:
		\begin{equation}
		\begin{split}
		P_{g,k,W}(q) &= \tr \left(\left(C_wC^w\right)^{k-1}\left(C_xC_yC^xC^y\right)^g\right)  \\
		&= \tr \left(\textstyle\sum_\l \dim V_\l Z_\l\right)^{k-1}\left(\textstyle\sum_\l s_\l Z_\l\right)^g \\
		&= \tr \sum_\l (\dim V_\l)^{k-1} s_\l^{2g-2+k}Z_\l \\ 
		&= \sum_\l (\dim V_\l)^k s_\l^{2g-2+k}~,
		\end{split}
		\end{equation}
		where the first equality is \Cref{easyexpr}, the second one comes from \Cref{Prop:expr-in-center}, the third one from \Cref{Prop:mult-center}, and in the last one we used $\tr Z_\l = \chi_\l(1) = \dim V_\l$.
	\end{proof}
	
	For example, let us consider the case $W=\mf{S}_2$. There are two Schur elements in $\He_{\mf{S}_2}$, respectively $s_1=1+q$ and $s_2 = 1+q^{-1}$. Hence the invariant corresponding to a genus $g$ surface with $k$ punctures is given by
	\begin{equation}
	P_{g,k,\mf{S}_2} = (1+q)^{2g-2+k}+(1+q^{-1})^{2g-2+k}~.
	\end{equation}
	
	Looking at the explicit expression in \Cref{Thm:explexpr}, we observe several phenomena:
	
	\begin{itemize}
		\item We can evaluate the expression at $k=0$, although we have no definition of our invariant for closed surfaces. In view of \Cref{Rem:ramified-cover} this is not surprising since our polynomial seems to count special ramified coverings over $\S$.
		\item 	Specializing to $q=1$ and using $s_\l(1) = \vert W\vert (\dim V_\l)^{-1}$ (in type $A$ this is the \textit{hook length formula}), we recover the result of \Cref{poly-q1}.
		\item The invariance under $q\mapsto q^{-1}$ of our polynomial can be obtained from the explicit expression of \Cref{Thm:explexpr} and the duality property of Schur elements of \Cref{Prop:duality-schur-elements}.
		Interestingly, the coefficients of each Schur element are symmetric as can be observed by a case-by-case study (see \Cref{Prop:schur-pos-sym}). It would be nice to recover this property using the TQFTs developed in this paper. 
	\end{itemize}

	Let us turn to the case of a ciliated surface $\Sigma_{g,k,\{p_1,...,p_n\}}$ of genus $g$, $k \geq 1$ punctures and boundary components with $p_i \geq 1$ cilia. Let $h_i \in \He$ be the product of the elements along the $i$-th boundary component following the orientation of $\Sigma$ ($h_i$ \textit{per se} is not well-defined but its conjugacy class is and it is enough to write \Cref{Thm:explexpr-boundary} unambiguously since characters are class functions).
	
	\begin{thm}\label{Thm:explexpr-boundary}
		For a ciliated surface $\Sigma_{g,k,\{p_1,...,p_n\}}$ with labels $(h_1,...,h_n) \in \He^n$, the polynomial invariant is:
		\begin{equation}
		P_{\S,W}(q) = \sum_\l (\dim V_\l)^{k} s_\l(q)^{2g-2+k+n}\chi_\l(h_1)\cdots \chi_\l(h_n)~.
		\end{equation}
	\end{thm}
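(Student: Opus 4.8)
The plan is to reduce the statement to a single trace of a product of central elements, exactly as in the proof of \Cref{Thm:explexpr}, and then carry out the Wedderburn bookkeeping using \Cref{Prop:expr-in-center} and \Cref{Prop:mult-center}. The only genuinely new ingredient is accommodating the $n$ boundary components and their labels $h_i$; once the correct trace expression is in hand, the computation is essentially a repetition of the punctured case.

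First I would establish the generalization of \eqref{easyexpr}, namely
$$P_{\S,W} = \tr\left[(C_wC^w)^{k-1}\Big(\prod_{i=1}^n C_{w_i} h_i C^{w_i}\Big)(C_xC_yC^xC^y)^g\right],$$
where each summation index $w, w_i, x, y$ is contracted independently. This follows from the normal-form polygon of \Cref{Fig:q1-1} (left), augmented by $n$ edges carrying the boundary labels $h_i$: the $g$ handles contribute the factors $C_xC_yC^xC^y$, the path through the $k$ punctures contributes $(C_wC^w)^{k-1}$, and each boundary component contributes a factor $C_{w_i} h_i C^{w_i}$. The shape of the boundary factor is exactly the one exhibited for a cylinder in the proof of the central-elements theorem above, where $\tr\, C_w h C^w h_1 h_2$ was identified with the invariant of a cylinder whose one boundary carries the label $h$; its legitimacy is underwritten by the trace-expression proposition preceding \eqref{easyexpr}. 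Each factor $C_{w_i} h_i C^{w_i}$ is central by that same theorem, so all factors commute and the order of the $h_i$ is irrelevant, matching the symmetry of the claimed formula.

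With this expression in hand, I would substitute the decompositions of \Cref{Prop:expr-in-center}:
$$C_wC^w = \sum_\l \dim(V_\l)\, Z_\l, \quad C_{w_i}h_iC^{w_i} = \sum_\l \chi_\l(h_i)\, Z_\l, \quad C_xC_yC^xC^y = \sum_\l s_\l\, Z_\l.$$
The product then involves $(k-1)+n+g$ factors of $Z$-type; collapsing them with $Z_\l Z_\mu = \delta_{\l\mu} s_\l Z_\l$ from \Cref{Prop:mult-center} yields
$$\sum_\l (\dim V_\l)^{k-1}\Big(\prod_i \chi_\l(h_i)\Big) s_\l^{\,2g-2+k+n}\, Z_\l,$$
the exponent of $s_\l$ being the sum of the $g$ explicit factors carried by the handle terms and the further $s_\l^{(k-1)+n+g-1}$ arising from merging the $(k-1)+n+g$ copies of $Z_\l$. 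Taking the trace and using $\tr Z_\l = \chi_\l(1) = \dim V_\l$ supplies one more factor of $\dim V_\l$ and gives precisely $\sum_\l (\dim V_\l)^{k} s_\l^{2g-2+k+n}\prod_i \chi_\l(h_i)$.

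I expect the main obstacle to be the first step, the bookkeeping in the trace expression — in particular justifying that exactly one puncture is absorbed into the trace closure (so that the final exponent of $\dim V_\l$ is $k$ and not $k+1$), which is precisely where the hypothesis $k\geq 1$ enters. Concretely, the cyclic trace contributes one factor $\dim V_\l = \chi_\l(1)$, so the closure behaves like an extra puncture with trivial label; this forces the explicit puncture factor to be $(C_wC^w)^{k-1}$ rather than $(C_wC^w)^{k}$, and it explains why a boundary component, whose label $h_i$ is generally not sent to $\dim V_\l$ by the characters, cannot play the role of the absorbed feature. Once this accounting is pinned down, the remaining steps are routine and mirror the argument for \Cref{Thm:explexpr}.
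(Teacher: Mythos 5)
Your proof is correct and follows essentially the same route as the paper: write the invariant as the trace of a product of central elements coming from a normal-form polygon, then decompose each factor in the basis $(Z_\l)$ via \Cref{Prop:expr-in-center} and collapse with \Cref{Prop:mult-center}. The only (harmless) difference is which feature the trace closure absorbs: the paper bases the polygon at the first boundary component, using $(C_wC^w)^{k}\prod_{i=2}^{n}(C_wh_iC^w)$ and $\tr(Z_\l h_1)=\chi_\l(h_1)$, while you base it at a puncture (legitimate since $k\geq 1$), using $(C_wC^w)^{k-1}\prod_{i=1}^{n}(C_{w_i}h_iC^{w_i})$ and $\tr Z_\l=\dim V_\l$; both bookkeepings yield identical exponents.
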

	
	\begin{proof}
		We can suppose $n\geq 1$ since the punctures case was already treated in \Cref{Thm:explexpr}. There is a polygonal gluing yielding the ciliated surface $\S$ in the following way: start with a disc with one cilia on the boundary. Then glue $g$ handles to it, giving a term $(C_xC_yC^xC^y)^g$, then add $k$ punctures, giving a term $(C_wC^w)^k$, add other boundary components with labels $h_2$ to $h_n$, giving a term $\prod_{i=2}^n C_wh_iC^w$, and finally add the label $h_1$ to the initial boundary circle.
		
		Hence, we get:
		\begin{equation}
		\begin{split}
		P_{\S,W}(q) &= \tr \; (C_xC_yC^xC^y)^g(C_wC^w)^{k}\textstyle\prod_{i=2}^n(C_wh_iC^w)\; h_1\\
		&= \tr \;(\textstyle\sum_\l s_{\l}^{2g-1}Z_\l)(\textstyle\sum_\mu (\dim V_\mu)^ks_{\mu}^{k-1} Z_\mu)\textstyle\prod_{i=2}^n(\sum_{\l_i}\chi_{\l_i}(h_i)Z_{\l_i}) \; h_1 \\
		&= \sum_\l (\dim V_\l)^k s_\l^{2g-2+k+n}\textstyle\prod_{i=2}^n\chi_\l(h_i) \; \tr(Z_\l h_1) \\
		&= \sum_\l (\dim V_\l)^{k} s_\l^{2g-2+k+n}\textstyle\prod_{i=1}^n\chi_\l(h_i)~,
		\end{split}
		\end{equation}
		where we used \Cref{easyexpr}, \Cref{Prop:mult-center} and the equality $\tr(Z_\l h_1) = \chi_\l(h_1)$.
	\end{proof}
	
	Two remarks on the explicit expression:
	\begin{itemize}
	    \item A puncture is equivalent to a boundary component labeled by $1\in \He$, since $\dim(V_\l) = \chi_\l(1)$.
	    \item The compatibility of \Cref{Thm:explexpr-boundary} with the gluing property of \Cref{gluing-1} is ensured by the orthogonality of the irreducible characters (see \cite[Corollary 7.2.4]{geck2000characters}):
	    \begin{equation}
	        \chi_\l(C_w)\chi_\mu(C^w) = \delta_{\l,\mu}s_\l \dim(V_\l)~.
	    \end{equation}
	 \end{itemize}

	\subsection{Positivity properties}\label{Sec:final-positivity}
	We now turn to the positivity properties of our invariants. 
	For punctured surfaces this reduces to the study of positivity properties of the Schur elements. For a ciliated surface, the characters of elements in the Kazhdan--Lusztig basis appear.
	
	In \Cref{Appendix:Schur-positive} we carry out the study of positivity properties of Schur elements, using a formula of \cite{Maria-HDR}. We obtain the following theorem:
	
	\begin{thm}\label{Thm:positivity}
		The following positivity properties hold for punctures surfaces:
		\begin{enumerate}[label=(\roman*)]
			\item The polynomial invariant $P_{g,k,W}(q)$ has positive coefficients for all classical $W$ and for the exceptional types $H_3, E_6$ and $E_7$.
			\item For all other types, i.e. $I_2(m)$ for $m\geq 5$, $H_4, F_4$ and $E_8$, the invariants may have negative coefficients.
		\end{enumerate}
	\end{thm}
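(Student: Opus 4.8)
The plan is to deduce everything from the closed formula of \Cref{Thm:explexpr},
\begin{equation}
P_{g,k,W}(q) = \sum_\l (\dim V_\l)^k\, s_\l(q)^{N}, \qquad N := 2g-2+k,
\end{equation}
and to reduce positivity of the invariant to positivity of the Schur elements. First I would record that for a triangulable punctured surface the constraints on $(g,k)$ from \Cref{Sec:ciliated-surface} force $k \geq 1$ and $N = 2g-2+k \geq 1$, and that each $(\dim V_\l)^k$ is a positive integer. Since a product of Laurent polynomials with non-negative real coefficients again has non-negative real coefficients, and non-negativity is preserved under non-negatively weighted sums, the following clean sufficient condition emerges: if every Schur element $s_\l(q)$ has non-negative real coefficients, then $P_{g,k,W}(q)$ has non-negative coefficients for all admissible $(g,k)$. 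I work with real coefficients deliberately: for $W=H_3$ (and wherever the irreducibles are not rational) the individual $s_\l$ live in a cyclotomic extension, but Galois-conjugate characters share the same dimension, so the contributions of a Galois orbit combine into a rational Laurent polynomial. Non-negativity of the real coefficients of each $s_\l$ therefore already yields non-negativity of the (automatically rational) coefficients of $P_{g,k,W}$.

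For part (i) the task is thus purely representation-theoretic and is carried out in \Cref{Appendix:Schur-positive}: using Chlouveraki's product expression for Schur elements from \cite{Maria-HDR}, one checks type by type that for the classical series and for $H_3, E_6, E_7$ every Schur element has non-negative real coefficients (this is \Cref{Prop:schur-pos-sym}, which also records the symmetry coming from \Cref{Prop:duality-schur-elements}). In type $A$ this is transparent: up to a symmetric normalisation the Schur element of the partition $\l$ is the principal specialisation $[n]!_q / \prod_{x\in\l}[h(x)]_q$, a $q$-analogue of the number of standard Young tableaux and hence a polynomial in $q$ with non-negative integer coefficients. The remaining classical and exceptional cases require regrouping the cyclotomic factors of the product formula so that manifestly positive blocks (powers of $q$-integers) appear; this case analysis is the technical heart of the argument. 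Granting \Cref{Prop:schur-pos-sym}, part (i) is immediate from the reduction above.

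For part (ii) the key subtlety is that a single Schur element with a negative coefficient does not by itself force a negative coefficient in $P_{g,k,W}$, since the summation over $\l$ may restore positivity; one must exhibit a coefficient of $P$ in which a negative contribution genuinely survives. For the dihedral groups $I_2(m)$ I would argue uniformly: the two-dimensional irreducibles $\rho_j$ have Schur elements of the shape $s_{\rho_j}(q) = \kappa_j\,(q - c_j + q^{-1})$ with $c_j = 2\cos(2\pi j/m)$ and $\kappa_j = m/(2-c_j) > 0$, so $s_{\rho_j}$ carries the negative middle coefficient $-\kappa_j c_j$ exactly when $c_j > 0$, which happens for $j=1$ as soon as $m \geq 5$. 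Choosing $N=2$ (for instance $g=1,k=2$) and reading off the coefficient of $q^1$ in $P$, the one-dimensional representations contribute a fixed positive amount while the two-dimensional ones contribute $-2\cdot(\dim V_\l)^{k}\sum_j \kappa_j^2 c_j$; the Galois-invariant quantity $\sum_j \kappa_j^2 c_j$ is positive and grows without bound with $m$ (its $j=1$ term has denominator $(2-c_1)^2 \to 0$), so this coefficient is negative for every $m \geq 5$. For $H_4, F_4$ and $E_8$ it suffices, in the spirit of ``may have negative coefficients,'' to display one surface with a negative coefficient; these I would produce by a direct evaluation of the formula of \Cref{Thm:explexpr} using the Schur elements tabulated in CHEVIE, as explained in \Cref{Appendix:compute}.

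The main obstacle I expect is twofold. The genuinely hard, case-heavy step is the positivity of the Schur elements for the classical series and $H_3, E_6, E_7$ in part (i): there is no single manipulation that works across all types, and the exceptional computations rely on an explicit factorisation of each Schur element into positive blocks. The second, more conceptual, difficulty is the non-cancellation argument in part (ii): one must pin down, for each bad type, a specific monomial whose coefficient in $P_{g,k,W}$ is dominated by the contribution of the non-positive Schur elements, which for the family $I_2(m)$ is precisely what the unbounded growth of $\sum_j \kappa_j^2 c_j$ secures.
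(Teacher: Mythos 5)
Your reduction of part (i) to positivity of the Schur elements is exactly the paper's strategy for the classical types and for $E_6$, $E_7$ (that is \Cref{Thm:Schur-pos}, not \Cref{Prop:schur-pos-sym}, which only records symmetry and log-concavity). But your treatment of $H_3$ contains a genuine gap: it is simply false that every Schur element of $\He_{H_3}$ has non-negative real coefficients. The paper exhibits two Schur elements of the form $P$ and $q^5P$ attached to $3$-dimensional representations, where
\begin{equation}
P(q) = (2-2b)q+(6-6b)+(7-2b)q^{-1}-10aq^{-2}-10aq^{-3}+(7-2b)q^{-4}+(6-6b)q^{-5}+(2-2b)q^{-6}
\end{equation}
with $a=\cos(\tfrac{2\pi}{5})>0$, so the coefficients $-10a$ are genuinely negative real numbers. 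Your Galois-orbit remark does not repair this: the obstruction is not irrationality but actual negativity, and what must be shown is that after summing over the relevant representations the powers still combine positively. Concretely, the $H_3$ contribution requires proving that $P^l+Q^l$ has positive coefficients for every $l\geq 1$, where $Q$ is the Galois conjugate of $P$ (which happens to be positive); the paper does this by checking $P^l+Q^l$ directly for $l\leq 4$, verifying $P^5,\dots,P^9$ are positive by computation, and then running the induction $P^{l+5}=P^5P^l$. This is a separate, non-formal step that your argument omits entirely, and without it part (i) is not proved for $H_3$.

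Part (ii) is essentially sound in spirit and close to the paper's route, though you choose $N=2$ and the coefficient of $q^1$ where the paper uses the constant term of $P_{0,3,W}$ (i.e.\ $N=1$), falling back to $g=0$, $k=4$ only for $m=5$. Two loose ends remain in your version: you assert rather than prove that $\sum_j \kappa_j^2 c_j>0$ (indeed $>1/2$, after accounting for the extra one-dimensional Schur elements $l(q+2+q^{-1})$ in the even case) for \emph{all} $m\geq 5$ rather than just asymptotically, and for $H_4$, $F_4$, $E_8$ you rely, as the paper does, on explicit computation, which is acceptable. Also, a small slip in type $A$: the Schur element is (up to a power of $q$) the \emph{product} of quantum hook lengths $\prod[h_{i,j}^{\l}]_q$, not the quotient $[n]!_q/\prod[h]_q$; both are positive, so the conclusion stands.
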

	
	The first part (apart from $H_3$) is a direct consequence of the positivity of Schur elements, the second is a case-by-case study. We are thankful to Sebastian Manecke for his contribution to the case $H_3$.
	
	\begin{proof}
		By \Cref{Thm:Schur-pos}, the Schur elements have positive coefficients in all classical types and for $E_6$ and $E_7$. Hence \Cref{Thm:explexpr} implies the positivity of the coefficients of the invariants for punctured surfaces in those cases.
		
		Let us now study the case $H_3$. An explicit computation shows that there are only two Schur elements with negative coefficients which only differ by a shift by $q^5$. In other words, we can write these two elements as some Laurent polynomial $P$ and $q^5P$. The corresponding irreducible representations of $\He_{H_3}$ are $3$-dimensional. There are two other Schur elements whose representation is $3$-dimensional. These are of the form $Q$ and $q^5Q$ for some other Laurent polynomial $Q$. More explicitly let $a=\cos(\tfrac{2\pi}{5})$ and $b=\cos(\tfrac{4\pi}{5})$. Then:
		\begin{equation}
		\begin{split}
		P(q) =& \; (2-2b)q+(6-6b)+(7-2b)q^{-1}-10aq^{-2} \\
		& \; -10aq^{-3}+(7-2b)q^{-4}+(6-6b)q^{-5}+(2-2b)q^{-6} \\
		Q(q) =& \; (2-2a)q+(6-6a)+(7-2a)q^{-1}-10bq^{-2} \\
		& \; -10bq^{-3}+(7-2a)q^{-4}+(6-6a)q^{-5}+(2-2a)q^{-6}~. 
		\end{split}
		\end{equation}
		We are going to show that \emph{$P^l+Q^l$ has positive coefficients for all $l$}. This implies the positivity of the coefficients of the invariants corresponding to $H_3$ by the explicit formula of \Cref{Thm:explexpr}.
		
		We note that the coefficients of $Q$ are positive, and so are those of $Q^l$ for all $l \geq 1$.
		We prove by induction $l \mapsto l+5$ that $P^l$ has positive coefficients for $l\geq 5$. This is checked by direct computation for $P^5$ to $P^9$. Then $P^{l+5} = P^5P^l$ gives the induction heredity.
		For $l\in \{1,2,3,4\}$ one checks explicitly that $P^l+Q^l$ is positive.
		
		\medskip
		To prove the second part, we note by explicit computation that in types $G_2, F_4, H_4$ and $E_8$, the polynomial has negative coefficients for $g=0$ and $k=3$.
		
		Consider now type $I_2(m)$ with $m=2l+1$ odd. Then by \cite[Thm 8.3.4]{geck2000characters}, the Schur elements are given by 
		\begin{equation}
		\begin{split}
		s_0 & = 1+2q+2q^2+...+2q^{m-1}+q^m \\
		s_{l+1} & = 1+2q^{-1}+...+2q^{1-m}+q^{-m} \\
		s_j & = \frac{m}{2-2\cos(\frac{2\pi j}{m})}(q-2\cos(\tfrac{2\pi j}{m})+q^{-1}) ~~~ \mathrm{for} ~~~ 1\leq j\leq l 
		\end{split}
		\end{equation}
		where $s_0$ and $s_{l+1}$ correspond to 1-dimensional representations and all others to representations which are of dimension 2.
		Hence the constant term of $P_{0,3,W}(q)$ is given by
		\begin{equation}
		2-8m\sum_{j=1}^l \frac{\cos(\frac{2\pi j}{m})}{1-\cos(\frac{2\pi j}{m})} = 2-8m\left(-l+\frac{1}{2}\sum_{j=1}^l \frac{1}{\sin^2(\frac{\pi j}{m})}\right)~.
		\end{equation}
		For $l\geq 5$ we have $\frac{\pi^2}{m^2}<\frac{2}{5l}$ (by analysis of the roots of a quadratic polynomial in $l$). Using $\sin(x) \leq x$ we obtain:
		\begin{equation}
		-l+\frac{1}{2}\sum_{j=1}^l \frac{1}{\sin^2(\frac{\pi j}{m})} > -l+\frac{1}{2}\frac{1}{\sin^2(\frac{\pi}{m})} > \frac{l}{4}~.
		\end{equation}
		Thus, the constant term is smaller than $2-2ml$ which is strictly negative for $l\geq 5$. For $l=3$ and $l=4$, explicit computations also show negative constant terms. For $l=2$, one checks that for $g=0, k=4$ there are negative coefficients in the polynomial.
		
		Consider type $I_2(m)$ with $m=2l$ even. Then again by \cite{geck2000characters} Theorem 8.3.4, in addition to the Schur elements given above (for $1\leq j\leq l-1$ for the 2-dimensional representations), there are two others whose corresponding representation is 1-dimensional given by:
		\begin{equation}
		s_{\varepsilon_1}=s_{\varepsilon_2} = l(q+2+q^{-1})~.
		\end{equation}
		Hence the constant term of $P_{0,3,W}(q)$ is given by 
		\begin{equation}
		2+2m-8m\left(-(l-1)+\sum_{j=1}^{l-1} \frac{1}{2\sin^2(\frac{\pi j}{m})}\right)~.
		\end{equation}
		The same techniques as for $m$ odd apply and give that this constant term is negative for $l\geq 5$. For $l=3, 4$, explicit computations show that the constant term is still negative. This concludes for the cases $I_2(m)$ with $m\geq 5$.
	\end{proof}
	
	\begin{Remark}
		Recall that the invariants are traces of elements of the form
		\begin{equation}
		(C_wC^w)^{k-1}(C_xC_yC^xC^y)^g~.
		\end{equation} 
		These expressions are independent of the choice of basis $(C_w)$, and hence we may choose the Kazhdan--Lusztig basis $(b_w)$ to do the computations. Despite all the known positivity results concerning this very special basis of the Hecke algebra, we are unable to deduce from that the positivity of our invariants because of the appearance of the dual basis $(b^w)$.
		\Cref{Thm:positivity} can be seen as a family of positivity properties in the center of the Hecke algebra indexed by punctured surfaces.
	\end{Remark}
	
	\medskip
	Let us analyze the case of a general ciliated surface with boundary components labeled by elements of the Kazhdan--Lusztig basis (KL basis for short in the sequel).
	\begin{prop}\label{Prop:pos-characters}
		In type $A$, all irreducible characters evaluated at elements of the Kazhdan--Lusztig basis are positive.
	\end{prop}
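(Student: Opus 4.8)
The plan is to realize each irreducible representation as a Kazhdan--Lusztig left cell module and to read off the character directly as a sum of structure constants, whose positivity is already recorded in the Preliminaries. Recall that the left preorder $\leq_L$ on $W$ is defined by declaring $y \leq_L w$ whenever $b_y$ occurs with nonzero coefficient in some product $b_x b_w$; its equivalence classes are the left cells. The essential input, special to type $A$ and coming from Kazhdan--Lusztig together with the explicit matrix description of \cite{neunhoffer2006kazhdan} recalled above, is that every irreducible representation $V_\l$ of $\He$ is isomorphic to a left cell module $V_C$ for a suitable left cell $C$, and that these cell modules are already irreducible, so that no multiplicity or decomposition subtleties intervene.

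Given such a cell $C$, I would form the cell module $V_C = \He_{\leq_L C}/\He_{<_L C}$, where $\He_{\leq_L C}$ (respectively $\He_{<_L C}$) denotes the $\mathbb{Z}[v^{\pm 1}]$-span of the $b_y$ with $y \leq_L w$ for some $w \in C$ (respectively with $y <_L w$ for some $w\in C$ and $y\notin C$). Both are left ideals by transitivity of $\leq_L$, so the quotient is a left $\He$-module with basis the images $\{\bar b_w : w \in C\}$. Expanding in the Kazhdan--Lusztig basis, $b_x b_w = \sum_z \tensor{\mu}{_x_w^z} b_z$ with every $z \leq_L w$, and reducing modulo $\He_{<_L C}$ only the terms with $z \in C$ survive, so the action on $V_C$ reads
\begin{equation}
b_x \, \bar b_w = \sum_{z \in C} \tensor{\mu}{_x_w^z}\, \bar b_z~.
\end{equation}
Hence the matrix of $\l(b_x)$ in the basis $\{\bar b_w : w \in C\}$ has entries exactly the $\tensor{\mu}{_x_w^z}$, which lie in $\mathbb{Z}_{\geq 0}[v^{\pm 1}]$ by the positivity of the structure constants in the Kazhdan--Lusztig basis recalled in the Preliminaries.

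The character is the trace of this matrix, so
\begin{equation}
\chi_\l(b_x) = \sum_{w \in C} \tensor{\mu}{_x_w^w}~,
\end{equation}
a sum of elements of $\mathbb{Z}_{\geq 0}[v^{\pm 1}]$ and therefore positive. Since the trace is independent of the chosen basis and $V_C \cong V_\l$, this computes $\chi_\l(b_x)$ for every irreducible $V_\l$ and every $x \in W$, which proves the proposition. The one genuinely nontrivial ingredient, and precisely the point where type $A$ is indispensable, is the identification of irreducibles with single cell modules: in other types a left cell module may fail to be irreducible, and an irreducible character need not be expressible as a sum of diagonal cell structure constants, so this clean positivity argument no longer applies.
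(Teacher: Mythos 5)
Your argument is correct and is essentially the paper's proof: both identify each irreducible in type $A$ with a left cell module and express $\chi_\l(b_x)$ as the sum of diagonal Kazhdan--Lusztig structure constants $\sum_{w\in C}\tensor{\mu}{_x_w^w}$, whose positivity is the recalled Soergel-bimodule input. The only difference is that you derive this character formula directly from the cell-module construction, whereas the paper cites it (in the equivalent form $\chi_\l(b_w)=\sum_{x\in\Lambda}\tr b_wb_xb^x$) from Neunh\"offer.
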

	\begin{proof}
		Fix an irreducible representation with character $\chi_\l$. In type $A$, we know that this representation is a cell representation for some left cell $\Lambda$. Further by \cite[Eq. 4]{neunhoffer2006kazhdan} , we have
		\begin{equation}
		\chi_\l(b_w) = \sum_{x\in \Lambda} \tr b_wb_xb^x~.
		\end{equation}
		Since the structure constants in the KL basis are positive, all $\tr b_xb^xb_w$ are positive.
	\end{proof}

	Let $\He_{\geq 0}$ denote the set of elements in the Hecke algebra which have non-negative coordinates in the KL basis.
	
	\begin{coro}\label{Coro:A-positive}
		In type $A$, the invariant corresponding to a ciliated surface with boundary labels in $\He_{\geq 0}$, has positive coefficients.
	\end{coro}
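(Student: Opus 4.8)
The plan is to read off the result from the explicit expression of \Cref{Thm:explexpr-boundary},
\[
P_{\S,W}(q) = \sum_\l (\dim V_\l)^{k}\, s_\l(q)^{2g-2+k+n}\, \chi_\l(h_1)\cdots \chi_\l(h_n),
\]
by checking that, in type $A$ and for $h_1,\dots,h_n\in\He_{\geq 0}$, every factor occurring in each summand is a Laurent polynomial with non-negative coefficients (which is the meaning of \emph{positive} here). Since products and sums of such Laurent polynomials again have non-negative coefficients, positivity of $P_{\S,W}$ follows immediately once this is established.

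First I would treat the three kinds of factors separately. The integers $\dim V_\l$ are positive, so the $(\dim V_\l)^{k}$ are positive constants. By \Cref{Thm:Schur-pos} the Schur elements $s_\l(q)$ have non-negative coefficients in type $A$, and hence so does any non-negative power $s_\l(q)^{2g-2+k+n}$. For the character factors I would use \Cref{Prop:pos-characters}, which gives that each $\chi_\l(b_w)$ has non-negative coefficients; writing $h_i=\sum_w a_w b_w$ with $a_w\in\Z_{\geq 0}[v^{\pm 1}]$ --- which is precisely the condition $h_i\in\He_{\geq 0}$ --- and using linearity of $\chi_\l$, one gets $\chi_\l(h_i)=\sum_w a_w\,\chi_\l(b_w)\in\Z_{\geq 0}[v^{\pm 1}]$ as well. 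Combining the three observations shows that each summand, and therefore $P_{\S,W}$, has non-negative coefficients whenever the exponent $e:=2g-2+k+n$ is non-negative.

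The one delicate point --- and the main (if mild) obstacle --- is that $e$ can be negative, in which case $s_\l(q)^{e}$ is not a Laurent polynomial and the term-by-term argument fails. Inspecting the triangulability constraints of \Cref{Sec:ciliated-surface} shows that, once $n\geq 1$, one has $e\geq 0$ in every case except the disc with cilia, i.e.\ $g=0$, $k=0$, $n=1$, where $e=-1$. I would handle this last case directly instead of through the formula: the invariant of a polygon carrying the single boundary label $h_1$ equals $\tr(h_1)$, and since $\tr(b_e)=1$ while $\tr(b_w)=h_{e,w}$ is a non-negative Laurent polynomial for $w\neq e$ by positivity of the Kazhdan--Lusztig polynomials, one obtains $\tr(h_1)=\sum_w a_w\,\tr(b_w)\in\Z_{\geq 0}[v^{\pm 1}]$ for $h_1\in\He_{\geq 0}$. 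This disposes of the remaining case and completes the proof.
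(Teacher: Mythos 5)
Your argument follows essentially the same route as the paper's own (one-line) proof: read the positivity off the explicit expression of \Cref{Thm:explexpr-boundary}, using positivity of the type-$A$ Schur elements (\Cref{Thm:Schur-pos}), positivity of $\chi_\l(b_w)$ (\Cref{Prop:pos-characters}), and linearity of the characters --- though you should also invoke explicitly, as the paper does, that $\He_{\geq 0}$ is stable under products, since the $h_i$ appearing in the formula are products of the labels along each boundary component. Your separate treatment of the disc, where $2g-2+k+n=-1$ and the term-by-term argument via $s_\l(q)^{2g-2+k+n}$ breaks down, is correct (using $\tr(b_e)=1$ and $\tr(b_w)=h_{e,w}\in\mathbb{Z}_{\geq 0}[v]$ for $w\neq e$) and fills a gap that the paper's proof passes over in silence.
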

	\begin{proof}
		Let $\alpha$ be the product of the labels on the boundary following the latter according to the orientation induced by the surface. Since $\He_{\geq 0}$ is stable under product, we get $\alpha\in \He_{\geq 0}$. In type $A$ both the Schur elements and the characters $\chi_\l(b_w)$ are positive, and we conclude with the explicit expression of \Cref{Thm:explexpr-boundary}.
	\end{proof}
	
	For all other types the characters $\chi_\l(b_w)$ can have negative coefficients and hence the invariant can have negative coefficients as well. It is for example the case for $g=2, k=1, W=B_2$ and $\alpha=b_{rsr}$.
	
	Using the link between the standard trace and the expansion of an element in a given basis, we get:
	\begin{coro}
		In type $A$, any expression of the form $(C_wC^w)^{k}(C_xC_yC^xC^y)^{g}$ has positive coefficients in the dual Kazhdan--Lusztig basis $(b^w)$.
	\end{coro}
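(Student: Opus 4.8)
The plan is to extract the coordinates of the central element $s := (b_wb^w)^{k}(b_xb_yb^xb^y)^{g}$ (taking $(C_w)=(b_w)$ to be the Kazhdan--Lusztig basis, so that $(C^w)=(b^w)$) in the dual basis $(b^w)$ by means of trace duality, and then to recognize each such coordinate as an invariant already known to be positive. First I would use that, for any $z\in\He$ written as $z=\sum_{v}z_v\,b^v$, the defining relation $\tr(b_v b^{v'})=\delta_{v,v'}$ of the trace-dual basis gives $z_v=\tr(b_v z)$. Thus the coefficient of $b^v$ in $s$ is exactly $\tr(b_v s)$.

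Next I would compute $\tr(b_v s)$ explicitly. By \Cref{Prop:expr-in-center} one has $C_wC^w=\sum_\l (\dim V_\l)Z_\l$ and $C_xC_yC^xC^y=\sum_\l s_\l Z_\l$, and iterating the multiplication rule $Z_\l Z_\mu=\delta_{\l,\mu}s_\l Z_\l$ of \Cref{Prop:mult-center} yields
\begin{equation}
s=\sum_\l (\dim V_\l)^{k}\,s_\l^{\,2g+k-1}\,Z_\l.
\end{equation}
Pairing with $b_v$ and using $\tr(Z_\l h)=\chi_\l(h)$ gives
\begin{equation}
\tr(b_v s)=\sum_\l (\dim V_\l)^{k}\,s_\l^{\,2g+k-1}\,\chi_\l(b_v).
\end{equation}
This is precisely the expression of \Cref{Thm:explexpr-boundary} for the ciliated surface of genus $g$ with $k$ punctures and a single boundary component carrying the label $b_v$; equivalently, $\tr(b_v s)=P_{\S,W}$ for that surface with boundary datum $b_v\in\He_{\geq 0}$.

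The positivity then follows from facts established earlier. In type $A$ the Schur elements $s_\l$ have positive coefficients (\Cref{Thm:Schur-pos}), so $s_\l^{\,2g+k-1}$ is a positive polynomial as soon as the exponent is non-negative; the factors $(\dim V_\l)^{k}$ are non-negative integers; and by \Cref{Prop:pos-characters} each $\chi_\l(b_v)$ is positive. Hence every summand, and therefore $\tr(b_v s)$, has positive coefficients. Since this holds for every $v\in W$, all coordinates of $s$ in the dual KL basis are positive. Alternatively, and more conceptually, this is just \Cref{Coro:A-positive} applied coordinate by coordinate to the one-boundary surface above.

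The only genuine point of care, and the step I expect to be the main obstacle, is the sign of the exponent $2g+k-1$: term by term, $s_\l^{\,2g+k-1}$ is a bona fide positive polynomial only when $2g+k\geq 1$, which covers every admissible ciliated surface (the excluded range being exactly the one ruled out by the triangulability hypotheses of \Cref{Sec:ciliated-surface}). The sole degenerate case $g=k=0$ gives $s=1$, whose $b^v$-coordinate $\tr(b_v)$ is the $h_e$-coefficient of $b_v$, i.e. a value of a Kazhdan--Lusztig polynomial, hence positive; so the statement holds there as well. No further difficulty arises, since all the positivity inputs, namely Schur positivity in type $A$ and positivity of irreducible characters on the Kazhdan--Lusztig basis, are already in hand.
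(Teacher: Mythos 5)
Your proposal is correct and follows essentially the same route as the paper: the paper's proof is the one-line observation that the coefficient along $b^v$ is $\tr(b_v s)$, which is the invariant of the surface with one boundary component labeled $b_v$ and hence positive by \Cref{Coro:A-positive}. Your extra unwinding via \Cref{Thm:explexpr-boundary} and your check of the degenerate exponents are sound but not a different argument.
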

	\begin{proof}
		The coefficient of $g = (C_wC^w)^{h}(C_xC_yC^xC^y)^{g}$ along $b^w$ when expressed in the KL basis is $\tr b_w g$ which is positive by the previous corollary.
	\end{proof}
	
	\begin{example}
		For $W = \mathfrak{S}_2$, the Hecke algebra is commutative and the invariant is given by $\tr ((C_1C^1+C_sC^s)^m)$ in any basis $(C_1,C_s)$. The dual KL basis is $(b^1, b^s)=(h_1-q^{-1/2}h_s, h_s)$. Using induction, one easily checks that
		\begin{equation}
		\tr ((b_1b^1+b_sb^s)^m) = ((1+q)^{m-1}+(1+q^{-1})^{m-1})b^1 + q^{(m-1)/2}(q^{1/2}+q^{-1/2})^mb^s~.
		\end{equation}
		Each coefficient is indeed positive. Note that the first coefficient is $P_{0,m+1,\mathfrak{S}_2}(q)$.
	\end{example}

	If we use labels with positive coefficients in the basis adapted to the Wedderburn decomposition, we get:
	\begin{prop}
		Consider a ciliated surface with labeled boundary, such that the product of the labels on each boundary component has positive coefficients in the Wedderburn-adapted basis. Then the polynomial invariant is positive.
	\end{prop}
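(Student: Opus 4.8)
The plan is to read positivity directly off the closed formula of \Cref{Thm:explexpr-boundary},
$$P_{\S,W}(q) = \sum_\l (\dim V_\l)^{k}\, s_\l(q)^{2g-2+k+n}\,\chi_\l(h_1)\cdots\chi_\l(h_n),$$
and to show that, under the hypothesis, every factor of every summand is a positive Laurent polynomial. The dimensions $(\dim V_\l)^k$ are positive integers, so the whole content lies in controlling the two remaining kinds of factor: the irreducible characters $\chi_\l(h_i)$ and the powers of the Schur elements $s_\l$.

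The key step, and the reason for passing to the Wedderburn-adapted basis, is to establish character positivity in \emph{all} types; this is exactly where that basis improves on the Kazhdan--Lusztig basis, which only does the job in type $A$ (\Cref{Prop:pos-characters}). Writing $\He\cong\bigoplus_\mu\End(V_\mu)$ and letting $(B_x)$ be the basis given by the matrix coefficients of this decomposition, I would note that $\chi_\l$ evaluated on a basis vector $B_x$ lying in the block $\mu$ is the matrix trace of the corresponding elementary matrix, hence equals $1$ on a diagonal coefficient of the block $\mu=\l$ and $0$ in every other case; in particular $\chi_\l(B_x)\in\{0,1\}$. Consequently, if the product $h_i$ of the labels along the $i$-th boundary component has non-negative coordinates $a_x\ge 0$ in this basis, then $\chi_\l(h_i)=\sum_x a_x\,\chi_\l(B_x)$ is a non-negative combination of those coordinates, and is therefore a positive Laurent polynomial for every $\l$. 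This is the analogue of \Cref{Prop:pos-characters}, now valid for an arbitrary finite Coxeter system.

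With character positivity in hand, only the powers $s_\l(q)^{2g-2+k+n}$ remain. I would close the argument by invoking the positivity of Schur elements from \Cref{Thm:Schur-pos} and \Cref{Thm:positivity}: whenever $s_\l$ has positive coefficients (all classical types and $H_3,E_6,E_7$), each summand is a product of positive Laurent polynomials, so the total is positive, exactly as in the proof of \Cref{Coro:A-positive} but without the type-$A$ restriction on the labels.

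The hard part will be the interaction with the Schur elements: character positivity by itself does \emph{not} force positivity of the invariant, since in the remaining types ($I_2(m)$ with $m\ge 5$, $H_4$, $F_4$, $E_8$) the factor $s_\l^{2g-2+k+n}$ may already carry negative coefficients, and this factor never disappears because $2g-2+k+n\ge 1$ on any triangulable ciliated surface. The delicate point is therefore to identify precisely for which labels the summands combine to a positive polynomial: the genuine content of the statement is that Wedderburn-positive labels upgrade character positivity to \emph{every} type, which then yields positivity in all types where the Schur elements are themselves positive. Handling the remaining types would require either working with an integral Wedderburn-adapted basis in which the offending single-block labels fail to be positive, or a cross-block cancellation argument in the spirit of the $P^l+Q^l$ trick used for $H_3$ in the proof of \Cref{Thm:positivity}.
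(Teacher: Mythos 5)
Your proposal follows the same route as the paper: the paper's entire proof is the single sentence that the claim ``follows from the explicit expression in \Cref{Thm:explexpr-boundary} and the fact that the characters evaluated at the Wedderburn adapted basis are non-negative.'' Your second paragraph supplies exactly the detail the paper leaves implicit: in the block decomposition $\He\cong\bigoplus_\mu\End(V_\mu)$ the basis $(B_x)$ of matrix coefficients satisfies $\chi_\l(B_x)\in\{0,1\}$, so any $h_i$ with non-negative coordinates in this basis has $\chi_\l(h_i)\ge 0$ for every $\l$, in every type. That is precisely the intended gain over \Cref{Prop:pos-characters}, which achieves character positivity only in type $A$ via the Kazhdan--Lusztig basis.

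Your reservation about the Schur-element factor is correct and points at a real imprecision that the paper's one-line proof glosses over rather than at a defect in your reasoning. The exponent $2g-2+k+n$ is at least $1$ for any triangulable ciliated surface with boundary, and taking every label equal to $1\in\He$ (which has coordinates $0$ and $1$ in the Wedderburn-adapted basis, hence satisfies the hypothesis) reduces \Cref{Thm:explexpr-boundary} to the punctured-surface formula of \Cref{Thm:explexpr}; by part (ii) of \Cref{Thm:positivity} this already produces negative coefficients in types $I_2(m)$ for $m\ge 5$, $F_4$, $H_4$ and $E_8$. So the proposition must implicitly be read as conditional on positivity of the Schur elements (\Cref{Thm:Schur-pos}), i.e.\ restricted to the classical types and $E_6$, $E_7$; you are also right that $H_3$ is not automatic, since the $P^l+Q^l$ cancellation used in \Cref{Thm:positivity} can be destroyed once the two families of three-dimensional representations acquire different character factors $\chi_\l(h_i)$. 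In short, your argument establishes everything the paper's proof establishes, and does so more carefully.
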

	\begin{proof}
		This follows from the explicit expression in \Cref{Thm:explexpr-boundary} and the fact that the characters evaluated at the Wedderburn adapted basis are non-negative.
	\end{proof}

	\section{Perspectives}
	
	Our construction can be generalized in different seemingly promising ways, that we list and comment briefly.
	
	\paragraph{Generalization to other symmetric algebras.}	
	Our construction of a $2$-dimensional quantum field theory for which cobordisms are ciliated surfaces seems to work for any symmetric finitely generated algebra over \textit{nice} rings, for example for cyclotomic Hecke algebras and Yokonuma--Hecke algebras \cite{Maria-HDR}.  We believe that the explicit expressions of \ref{Thm:explexpr} and \ref{Thm:explexpr-boundary} stay true, but properties like the invariance under $q\mapsto q^{-1}$, positivity or the interpretation via counting of higher laminations might not hold in general. 
	
	\paragraph{Generalization to affine Hecke algebras.}	
	As already underlined in the introduction and in \Cref{Rem:higherlaminations}, our original motivation for the present work was to study the space of functions over character varieties and its canonical basis: higher laminations. The Satake correspondence is expected to play a role in this story. Since it identifies the spherical affine Hecke algebra corresponding to an algebraic reductive Lie group $G$ with the space of representations of the Langlands dual $G^\vee$, the generalization of the TQFTs constructed above to spherical affine Hecke algebras should be related to the corresponding character varieties. Since those symmetric algebras are not finitely generated anymore, the construction of the TQFTs must involve some regularization of the infinite sums which then appear in the gluing process. Higher laminations, which would generalize finite higher laminations to this new setup at least in the case of affine Hecke algebras, would be very similar to the spectral networks of \cite{gaiotto2013wall}, calling for a physical interpretation of our construction and possibly an understanding in terms of BPS states counting in $4d$ $\mathcal{N}=2$ theories of class $S$.
	
	\paragraph{Categorification.}	
	Hecke algebras are categorified by Soergel bimodules \cite{soergel2007kazhdan}. Can our construction also be expressed in term of these bimodules? Whereas it was tempting to look for such a categorified version of our TQFTs to explain the positivity properties of the invariants we observed at the very beginning of our study, we have seen that the positivity does not hold in all cases. We still expect a possible categorification - maybe only for type $A$ - which would explain the positivity properties of the Schur elements of the Hecke algebras as described in \Cref{Appendix:Schur-positive}, or the positivity properties of the invariants corresponsing to punctured surfaces.

	\appendix
	
	\section{Computation using Sage and CHEVIE}\label{Appendix:compute}
	In this appendix, we describe two ways to compute our invariants by computer, the first using Sage \cite{sage} and the second using CHEVIE, a package of Gap3 (see \cite{michel2015development} and \cite{geck1996chevie}).
	
	\subsection{Sage}
	We recommend the online platform \href{https://sagecell.sagemath.org/}{SageMathCell} where you can perform computations using Sage without any installation.
	To compute the polynomial invariant, we use Equation \eqref{easyexpr}. For example, a possible code for computing the polynomial $P_{1,3,A_3}$ is:
	
	\begin{verbatim}
	k = 3
	g = 1
	R.<v> = LaurentPolynomialRing(QQ)
	H = IwahoriHeckeAlgebra('A3', v, -1/v)
	W = H.coxeter_group()
	T = H.T(); 
	S = (sum(T(i)*T(i.inverse()) for i in W))**(k-1)*(sum
	(sum(T(i)*T(j)*T(i.inverse())*T(j.inverse()) for j in W)
	for i in W))**g 
	\end{verbatim}
	
	Using the parameters $v$ and $-1/v$ for H corresponds to the normalized version of the Hecke algebra in which $(h_s+v)(h_s-1/v)=0$. An H.T(w) for w in the Coxeter group is the standard basis element of H corresponding to w, while H.Cp(w) is the element in the KL basis associated with w.
	
	This code yields the sum $(h_wh^w)^{k-1}(h_xh_yh^xh^y)^{g}$. However we are interested in its trace, hence the $P_{g,k,W}$ is the coefficient of $1$ in the result, in which one can replace $v^{-2}$ by $q$.
	Here are some examples of computations:
	
	\begin{itemize}
		\item $P_{0,3,\mf{S}_3}(q) = q^3+2q^2+10q+10+10q^{-1}+2q^{-2}+q^{-3}.$
		\item $P_{1,1,\mf{S}_3}(q) = q^3+2q^2+4q+4+4q^{-1}+2q^{-2}+q^{-3}.$
		\item $P_{0,4,\mf{S}_3}(q) = q^6+4q^5+8q^4+10q^3+24q^2+36q+50+...$
		\item $P_{0,3,\mf{S}_4}(q) = q^6+ 3q^5+5q^4+33q^3+67q^2+108q+142+...$
		\item $P_{0,3,G_2}(q) = q^6 + 2q^5 + 2q^4 + 2q^3 + 2q^2 + 72q-18+...$
	\end{itemize}

	For a surface with one boundary component labeled by $h$ one needs to multiply the big sum by $h$, and expand the result in the standard basis (to read of the constant term).
	Here is an example for type $A_3$ and $h$ an element in the KL-basis:
	\begin{verbatim}
	Cp = H.Cp()
	r,s,t = W.simple_reflections()
	T(Cp(r*s*t*r)*S)
	\end{verbatim}

	\subsection{CHEVIE}
	CHEVIE is a package of Gap3 (not included in Gap4). We recommend the installation from the \href{https://webusers.imj-prg.fr/~jean.michel/gap3/}{webpage of Jean Michel}.
	The advantage of CHEVIE is that it knows the Schur elements and characters. So we can use the explicit expression from \Cref{Thm:explexpr} to compute our polynomial.
	
	Here is a code computing $P_{0,3,E_8}$:
	\begin{verbatim}
	g:=0;;
	k:=3;;
	W:= CoxeterGroup("E",8);;
	v:=X(Cyclotomics);; v.name:="v";;
	H:=Hecke(W,[[v,-v^-1]]);;
	T:=Basis(H,"T");;
	Cp:=Basis(H,"C'");;
	schur:=SchurElements(H);;
	list:=[1..Length(schur)];;
	dim:=HeckeCharValues(T());;
	Sum(list,i->dim[i]^k*schur[i]^(2*g-2+k));
	\end{verbatim}
	
	Using the explicit formula from \Cref{Thm:explexpr}, we get the following general formula for type $A_2$ where $m=2g-2+k$:
	$$P_{g,k,\mf{S}_3}(q) = (1+2q+2q^2+q^3)^m+(1+2q^{-1}+2q^{-2}+q^{-3})^m+2^k(q+1+q^{-1})^m.$$
	For type $G_2$:
	\begin{align*}
	P_{g,k,G_2}(q) =& \; (1+2q+...+2q^5+q^6)^{m}+(1+2q^{-1}+...+2q^{-5}+q^{-6})^{m} \\
	& \; +2(3q^{-1}+6+3q)^{m} + 2^k(6q-6+6q^{-1})^m+2^k(2q+2+2q^{-1})^m.
	\end{align*}
	
	For a surface with one boundary component, we can compute the polynomial using \Cref{Thm:explexpr-boundary}: one only need to change the last row of the code above to: 
	
	\begin{verbatim}
	h:=Cp(1);;
	Sum(list,i->dim[i]^(k-1)*schur[i]^(2*g-2+k)*HeckeCharValues(h)[i]);
	\end{verbatim}
	
	One can of course change the value of $h$ at will.

	\section{Positivity for Schur elements}\label{Appendix:Schur-positive}
	We study in detail the coefficients of Schur elements associated to a Iwahori--Hecke algebra and their positivity. The main tool is an explicit formula for the Schur elements using generalized hook lengths from \cite{Maria-HDR} (see in particular Example 2.5 for more references).
	
	\begin{thm}\label{Thm:Schur-pos}
		The Schur elements $s_\l(q)$ have positive coefficients for all Coxeter groups of classical type and for the exceptional types $E_6$ and $E_7$.
	\end{thm}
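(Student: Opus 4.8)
The plan is to deduce positivity from the explicit hook-length formula for Schur elements due to Chlouveraki (the formula of \cite{Maria-HDR} invoked above), treating the classical types uniformly since $\mf{S}_n$ and the Weyl groups of type $B_n/C_n$ and $D_n$ are the complex reflection groups $G(1,1,n)$, $G(2,1,n)$ and $G(2,2,n)$, and handling $E_6, E_7$ by direct inspection. I would start with type $A$, where specializing the hook formula to a single parameter gives, up to a monomial, a product of $q$-integers over the cells of the Young diagram of $\l \vdash n$:
\[ s_\l(q) = q^{-n(\l)}\prod_{x\in\l}[h(x)]_q, \qquad [m]_q := 1+q+\cdots+q^{m-1}, \]
where $h(x)$ is the hook length of $x$ and $n(\l)=\sum_i (i-1)\l_i$. (One checks this against $s_{(n)}=\prod_{i=1}^n[i]_q$, the Poincaré polynomial, and against the duality $s_{\l'}(q)=s_\l(q^{-1})$ of \Cref{Prop:duality-schur-elements}.) Since each $[h(x)]_q$ has non-negative coefficients and the prefactor only shifts exponents, positivity in type $A$ is immediate.

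For types $B$, $C$, $D$ I would invoke the generalized hook-length formula, which expresses $s_\l$ through the \emph{symbol} attached to the (bi)partition, specialized to the equal-parameter case. It presents $s_\l$ as a monomial times a product of factors indexed by the cells or hooks of the symbol, each factor being a $q$-integer $[m]_q$ or a binomial-type polynomial $q^a+1$. The work is combinatorial bookkeeping: regroup the numerator and denominator factors so that every surviving factor is one of these manifestly positive polynomials, and track the monomial normalisation; the degenerate symbols in type $D$ (where the two partitions coincide and the representation splits) must be treated separately. The key point that keeps positivity valid \emph{for all ranks} is to never amalgamate these factors into high-index cyclotomic polynomials $\Phi_d$, which would eventually acquire negative coefficients — keeping them as $q$-integers and $q^a+1$ factors keeps positivity visible.

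For $E_6$ and $E_7$ there is no uniform combinatorial formula, so the claim is verified by explicit computation: the Schur elements of the equal-parameter Hecke algebras of type $E_6$ (with its $25$ irreducible representations) and type $E_7$ (with $60$) are tabulated, e.g. via CHEVIE as in \Cref{Appendix:compute}, and one checks directly that every coefficient is non-negative. This is a finite, if lengthy, inspection, and there is no structural reason behind it — it is the accident that distinguishes $E_7$ from $E_8$.

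I expect the main obstacle to be the $B/C/D$ regrouping: unlike type $A$, the symbol formula is a genuine ratio of products of differences, and one must show that after cancellation the result is a product of positive polynomials with no negative cross-terms, \emph{uniformly} in $n$. The delicacy is underscored by the fact that positivity fails just outside our list — for $I_2(m)$ the Schur elements contain factors $q-2\cos(2\pi j/m)+q^{-1}$ with negative middle coefficient, and in $F_4, H_4, E_8$ negative coefficients arise by direct computation — so the argument for the classical types must genuinely exploit their crystallographic (rational) structure rather than any formal positivity of the hook formula.
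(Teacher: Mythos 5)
Your proposal is correct and follows essentially the same route as the paper: Chlouveraki's generalized hook-length product formula for the classical types (type $A$ as a product of quantized hook lengths, types $B$ and $D$ via the $l=2$ multipartition formula with the appropriate parameters, the case $\l=\mu$ in type $D$ treated separately), together with direct machine verification for $E_6$ and $E_7$. The only difference is that you anticipate a delicate numerator/denominator cancellation in types $B/C/D$, whereas the formula the paper quotes is already a product of manifestly positive factors ($q$-integers and terms $q^a+1$, up to a monomial shift and sign), so no regrouping is actually needed.
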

	\begin{proof}
		For the exceptional types $E_6$ and $E_7$ an explicit computation using CHEVIE shows the positivity of the Schur elements, while in all other exceptional types there are Schur elements with negative coefficients.
		
		For the classical types we use the explicit formula from Theorem 4.3 in \cite{Maria-HDR} which was first published in \cite{chlouveraki2012schur}.
		We need to introduce some notations to state the formula. For classical types, an irreducible representation $\l$ is described by a set of Young diagrams $(\l^{(0)},...,\l^{(l-1)})$ also called a multipartition (for Weyl groups we have $l=1$ or $l=2$). The \textit{generalized hook length} at $(i,j)\in \l$ with respect to two Young diagrams $\l$ and $\mu$ is
		\begin{equation}
		h_{i,j}^{\l,\mu} = \l_i-i+\mu_j'-j+1~,
		\end{equation}
		where $\l_i$ denotes the length of the $i$-th row in $\l$ and $\mu'$ is the conjugated Young diagram. For $\l=\mu$ this gives the usual hook length.
		
		We can now state the explicit formula. The Schur element indexed by a multipartition $\l=(\l^{(0)},...,\l^{(l-1)})$ of the integer $n$ is given by
		\begin{equation}
		s_\l(q) = (-1)^{n(l-1)}q^{-N(\bar{\l})} \prod_{0\leq s\leq l-1}\prod_{(i,j)\in\l^{(s)}}\left([h_{i,j}^{\l^{(s)},\l^{(s)}}]_q \prod_{0\leq t\leq l-1, t\neq s}(q^{h_{i,j}^{\l^{(s)},\l^{(t)}}}Q_sQ_t^{-1}-1) \right)~.
		\end{equation}
		
		Note that we have not introduced the notation $N(\bar{\l})$ because we can ignore this part of the formula for our purposes.
		
		Type $A$ corresponds to $l=1$ hence the formula simplifies to:
		\begin{equation}\label{schur-an}
		s_\l(q) = q^{-N(\bar{\l})} \prod_{i,j \in \l}[h_{i,j}^{\l}]_q~.
		\end{equation}
		The Schur element is thus a product of \textit{quantized} hook lengths (modulo a shift). These quantum integers are all positive, and hence is the Schur element $s_\l(q)$.
		
		Type $B$ corresponds to $l=2, Q_0=q$ and $Q_1=-1$. An irreducible representation is parameterized by a pair of Young diagrams $(\l, \mu)$, which yields:
		\begin{equation}
		s_{\l,\mu}(q) = q^{-N(\l\cup\mu)} \prod_{(i,j)\in\l}[h_{i,j}^{\l,\l}]_q (q^{2+\l_i-i+\mu_j'-j}+1)\times\prod_{(i,j)\in\mu}[h_{i,j}^{\mu,\mu}]_q(q^{\mu_i-i+\l_j'-j}+1)~.
		\end{equation}
		Since the quantum integers are positive, the Schur element again has positive coefficients.
		
		To obtain the results for type $D$, we have to use a link to type $B$ which is established in \cite[Part 2.3]{chlouveraki2009blocks} using Clifford theory. The result is that we can reduce type $D$ to $l=2$ with parameters $Q_0=q$ and $Q_1=-q$.
		An irreducible representation is given by an unordered pair of Young diagrams $(\l,\mu)$. If $\l\neq \mu$ we get:
		\begin{equation}
		s_{\l,\mu}(q) = \frac{1}{2} q^{-N(\l\cup\mu)} \prod_{(i,j)\in\l}[h_{i,j}^{\l,\l}]_q (q^{\l_i-i+\mu_j'-j+1}+1)\times\prod_{(i,j)\in\mu}[h_{i,j}^{\mu,\mu}]_q(q^{\mu_i-i+\l_j'-j+1}+1)~.
		\end{equation}
		
		For $\l=\mu$ we get
		\begin{equation}
		s_{\l,\l}(q) = q^{-N(\l\cup\l)} \prod_{(i,j)\in\l}[h_{i,j}^{\l,\l}]_q^2 (q^{\l_i-i+\mu_j'-j+1}+1)^2~.
		\end{equation}
		
		In both cases the expressions show the positivity of the Schur elements.
	\end{proof}

	Apart from the positivity, the coefficients of the Schur elements for Coxeter groups satisfy other interesting properties. Call a sequence of integers $(a_1, a_2, ..., a_n)$ \textit{symmetric} if $a_i = a_{n-i}$ and call it \textit{log-concave} if $a_i^2 \geq a_{i-1}a_{i+1}$ (taking the logarithm gives precisely the condition for a concave function).
	
	\begin{prop}\label{Prop:schur-pos-sym}
		For any Coxeter group, the coefficients of the Schur elements are symmetric.
		In type $A_n$, the coefficients are in addition log-concave.
	\end{prop}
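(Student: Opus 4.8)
The plan is to read off both properties directly from the factorized forms of the Schur elements recorded in \Cref{Thm:Schur-pos} and its proof. The elementary observation driving the symmetry statement is that a Laurent polynomial $f(q)=\sum_i a_i q^i$ has a symmetric coefficient sequence precisely when it is \emph{self-reciprocal}, i.e.\ $f(q^{-1})=q^{-N}f(q)$ for some $N\in\Z$, and that the class of self-reciprocal Laurent polynomials is closed under multiplication and under multiplication by monomials $q^m$. It therefore suffices to verify that every factor occurring in the product expression for $s_\l(q)$ is self-reciprocal.

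For the symmetry I would exploit that all the factors occurring in the Schur elements have, up to a monomial, their roots on the unit circle and closed under $z\mapsto z^{-1}$, which forces self-reciprocity. Concretely, a quantum integer obeys $[h]_{q^{-1}}=q^{-(h-1)}[h]_q$, a binomial factor obeys $q^{-a}+1=q^{-a}(q^a+1)$, and the dihedral factors $q-2\cos\theta+q^{-1}$ are manifestly invariant under $q\mapsto q^{-1}$; the global prefactor is a monomial. This settles all classical types and type $I_2(m)$ directly from \Cref{Thm:Schur-pos} and the dihedral formulas used in \Cref{Thm:positivity}. For the finitely many remaining exceptional types the same conclusion follows either from an explicit CHEVIE computation or from the general factorization of an equal-parameter Schur element as a constant times a monomial times a product of generalized cyclotomic polynomials, each of which has its roots on the unit circle in inverse pairs and is therefore self-reciprocal; note in particular that a factor $\Phi_1=q-1$ cannot occur, since $s_\l(1)=|W|/\dim V_\l\neq 0$. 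In every case $s_\l$ is self-reciprocal, which is exactly the asserted symmetry.

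For log-concavity in type $A$, I would use the hook-length formula \Cref{schur-an}: up to the monomial $q^{-N(\bar\l)}$, which only reindexes the coefficient sequence and hence affects neither log-concavity nor the absence of internal zeros, $s_\l(q)$ is a product of quantum integers $[h^{\l}_{i,j}]_q=1+q+\dots+q^{h-1}$. Each such factor has coefficient sequence $(1,1,\dots,1)$, which is log-concave and has no internal zeros. The main tool is then the classical fact that the convolution of two nonnegative sequences that are log-concave with no internal zeros is again log-concave with no internal zeros; applying this inductively across the product of quantum integers yields that the coefficient sequence of $s_\l$ is log-concave.

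The hard part is less the symmetry than pinning down where log-concavity can and cannot survive. The no-internal-zeros hypothesis is indispensable in the convolution lemma, and it is precisely the all-ones coefficient sequences of the type $A$ quantum integers that supply it; I expect the argument to fail in types $B$ and $D$, whose formulas contain binomial factors $q^a+1$ with internal zeros, so that log-concavity is genuinely special to type $A$. The remaining soft spot is the exceptional-type symmetry, for which the excerpt provides no closed product formula: a fully uniform, computation-free proof of self-reciprocity would require the general cyclotomic factorization of equal-parameter Schur elements rather than the type-by-type verification sketched above.
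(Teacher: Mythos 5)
Your proof is correct and follows essentially the same route as the paper: symmetry via the factorization of the Schur element into a monomial prefactor times (generalized) cyclotomic, hence self-reciprocal, factors, and log-concavity in type $A$ via the hook-length product of quantum integers combined with the fact that convolution of positive log-concave sequences preserves log-concavity (Stanley's result, which the paper cites). Your extra observation that $\Phi_1=q-1$ cannot occur because $s_\l(1)=|W|/\dim V_\l\neq 0$ is a worthwhile detail that the paper's blanket appeal to ``cyclotomic polynomials are symmetric'' silently skips.
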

	The proof of the first part relies on a general formula for Schur elements in terms of cyclotomic polynomials which are symmetric.
	The second part relies on Chlouveraki--Jacon's formula and the observation that log-concavity is multiplicative.
	\begin{proof}
		In \cite{Maria-HDR}, Formula (2.2) gives the following expression of Schur elements:
		\begin{equation}
		s_\l = \xi_\l q^{-a_\l}\prod_{\Phi\in \Cyc_\l}\Phi(q^{n_{\l,\Phi}})~,
		\end{equation}
		where $\xi_\l \in \R$, $n_{\l,\Phi}\in\Z_{>0}$ and $\Cyc_\l$ is a set of cyclotomic polynomials. This formula comes from a case-by-case study.
		
		Since we are interested in the symmetry of the sequence of coefficients, we can ignore the prefactor $\xi_\l q^{-a_\l}$. Furthermore, cyclotomic polynomials are known to be symmetric. Finally a product of symmetric polynomials is still symmetric. This proves the first part.
		
		In type $A_n$, we have seen above in Equation \eqref{schur-an} that 
		\begin{equation}
		s_\l(q) = q^{-N(\bar{\l})} \prod_{i,j \in \l}[h_{i,j}^{\l}]_q~.
		\end{equation}
		The sequence of coefficients in a quantum integer is $(1,1,...,1)$ which is positive and log-concave. By Proposition 2 in \cite{stanley1989log}, the product of positive log-concave sequences preserves these properties, thus the coefficients of the Schur elements are log-concave.
	\end{proof}

	\bibliographystyle{alpha}
	\bibliography{ref}
	%\nocite{*}
	
\end{document}